\documentclass[11pt]{amsart}
\usepackage{amscd}
\usepackage{amsmath}
\usepackage{amsfonts}
\usepackage{amssymb}
\usepackage{graphicx}
\usepackage{enumerate, amstext}

\textwidth 156 true mm \textheight 230 true mm \voffset -8 true mm
\hoffset -16 true mm

\newtheorem{thm}{Theorem}[section]
\newtheorem{cor}[thm]{Corollary}
\newtheorem{lem}[thm]{Lemma}

\theoremstyle{definition}
\newtheorem{defn}[thm]{Definition}

\theoremstyle{remark}

\numberwithin{equation}{section}


%

\def\diag{{\rm diag}\,}
\def\cA{{\mathcal A}}
\def\cB{{\mathcal B}}
\def\cI{{\mathcal I}}

\def\cS{{\mathcal S}}
\def\IC{{\mathbb C}}

\def\t{{\theta}}
\def\tr{{\rm tr}\,}

\def\cV{{\mathcal V}}

\openup 1\jot
\begin{document}
\title[Maps preserving the spectrum]
{Maps preserving the spectrum of generalized Jordan product
      of operators}

\author{Jinchuan Hou, Chi-Kwong Li, and Ngai-Ching Wong}
\address[Hou]{Department of Mathematics, Taiyuan University of Technology,
Taiyuan, 030024, P. R. of China} \email{houjinchuan@tyut.edu.cn}
\address[Li] {Department of Mathematics, The College of William
\& Mary, Williamsburg, VA 13185, USA. {\rm Li is an honorary
professor of the University of Hong Kong}} \email{ckli@math.wm.edu}

\address[Wong]{Department of Applied Mathematics, National Sun Yat-sen
  University, Kaohsiung, 80424,
Taiwan. } \email{wong@math.nsysu.edu.tw}

\date{October 6, 2009; a revision submitted to LAA}

\thanks{{\it 2002 Mathematical Subject Classification.}
Primary 47B49; 47A12}
\thanks{{\it Key words and phrases.} Standard operator algebra, spectral
functions, Jordan triple-products of operators, skew products of
operators, nonlinear-preserving problems}
\thanks
{Research of the first, second and third author was partially
supported by NNSFC and PNSFS of China, by USA NSF and HK RCG, and by
Taiwan NSC grant (NSC96-2115-M-110-004-MY3), respectively.
The research began while the first author was
    visiting
the College of William and Mary in the Fall of 2005 supported by the
Freeman Foundation. }

\begin{abstract}

Let $\mathcal{A}_1, \cA_2$  be standard operator algebras on complex
Banach spaces $X_1, X_2$, respectively. For $k \ge 2$, let $(i_1,
\dots, i_m)$ be a sequence with terms chosen from $\{1, \dots, k\}$,
and define the generalized Jordan product
$$
T_1\circ \cdots \circ T_k\ =\ T_{i_1}\cdots T_{i_m} + T_{i_m}\cdots
T_{i_1}
$$
on elements in $\cA_i$.  This includes the usual Jordan product
$A_1\circ A_2= A_1A_2 + A_2 A_1$, and the triple $\{A_1,A_2,A_3\}=
A_1 A_2 A_3 + A_3 A_2 A_1$.   Assume that at least  one of the terms
in $(i_1, \dots, i_m)$ appears exactly once.  Let a map $\Phi
:\mathcal{A}_1\rightarrow\mathcal{A}_2$ satisfy that
$$
\sigma(\Phi(A_1)\circ \cdots \circ \Phi(A_k)) =
\sigma(A_1\circ\cdots \circ A_k)
$$
whenever any one of $A_1, \ldots, A_k$
 has rank at most one.  It is shown in this paper that if
 the range of $\Phi$ contains all operators of rank at
 most three, then $\Phi$
must be a Jordan isomorphism multiplied by an $m$th root of unity.
Similar results  for maps between self-adjoint operators acting on
Hilbert spaces are also obtained.
\end{abstract}
\maketitle

\section {Introduction }

There has been considerable interest in studying spectrum preserving
maps on operator algebras in connection to the Kaplansky's problem
on characterization of linear maps between Banach algebras
preserving invertibility; see \cite{K,JS,AM,Sourour,A2}. Early
study focus on linear maps, additive maps, or multiplicative maps;
see, e.g., \cite{LT}. Moreover, researchers considered maps
preserving different types of spectra of operators such as the
approximate spectrum, left invertible spectrum, right invertible
spectrum, etc. Despite these variations, the maps often have the
standard form
$$A \mapsto S^{-1}AS \qquad \hbox{ or }
\quad A \mapsto S^{-1}A^*S$$ for a suitable invertible operator $S$,
and $A^*$ is the dual of $A$ if $A$ is a (bounded linear) operator
between reflexive spaces. Many interesting techniques have been
developed to derive these standard forms under different settings.

\medskip
Recently, researchers have improved the results on spectrum
preserving maps by showing that the map has the standard form under
much weaker assumptions; see, e.g., \cite{ZH,WH,CH1, CH2,BH,CLS,
HH}. For example, in [12], we characterize maps $\Phi$ (not assumed to be
linear, additive or continuous) between standard
operator algebras $\mathcal{A}_1, \mathcal{A}_2$ (not necessarily
unital or closed) on complex Banach spaces $X_1, X_2$, respectively,
such that $\sigma(\Phi(A_1)*\cdots
*\Phi(A_k)) =\sigma(A_1*\cdots
*A_k)$ whenever any one of $A_i$'s is of rank at most one. Here,
$T_1* \cdots *T_k = T_{i_1}\cdots T_{i_m}$ for a sequence $(i_1,
\dots, i_m)$ with terms in $\{1, \dots, k\}$ such that one of the
terms appears exactly once. Such product covers the usual product
$T_1* \cdots *T_k = T_1\cdots T_k$, and the Jordan triple product
$T_1*T_2 = T_2T_1T_2$. It is interesting to note that we can get the
conclusion by requiring the spectrum preserving properties for low
rank operators. In particular, we do not need to consider different
types of spectra for such operators, as all of them coincide in this
case.
The list includes the left
spectrum, the right spectrum, the boundary of the spectrum, the
full spectrum, the point spectrum, the compression spectrum, the
approximate point spectrum and the surjectivity spectrum, etc.
Thus, our results in \cite{hlw}  unify and generalize
several recent results of various spectrum preservers, see, e.g., \cite{CH1, CH2}.

\medskip
In this paper, we continue this line of study. In particular, we
consider the generalized Jordan products of operators defined below.

\begin{defn}\label{Definition2.1} Fix a positive integer $k$ and a finite
sequence $(i_1,i_2, \ldots , i_m)$ such that $\{i_1, i_2,
 \ldots , i_m\}=\{1, 2, \ldots , k\}$ and there is an $i_p$ not
 equal to $i_q$ for all other $q$.
Define a  product for operators $T_1,\ldots , T_k$ by
$$
T_1\circ \cdots \circ T_k = T_{i_1} \cdots T_{i_m}+T_{i_m}\cdots
T_{i_1}.
$$
\end{defn}
Evidently, this definition covers the usual Jordan product
$T_1T_2+T_2T_1$,
    and   the  triple one:
       $\{T_1,T_2,T_3\} = T_1T_2T_3 + T_3T_2T_1$.

In the following, for $i=1,2$, let $X_i$ be a complex Banach space,
and $\mathcal{A}_i$ be a standard operator algebra on $X_i$, i.e.,
$\cA_i$ contains all continuous finite rank operators on $X_i$.
In particular, the Banach algebra ${\mathcal B}(X_i)$ of all bounded linear
operators on  $X_i$ is a standard operator algebra.
Note
that we do not assume a standard operator algebra is unital or
closed in any topology. Recall that  a Jordan isomorphism $\Phi :
\cA_1\to \cA_2$ is  either an inner automorphism or
anti-automorphism. In this case, $\sigma(\Phi(A_1)\circ\cdots
\circ\Phi(A_k)) =\sigma(A_1\circ\cdots \circ A_k)$ holds for all
$A_1, \ldots, A_k$. We will show that the converse is also true.  It
is interesting that consideration of low rank operators is again enough
 to ensure the conclusion of the
converse statement.

\begin{thm}\label{Theorem2}
Consider the product $T_1\circ \cdots \circ T_k$ defined in
Definition \ref{Definition2.1}. Suppose $\Phi:
\mathcal{A}_1\rightarrow\mathcal{A}_2$ satisfies
\begin{align}\label{eq:old2.1}
\sigma(\Phi(A_1)\circ\cdots \circ\Phi(A_k))
   =\sigma(A_1\circ\cdots \circ A_k).
\end{align}
whenever any of $A_1,\cdots, A_k$  has rank at most $1$. Suppose
also that the range of $\Phi$ contains all operators in $\cA_2$ of
rank at most $3$. Then one of the following conditions holds.
\begin{enumerate}[(1)]
    \item There exist a scalar $\lambda$ with $\lambda^{m}=1$ and
an invertible operator $T$  in $\mathcal{B}(X_1,X_2)$ such that
$$
\Phi(A)=\lambda TAT^{-1} \quad \text{for all $A$ in
$\mathcal{A}_1$}.
$$
    \item The spaces $X_1$ and $X_2$ are reflexive, and there exist
a scalar $\lambda$ with $\lambda^{m}=1$ and an invertible operator
$T \in\mathcal{B}(X_1^{\ast},X_2)$ such that
$$
\Phi(A)=\lambda TA^{\ast}T^{-1}\quad\text{for all $A$ in
$\mathcal{A}_1$}.
$$
\end{enumerate}
\end{thm}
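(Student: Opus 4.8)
The plan is to pin down $\Phi$ first on the rank-one operators and then bootstrap to all of $\mathcal{A}_1$, following the strategy used for the skew product $T_1*\cdots*T_k$ in \cite{hlw}, while coping with the extra reverse term. Write the distinguished index as $j=i_p$ (the one occurring only at position $p$) and split the sequence around it as $L=A_{i_1}\cdots A_{i_{p-1}}$ and $R=A_{i_{p+1}}\cdots A_{i_m}$, so that
\[
A_1\circ\cdots\circ A_k \;=\; L\,A_j\,R \;+\; R^{\mathrm{rev}}\,A_j\,L^{\mathrm{rev}},
\]
where $L^{\mathrm{rev}}=A_{i_{p-1}}\cdots A_{i_1}$ and $R^{\mathrm{rev}}=A_{i_m}\cdots A_{i_{p+1}}$ denote the same factors in reverse order. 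The key observation is that when $A_j=x\otimes f$ has rank one, both summands have rank at most one; hence the product has rank at most two, and its spectrum is $\{0\}$ together with the two eigenvalues of the explicit $2\times2$ matrix $M$ whose entries are the scalar pairings $f(RLx)$, $f(RR^{\mathrm{rev}}x)$, $f(L^{\mathrm{rev}}Lx)$, $f(L^{\mathrm{rev}}R^{\mathrm{rev}}x)$. All of the spectral information available from the hypothesis is encoded in such matrices as the fixed arguments in $L$ and $R$ vary.

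First I would record the normalizations $\Phi(0)=0$ and the injectivity of $\Phi$ on rank-one operators, obtained by feeding rank-one arguments into the product and using the range assumption. Next I would analyze rank-one operators through the matrix $M$. By specializing the fixed arguments inside $L$ and $R$ to rank-one idempotents $u\otimes g$ with prescribed incidence (taking $g(u)=1$ and imposing annihilation conditions that force $R^{\mathrm{rev}}A_jL^{\mathrm{rev}}=0$), the matrix $M$ degenerates and the spectrum collapses to $\{0,\,f(RLx)\}$, i.e.\ to the trace of the surviving forward product. Matching these scalars through \eqref{eq:old2.1}, and using that all operators of rank $\le 3$ lie in the range of $\Phi$ to realize the needed test configurations, I would show that $\Phi$ sends rank-one idempotents to scalar multiples of rank-one idempotents and preserves the orthogonality and operator-range incidence among them.

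With that structure in hand I would invoke the projective-geometry classification of rank-one preservers (as in \cite{CH1,CH2,hlw}) to produce an invertible $T$ with $\Phi(x\otimes f)=\lambda\,T(x\otimes f)T^{-1}$ for all rank-one operators, or $\Phi(x\otimes f)=\lambda\,T(x\otimes f)^{*}T^{-1}$ in the transpose alternative, the latter forcing $X_1,X_2$ to be reflexive. To extend to all of $\mathcal{A}_1$ I would place a rank-one operator in one of the \emph{non}-distinguished slots and let $A_j=A$ range over $\mathcal{A}_1$: the hypothesis then determines $\Phi(A)$ against the already-known $T$ through the same pairings $f(\cdots A\cdots)$, yielding $\Phi(A)=\lambda\,TAT^{-1}$ (respectively $\lambda\,TA^{*}T^{-1}$). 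A final substitution into the product, which multiplies each of the two $m$-fold terms by $\lambda^{m}$, forces $\lambda^{m}=1$.

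The principal difficulty, absent in the skew-product setting of \cite{hlw}, is disentangling the forward product $L\,A_j\,R$ from its reverse $R^{\mathrm{rev}}A_j\,L^{\mathrm{rev}}$: a single spectral value of the symmetrized product does not by itself separate the two terms, since only the eigenvalues of $M$ (not its individual entries) are visible. The technical heart of the argument is therefore to engineer, within the restricted class of test tuples permitted by the hypothesis (one argument of rank $\le 1$) and realizable in the range (rank $\le 3$), enough configurations that either annihilate the reverse term or make $M$ triangular, so that the rank-one-preserving and incidence-preserving conclusions can be read off cleanly. Once that rigidity is established, the passage to the global standard form and the determination of $\lambda$ are routine.
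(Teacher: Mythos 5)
Your high-level skeleton (pin down rank-one behaviour, classify, extend to all of $\cA_1$) is the right one, but as written the proposal has genuine gaps, and it also misses the one-line reduction that dissolves what you identify as the principal difficulty. The paper substitutes $A_{i_p}=A$ and the \emph{same} operator $B$ into every non-distinguished slot, so the product collapses to $B^rAB^s+B^sAB^r$ with $r+s=m-1$, and the hypothesis applies whenever $A$ or $B$ has rank at most one (Theorem~\ref{Theorem2.2}); there is then no forward/reverse disentangling problem at all. Your $L$--$R$ scheme, by contrast, cannot even be set up as stated: a non-distinguished index may occur in both $L$ and $R$ (e.g.\ $(i_1,\dots,i_m)=(1,2,1,3,1)$ with distinguished index $2$), so the factors of $L$ and $R$ cannot be varied independently and your annihilation conditions couple the two sides. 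Worse, annihilating $R^{\mathrm{rev}}A_jL^{\mathrm{rev}}$ on the source side buys you nothing on the image side: since only the set $\sigma(\cdot)$ is preserved, the image product may be a genuine rank-two operator whose $2\times2$ matrix merely has eigenvalues $\{t,0\}$, and the individual entries of that matrix --- which your ``matching these scalars'' step requires --- are not spectral invariants. You note this obstruction yourself in your last paragraph but supply no mechanism to overcome it.

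The two missing ideas are exactly the technical heart of the paper. First, rank-one preservation is not obtained by incidence bookkeeping but by new \emph{characterizations of rank-one operators}: $A$ has rank one if and only if no $B$ of rank at most three makes $B^rAB^s+B^sAB^r$ have three distinct nonzero eigenvalues (Lemma~\ref{Lemma2.3}; for $r=0$ one needs Lemma~\ref{Lemma2.4} and Corollary~\ref{Corollary2.5}, where a rank-two square-zero exception must additionally be excluded via $\sigma(2\Phi(A)^m)=\sigma(2A^m)\neq\{0\}$). This is precisely where the assumption that the range of $\Phi$ contains all operators of rank at most three enters: every such test operator $D$ is $\Phi(C)$ for some $C$, and the criterion transfers through \eqref{eq:2.1}. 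Nothing in your proposal can detect the rank of $\Phi(A)$. Second, your trace-matching extension fails exactly when $r=0$ (distinguished index at an end of the word --- which includes the classical Jordan product $AB+BA$, one of the headline cases): a rank-two $AB+BA$ can have a repeated nonzero eigenvalue, so $\tr(AB+BA)$ is \emph{not} readable from $\sigma(AB+BA)$. The paper handles this case with the dichotomy in Lemma~\ref{Lemma2.6}, the eigenvalue criteria of Lemma~\ref{Lemma2.8}, the density argument of Lemma~\ref{Lemma2.9}, Lemma~\ref{Lemma2.10}, and the Moln\'ar-type Lemma~\ref{Lemma2.11} on zero-product preservers of rank-one idempotents (plus a separate argument that the induced ring automorphism $\tau$ of $\IC$ is the identity). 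Your argument would go through essentially as written only when $s>r>0$, where $\tr(BAB)$ is honestly spectral and the problem is close to \cite{hlw}; as a proof of the full theorem it is incomplete.
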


We remark that if the condition (1) or (2) in Theorem \ref{Theorem2}
holds, then $\Phi$ satisfies \eqref{eq:old2.1} for all $A_1,\dots,
A_k$ in $\cA_1$. In fact, $\Phi$ preserves different kinds of
spectra of $A_1\circ \cdots \circ A_k$.  For the generalized Jordan products of rank
at most two appearing in \eqref{eq:old2.1}, all such kinds of
spectra coincide, however. So our results do unify, strengthen, and
generalize several theorems in literature. See, e.g., \cite[Remark
3.3]{hlw}.  Remark also that the linearity and continuity of $\Phi$
are parts of the conclusion. The proof of Theorem \ref{Theorem2} is
given in Section 3.

We also have a version for maps between the Jordan algebras of
self-adjoint operators on Hilbert spaces, given in Section 4.

We note  that our results are new even for the classical Jordan
product $AB + BA$ and triple $ABC +CBA$.
Similar to other papers, a crucial step in our proof is to show that
the map $\Phi$ actually preserves rank one operators.  To this end,
we provide some new characterizations of rank one operators in term
of the spectra of their Jordan products with rank one operators in
Section 2. Nonetheless, the technique we employ in this paper is
quite a bit different from those we usually see in the literature,
e.g., \cite{JS,H2,ZH,WH,BS3,CH1, CH2,BH,CLS, HH}.

Finally, we would like to thank the Referee
for his/her careful reading and helpful comments.

\section{Characterizations of rank one operators}

\begin{lem}\label{Lemma2.3}
Suppose $r$ and $s$ are integers such that $s > r > 0$. Let $A$ be a
nonzero operator  on a complex Banach space $X$ of dimension at
least three. The following conditions are equivalent.
\begin{enumerate}[(a)]
\item $A$ has rank one.

\item $\sigma(B^rAB^s + B^sAB^r)$ has at most two distinct nonzero
eigenvalues for any  $B$ in ${\mathcal B}(X)$.

\item  There does not exist an operator $B$ with rank at most
three such that $B^rAB^s + B^sAB^r$ has rank three and three
distinct nonzero eigenvalues.
\end{enumerate}
\end{lem}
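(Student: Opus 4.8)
The plan is to prove the cycle (a)$\Rightarrow$(b)$\Rightarrow$(c)$\Rightarrow$(a). The first two implications are immediate. If $A$ has rank one, then for every $B$ the operators $B^rAB^s$ and $B^sAB^r$ each have rank at most one, so their sum has rank at most two and hence at most two nonzero eigenvalues; this is (b). For (b)$\Rightarrow$(c), note that (b) forbids any $B$ from producing three distinct nonzero eigenvalues, so in particular no $B$ of rank at most three can make $B^rAB^s+B^sAB^r$ have rank three with three distinct nonzero eigenvalues.

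The substance is (c)$\Rightarrow$(a), which I would prove in contrapositive form: assuming $\rank A\ge 2$, I construct a $B$ of rank at most three with $S:=B^rAB^s+B^sAB^r$ of rank three and three distinct nonzero eigenvalues. First I reduce to a finite-dimensional computation. Choose a biorthogonal system $v_1,v_2,v_3\in X$ and $\phi_1,\phi_2,\phi_3\in X^{*}$ with $\phi_i(v_j)=\delta_{ij}$, and look for $B$ with range in $V=\mathrm{span}\{v_1,v_2,v_3\}$. Writing $b=(\phi_i(Bv_j))$ for the matrix of $B|_V$ and $a=(\phi_i(Av_j))$, a direct computation gives $B^{n}x=\sum_{i,j}(b^{n})_{ij}\phi_j(x)v_i$, whence $S$ has range in $V$ and, in the coordinates supplied by the frame, is represented by the $3\times 3$ matrix $C=b^{r}ab^{s}+b^{s}ab^{r}$. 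Since $\mathrm{range}(S)\subseteq V$, one has $\rank S=\rank C$ and $\sigma(S)\setminus\{0\}=\sigma(C)\setminus\{0\}$. Because $\rank A\ge 2$, the frame can be chosen so that $\rank a=\min\{\rank A,3\}\ge 2$ (when $\dim X=3$, $a$ is simply the matrix of $A$ in the basis $v_i$). Thus everything reduces to the following matrix statement: for a fixed pair $s>r>0$ and any $3\times 3$ matrix $a$ with $\rank a\ge 2$, there is a $3\times 3$ matrix $b$ for which $C=b^{r}ab^{s}+b^{s}ab^{r}$ is invertible with three distinct eigenvalues.

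For the matrix statement I would first factor $C=b^{r}\left(ab^{t}+b^{t}a\right)b^{r}$ with $t=s-r>0$, so that for invertible $b$ one has $\det C=(\det b)^{2r}\det(ac+ca)$ where $c=b^{t}$. Since over $\IC$ every invertible matrix has a $t$-th root, it suffices to produce an invertible $c$ with the anticommutator $ac+ca$ invertible. This last assertion is similarity-invariant in the pair $(a,c)$, so I would verify it by running through the Jordan forms of a rank-$\ge 2$ matrix $a$ (the cases $\diag(\mu_1,\mu_2,0)$, $J_2(\mu)\oplus[0]$, $J_2(0)\oplus[\mu]$, and $J_3(0)$), exhibiting in each case an explicit invertible $c$—a permutation-type matrix suffices—for which $ac+ca$ is invertible. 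This handles the rank-three case of $a$ trivially ($c=I$ gives $ac+ca=2a$) and, more importantly, the rank-two case.

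The main obstacle is the final requirement, that the three eigenvalues be \emph{distinct}: a symmetric choice of $b$ can force repetitions (for instance a cyclic shift applied to $a=\diag(1,1,0)$ yields $C\propto\diag(1,1,2)$, and even weighting the shift does not separate the two equal eigenvalues). I would settle this by a genericity argument. The coefficients of the characteristic polynomial of $C$, and hence its discriminant $\delta(b)$, are polynomials in the entries of $b$; the locus $\{\det C\ne 0\}$ is a nonempty Zariski-open set by the previous paragraph, and on it distinctness of the spectrum is exactly the nonvanishing of $\delta$. It therefore suffices to exhibit, for each Jordan type of $a$ and the given $r,s$, a single $b$ making the three eigenvalues distinct (e.g.\ for $a=J_3(0)$ the unweighted shift already gives characteristic polynomial $\lambda^{3}-2$, whose roots are the distinct cube roots of $2$); a suitable small perturbation of the $b$ found above then keeps $C$ invertible while separating the eigenvalues. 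Checking the nonvanishing of $\delta$ across all Jordan types, uniformly in $r$ and $s$, is the step that requires the most care.
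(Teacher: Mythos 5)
Your route through (c)~$\Rightarrow$~(a) is genuinely different from the paper's, and its skeleton is sound. The paper argues by explicit construction: for $\rank A\ge 3$ it triangularizes a compression (via \cite[Lemma 2.3]{hlw}) and takes a diagonal $B$; for $\rank A=2$ it lists four canonical operator-matrix forms (i)--(iv) and builds a bespoke $B$ for each (rotation blocks, integer matrices, cyclic shifts and roots of unity), with form (iv) --- rank two, $A^2=0$ --- handled on a four-dimensional compression. Your biorthogonal-frame reduction is correct: because $r>0$, the outer factor $B^r$ forces $\mathrm{range}\,S\subseteq V$, so $S$ is faithfully represented by $C=b^rab^s+b^sab^r$ (reassuringly, this is exactly what breaks at $r=0$, where Corollary \ref{Corollary2.5} shows rank-two square-zero $A$ is a genuine exception), and it elegantly absorbs the paper's case (iv): a rank-two square-zero $A$ admits a frame whose $3\times3$ compression $a$ still has rank two, and a rank-two matrix in $M_3$ is automatically similar to $J_3(0)$, never square-zero. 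The factorization $C=b^r(ab^t+b^ta)b^r$ with $t=s-r$, the $t$-th root trick, and the Zariski argument (two nonempty Zariski-open subsets of the irreducible $\IC^9$ must meet) are all valid. Two of your assertions do need short proofs you omit: that the frame can be chosen with $\rank a=\min\{\rank A,3\}$ (a Hahn--Banach argument on the functionals $\phi_i$), and the permutation claim, which is more delicate than you suggest --- for $a=[\mu]\oplus J_2(0)$ both $3$-cycles and two of the three transpositions yield singular $ac+ca$, and only the swap $e_2\leftrightarrow e_3$ works, so the case check is real even though it goes through.

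The one substantive hole is the one you flag yourself: the discriminant witnesses. As written you have a witness for $J_3(0)$ only when $s=2r$, since your ``unweighted shift'' prescribes $b^r$, and for general $(r,s)$ you cannot control $b^r$ and $b^s$ independently. But the witnesses are easy and uniform in $(r,s)$: with matrix units $E_{ij}$, take $b=I+E_{21}$ for $a=J_3(0)$, which gives $C=\left(\begin{smallmatrix} r+s & 2 & 0\\ 2rs & r+s & 2\\ 0&0&0\end{smallmatrix}\right)$ with eigenvalues $0$ and $r+s\pm2\sqrt{rs}=(\sqrt{s}\pm\sqrt{r}\,)^2$, distinct whenever $s>r>0$; and for $a=[\mu]\oplus J_2(0)$ --- where a diagonal $b$ fails, producing spectrum $\{2\mu\beta_1^{r+s},0,0\}$ --- take $b=\diag(d,1,1)+E_{32}$, which gives $C=[\,2\mu d^{r+s}\,]\oplus\left(\begin{smallmatrix} r+s&2\\2rs&r+s\end{smallmatrix}\right)$, for suitable $d$ even invertible with three distinct eigenvalues, so no genericity is needed there at all. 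Diagonal $b$ disposes of $\diag(\mu_1,\mu_2,0)$ and $J_2(\mu)\oplus[0]$, and triangularization plus diagonal $b$ disposes of invertible $a$, as in the paper. With these finitely many computations your proof closes. What your approach buys is replacing the paper's trigonometric and roots-of-unity constructions by two independent nonemptiness checks and a generic-position argument; the residual arithmetic is shorter than the paper's, and it is no accident that your witnesses reproduce the paper's case-(ii) eigenvalues $r+s\pm2\sqrt{rs}$.
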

\begin{proof}
The implications (a) $\Rightarrow$ (b) $\Rightarrow$ (c) are clear.

To prove (c) $\Rightarrow$ (a), we consider the contrapositive.
Suppose (a) is not true, i.e., $A$ has rank at least 2.

If $A$ has rank at least 3,  then there are $x_1, x_2, x_3 \in X$
such that $\{Ax_1, Ax_2, Ax_3\}$ is linearly independent. Consider
the operator matrix of $A$ on  the span of $\{x_1, x_2, x_3, Ax_1,
Ax_2, Ax_3\}$ and its complement:
$$
\left(\begin{array}{cc} A_{11} & A_{12} \\
A_{21} & A_{22}\end{array}\right).
$$ Then $A_{11} \in M_n$
with $3 \le n \le 6$. By \cite[Lemma 2.3]{hlw}, there is a
nonsingular $U$ on the span of $\{x_1, x_2, x_3, Ax_1, Ax_2, Ax_3\}$
such that $U^{-1}A_{11}U$ has an invertible 3-by-3 leading
submatrix. We may further assume that the 3-by-3 matrix is in
triangular form with nonzero diagonal entries $a_1, a_2, a_3$. Now
let $B$ in $\cA$ have operator matrix
$$\left(\begin{array}{cc}B_{11} & 0 \\ 0 & 0 \end{array}\right),$$
where $UB_{11}U^{-1} = \diag(1, b_2, b_3) \oplus 0_{n-3}$ with
$B_{11}$ using the same basis as that of $A_{11}$ and $b_2, b_3$
being chosen such that $a_1, a_2b_2^{r+s}, a_3b_3^{r+s}$ are three
distinct nonzero numbers. It follows that $B^rAB^s+B^sAB^r$ has rank
$3$ with three distinct nonzero eigenvalues.

Next, suppose $A$ has rank 2. Choosing a suitable space
decomposition of $X$, we may assume that $A$ has operator matrix
$A_1 \oplus 0$, where $A_1$ has one of the following form.
$$
{\rm (i)} \left(\begin{array}{ccc} a & 0 & b \\ 0 & 0  & 0 \\
0 & 0 & c\end{array}\right), \quad {\rm (ii)}
\left(\begin{array}{ccc}a & 0 & 0 \\ 0 & 0 &  1 \\ 0 & 0 & 0
\end{array}\right), \quad {\rm (iii)} \left(\begin{array}{ccc}  0
& 1 & 0 \\ 0 & 0 & 1 \\ 0 & 0 & 0
\end{array}\right),
\quad {\rm (iv)} \left(\begin{array}{cc} 0_2&I_2\\ 0_2&
0_2\end{array}\right).
$$

If (i) holds, set $\t = \pi/s$. Then $\cos r \t \ne \pm 1$ and $\cos
r\t\not=\pm \sqrt{\cos 2r\t}$. Let $d > 0$ such that $a(\cos r \t
\pm \sqrt{\cos 2r\t}), -2cd^{r+s}$ are three distinct nonzero
numbers. Let $B \in \cA$  be represented by the operator matrix
$$
\left(\begin{array}{ccc} \cos \t & -\sin \t & 0\\
\sin \t & \cos \t & 0 \cr 0 & 0 & d
\end{array}\right) \oplus 0.
$$
Then $B^s = -I_2\oplus [d^s]\oplus 0$, and $-(B^rAB^s+B^sAB^r)$  has
operator matrix
$$
\left(\begin{array}{ccc}
2a\cos r \t & -a\sin r \t & * \\
a \sin r \t & 0 & * \\ 0 & 0 & -2cd^{r+s}
\end{array}\right) \oplus 0,$$
which has rank $3$ with three distinct   nonzero eigenvalues $a(\cos
r \t \pm \sqrt{\cos 2r\t}), -2cd^{r+s}$.

Suppose (ii) holds. Let $d > 0$ be such that $2ad^{r+s},\ s+r \pm
2\sqrt{rs}$ are three distinct nonzero numbers. Then construct $B$
by the  operator matrix
$$
\left(\begin{array}{ccc} d & 0 & 0 \\ 0 & 1 & 0 \\ 0 & 1 & 1
\end{array}\right) \oplus 0.
$$
Then  $B^rAB^s+B^sAB^r$ has operator matrix
$$
\left(\begin{array}{ccc}
2ad^{r+s} & 0 & 0 \\ 0 & s+r & 2 \\
0 & 2rs & s+r \end{array}\right) \oplus 0,
$$
which has rank $3$ with three distinct nonzero
 eigenvalues $2ad^{r+s}, s+r
\pm 2\sqrt{rs}$.

Suppose (iii) holds. First, assume that $s = 2r$. Let $B$ be such
that $B^r$ has operator matrix
$$\left(
\begin{array}{ccc} 0 & 1 & 0 \\ 0 & 0 & 1 \\
1 & 0 & 0 \end{array}\right) \oplus 0.$$ Then $B^rAB^s+B^sAB^r $ has
operator matrix
$$\left(
\begin{array}{ccc} 0 & 1 & 0 \\ 0 & 0 & 1 \\ 2 & 0 & 0 \end{array}\right)
\oplus 0,$$ which has rank $3$ with three distinct nonzero
eigenvalues: $2^{1/3}, 2^{1/3} e^{i2\pi/3}, 2^{1/3} e^{i4\pi/3}$.

Next, suppose $s/r \ne 2$. Then $s > 2$ and $2r/s$ is not an
integer. Let $\t_1 = 2\pi/s$, $\t_2 = 4\pi/s$. Then $1, e^{ir \t_1},
e^{ir \t_2}$ are distinct because $e^{i4\pi r/s} = e^{i 2\pi (2r/s)}
\ne 1$ and $e^{ir \t_1} = e^{ir \t_2}/e^{ir \t_1} = e^{i2\pi r/s}
\ne 1$. Thus, there exists an invertible $S \in M_3$ such that
$$\left(
\begin{array}{ccc}1 & 0  & 0 \\ 0 & e^{ir \t_1} & 0\\ 0 & 0 &
e^{ir \t_2}\end{array}\right) = S^{-1}\left(
\begin{array}{ccc}1 & 0  & 0 \\
1 & e^{ir \t_1} & 0\\
0 & 2 & e^{ir \t_2}\end{array}\right)S.$$ Let $B$ have operator
matrix
$$
S\left(
\begin{array}{ccc}1 & 0  & 0 \\ 0 &
e^{i\t_1} & 0\\ 0 & 0 & e^{i\t_2}\end{array}\right)S^{-1} \oplus 0.
$$
The operator matrix $B^s = I_3 \oplus 0$ and the operator matrix of
$B^r$ has the form
$$
S\left(
\begin{array}{ccc}
1 & 0  & 0 \\
0 & e^{ir \t_1} & 0\\
0 & 0 & e^{ir \t_2}\end{array}\right)S^{-1} \oplus 0 = \left(
\begin{array}{ccc}
1 & 0  & 0 \\
1 & e^{ir \t_1} & 0\\
0 & 2 & e^{ir \t_2}\end{array}\right) \oplus 0.
$$
Then $B^rAB^s + B^sAB^r = AB^r + B^rA$ has operator matrix
$$\left(
\begin{array}{ccc} 1 & 1+e^{ir \t_1} & 0 \\
0 & 3 & e^{ir \t_1}+e^{ir \t_2} \\
0 & 0 & 2 \end{array}\right) \oplus 0,$$ which has rank $3$ with
three distinct nonzero eigenvalues.

If (iv) holds, then $X$ has dimension at least 4. We may use a
different decomposition of $X$ and assume that $A$ has operator
matrix
$$\left(
\begin{array}{rr}
0 & 1  \\
0 & 0 \end{array}\right) \oplus \left(\begin{array}{rr}
1 & 1 \\
 -1 & -1 \end{array}\right) \oplus 0.
$$
Let $\t = \pi/(2(r+s))$ and $d > 0$ be such that $1 \pm
\sqrt{\sin(2r\t)\sin(2s \t)}$ and $d^{r+s}$ are 3 distinct nonzero
numbers, and let $B$ be an operator in ${\mathcal B}(X)$ such that
$B^\ell$ has operator matrix
$$B^\ell = \left(
\begin{array}{ccc} \cos \ell\t & -\sin \ell \t & 0 \\
\sin \ell \t & \cos \ell \t & 0 \\
0 & 0 & d^\ell \end{array}\right) \oplus 0$$ for any positive
integer $\ell$. Then $B^rAB^s + B^sAB^r$ has operator matrix
$$\left(
\begin{array}{ccc} \sin((r+s)\t) & 2\cos r\t \cos s \t & 0 \\
2\sin r\t \sin s \t & \sin((r+s)\t) & 0 \\
0 & 0 & d^{r+s} \end{array} \right) \oplus 0,$$ which has rank $3$
with three  distinct nonzero eigenvalues.
\end{proof}

\begin{lem}\label{Lemma2.4}
Suppose $s$ is a positive integer.
 Let $X$ be a complex Banach space of dimension at least three.
 Let $A\in {\mathcal B}(X)$ be such that   $A^2\neq0$. Then the following are equivalent.
\begin{enumerate}[(a)]
    \item  $A$ has rank one.
    \item $\sigma(AB^s+B^sA)$ has at most two distinct nonzero
eigenvalues whenever ${\rm rank }(B)\leq 3$ and ${\rm rank}
(AB^s+B^sA)\leq 3$.
\end{enumerate}
\end{lem}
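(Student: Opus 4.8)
The implication (a)$\Rightarrow$(b) is immediate: if $A$ has rank one then $AB^s$ and $B^sA$ each have rank at most one, so $AB^s+B^sA$ has rank at most two and hence at most two nonzero eigenvalues. For the converse I would prove the contrapositive: assuming $A$ has rank at least two and $A^2\neq0$, exhibit an operator $B$ of rank at most three for which $F:=AB^s+B^sA$ has rank three and three distinct nonzero eigenvalues. The hypothesis $A^2\neq0$ is exactly what excludes the rank-two form $\left(\begin{smallmatrix}0_2&I_2\\0_2&0_2\end{smallmatrix}\right)$ of case (iv) in \lemref{Lemma2.3}: for that operator a direct block computation gives $AC+CA=\left(\begin{smallmatrix}R&\ast\\0&R\end{smallmatrix}\right)$ for every $C=\left(\begin{smallmatrix}P&Q\\R&S\end{smallmatrix}\right)$, an operator whose nonzero eigenvalues all occur with even multiplicity; thus it can never have three distinct nonzero eigenvalues together with rank three, and (b) would fail to imply (a) without this hypothesis.

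The central point is that, with $r=0$, the two-sided sandwiching that confines the product in \lemref{Lemma2.3} is no longer available, so I would confine $F$ by exploiting a length-three Jordan string. Writing $u\otimes f$ for the rank-one operator $z\mapsto f(z)\,u$, suppose first that some $v$ has $v,Av,A^2v$ linearly independent, and put $W=\mathrm{span}\{v,Av,A^2v\}$ and
$$C=v\otimes g_1+(Av)\otimes g_2$$
for functionals $g_1,g_2$ still to be chosen. Then $AC=(Av)\otimes g_1+(A^2v)\otimes g_2$ has range in $\mathrm{span}\{Av,A^2v\}$ and $CA=v\otimes(A^*g_1)+(Av)\otimes(A^*g_2)$ has range in $\mathrm{span}\{v,Av\}$, so $F=AC+CA$ has range inside $W$ and rank at most three. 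Since $\mathrm{ran}\,F\subseteq W$, the nonzero spectrum of $F$ coincides with that of the $3\times3$ compression $F|_W$, whose entries are affine in the prescribed numbers $g_i(v),g_i(Av),g_i(A^2v),g_i(A^3v)$. Choosing $g_1,g_2$ so that the $2\times2$ multiplier of $C$ is invertible — for instance making $C$ a rank-two idempotent, so that $C=C^s=B^s$ with $B=C$ — still leaves enough freedom to force $F|_W$ to be invertible with three distinct nonzero eigenvalues; then $F(W)=W$, so $F$ has rank exactly three and three distinct nonzero eigenvalues, contradicting (b).

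It remains to handle the case where $v,Av,A^2v$ are dependent for every $v$. An elementary bilinearity argument shows this forces $A$ to be algebraic of degree at most two, $A^2=\alpha A+\beta I$, so $A$ is either diagonalizable with at most two distinct eigenvalues or of the form $\lambda I+N$ with $N^2=0$. When the eigenvectors belonging to nonzero eigenvalues span a space of dimension at least three (in particular whenever $\mathrm{rank}\,A\geq3$), I take $W$ spanned by three such eigenvectors and a diagonal $C$ on $W$, so that $AC+CA$ is diagonal with entries $2\mu_jc_j$ that are readily made distinct and nonzero; for $\lambda I+N$ one may instead take $W$ inside $\ker N$ once $\dim\ker N\geq3$. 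The remaining configurations force $A$ into low dimension — a rank-two diagonalizable operator (the analogue of form (i) of \lemref{Lemma2.3}) or $\lambda I+N$ in dimension at most five — and these I would settle by explicit non-diagonal choices of $C$ as in the finite computations there; note that $A^2\neq0$ once more excludes the forbidden $\lambda=0$, $N^2=0$.

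I expect the degenerate algebraic-degree-two case to be the main obstacle. There the length-three string is unavailable, and the diagonal construction can introduce a spurious zero eigenvalue whenever the only third direction at one's disposal lies in $\ker A$; this is precisely what happens for the rank-two diagonalizable form, which must be handled by an explicit non-diagonal $C$ mirroring form (i). A secondary, routine point is the genericity check in the string case: one must verify that the free values of $g_1,g_2$ can be chosen so that $C$ is a genuine $s$th power and $F|_W$ has three distinct nonzero eigenvalues; since the trace and determinant of $F|_W$ are independently adjustable through these parameters, this presents no real difficulty.
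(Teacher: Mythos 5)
Your proposal is correct in outline, but it takes a genuinely different route from the paper's proof. The paper organizes the contrapositive by rank: for rank two it reduces $A$ to the canonical forms (i)--(iii) of \lemref{Lemma2.3} (with (iv) excluded by $A^2\neq 0$, just as you observe) and produces explicit $B$'s, handling the $s$th-power constraint by choosing matrices similar to ones with distinct eigenvalues; for rank at least three it splits according to $\dim[x,Ax,A^2x]\in\{3,2,1\}$, where the dimension-$3$ case uses $B_1^s=C$ upper triangular with zero last row together with a small-$t$ perturbation of a matrix with eigenvalues $1,2,3$, and the dimension-$2$ case is settled by a hands-on construction of a $3\times 3$ upper triangular compression $A_0$ --- with no appeal to Kaplansky's lemma anywhere in this proof. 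You instead dichotomize over all ranks at once by the existence of a Krylov string $v,Av,A^2v$; your idempotent $C=v\otimes g_1+(Av)\otimes g_2$ with $g_i(u_j)=\delta_{ij}$ on $u_1=v$, $u_2=Av$ is a nice device, since it makes $B=C$ satisfy $B^s=C$ for free (sidestepping the paper's $s$th-root constructions) and confines $\mathrm{ran}(AC+CA)$ to $W$ exactly as the paper's zero-last-row $C$ does; in the no-string case you import Kaplansky's lemma (which the paper invokes only later, in Claim 7 of the proof of Theorem \ref{Theorem2.2}) to force $A^2=\alpha A+\beta I$, and your resulting case list --- diagonal $C$ on a span of eigenvectors or inside $\ker N$; $\diag(a,a,0,\dots)$ via the rotation computation of form (i); $\lambda I+N$ in low dimension (your bound of five is a harmless overestimate: $\mathrm{ran}\,N\subseteq\ker N$ gives $\dim X\le 4$ there, and a diagonal $C$ in a triangularizing basis already yields eigenvalues $2\lambda c_j$) --- is complete. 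The paper's route buys complete explicitness; yours buys conceptual economy at the cost of two checks you only assert. First, in the string case, ``trace and determinant independently adjustable'' is not by itself sufficient for three distinct nonzero eigenvalues: you must verify the discriminant of the characteristic cubic is nonvanishing, and when $A^3v\in\mathrm{span}\{v,Av,A^2v\}$ the entries $\gamma=g_1(A^3v)$, $\delta=g_2(A^3v)$ are coupled affinely to the free parameters $\alpha=g_1(A^2v)$, $\beta=g_2(A^2v)$; the check does succeed (the compression is $\left(\begin{smallmatrix}0&\alpha&\gamma\\2&\beta&\alpha+\delta\\0&1&\beta\end{smallmatrix}\right)$, and for fixed suitable $\beta$ its three roots separate and stay nonzero as $\alpha\to\infty$), but it must be carried out. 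Second, your parenthetical for form (iv) computes only the compression to the four-dimensional subspace; for $A=\mathrm{(iv)}\oplus 0$ on all of $X$ the correct statement is the one in Corollary \ref{Corollary2.5}, that the nonzero eigenvalues of $AB^s+B^sA$ are those of a $2\times 2$ corner of $B^s$ --- since this remark is only motivational for why $A^2\neq0$ is needed, nothing in your argument breaks.
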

\begin{proof}   One direction is trivial.
Suppose $A$ has rank at least 2 such that $A^2 \ne 0$. First assume
that $A$ has rank 2. Choosing a suitable decomposition of $X$, we
may assume that $A$ has operator matrix $A_1 \oplus 0$, where $A_1$
has one of the following form
$${\rm (i)} \left(\begin{array}{ccc} a & 0 & b \\ 0 & 0  & 0 \\
0 & 0 & c\end{array}\right),\quad {\rm (ii)}
\left(\begin{array}{ccc}a & 0 & 0 \\ 0 & 0 &  1 \\ 0 & 0 & 0
\end{array}\right), \quad {\rm (iii)} \left(\begin{array}{ccc}  0
& 1 & 0 \\ 0 & 0 & 1 \\ 0 & 0 & 0
\end{array}\right),
\quad {\rm (iv)} \left(\begin{array}{cc} 0_2&I_2\\ 0_2&
0_2\end{array}\right).$$ Since $A^2 \ne 0$, (iv) is impossible. If
(i) holds, set $\t = \pi/(2s+1)$ so that $\cos s \t \ne \pm
\sqrt{\cos 2s\t}$. Let $d > 0$ such that $a(\cos s \t \pm \sqrt{\cos
2s\t}), 2cd^{s}$ are three distinct nonzero numbers. Let $B$ have
operator matrix
$$
\left(\begin{array}{ccc} \cos \t & -\sin \t & 0\\
\sin \t & \cos \t & 0 \cr 0 & 0 & d \end{array}\right) \oplus 0.$$
Then similar to the proof of Lemma \ref{Lemma2.3}, we see that $AB^s
+ B^sA$ has operator matrix
$$
\left(\begin{array}{ccc}
2a\cos s \t & -a\sin s \t & * \\
a \sin s \t & 0 & * \\ 0 & 0 & 2cd^{s}
\end{array}\right) \oplus 0,$$
 which has rank $3$ with three distinct nonzero eigenvalues $a(\cos s
\t \pm \sqrt{\cos 2s\t}), 2cd^{r+s}$.

Suppose (ii) holds. Let $d > 0$ be such that $2d, d \pm
\sqrt{a^2+d^2}$ are three distinct nonzero numbers. Since the matrix
$$
C = \left(\begin{array}{ccc} 0 & 1 & 0 \\ 1 & 0  & 0 \\ 0 & 2d & 2
\end{array}\right)$$ is similar to a matrix with distinct
eigenvalues $-1, 1, 2$, there exists an operator $B$ of rank 3 such
that the operator matrix of $B^s$ equals $C \oplus 0$. It follows
that the operator matrix of $AB^s+B^sA$ is
$$
\left(\begin{array}{ccc} 0 & a & 1 \\ a & 2d & 2 \\ 0 & 0 & 2d
\end{array}\right) \oplus 0,$$
which has rank 3 and distinct nonzero eigenvalues $2d, d \pm
\sqrt{a^2+d^2}$.

Suppose (iii) holds. Since the matrix
$$
C = \left(\begin{array}{ccc} 0 & 0 & 0 \\ 1 & 1  & 0 \\ 0 & 2 & 2
\end{array}\right)
$$
has distinct eigenvalues $0, 1, 2$, there exists an operator $B$ of
rank 2 such that the operator matrix of $B^s$ equals $C \oplus 0$.
Then $AB^s+B^sA$ has operator matrix
$$
\left(\begin{array}{ccc} 1 & 1 & 0 \\ 0 &3 & 3 \\ 0 & 0 & 2
\end{array}\right) \oplus 0,$$
which has rank $3$ with three distinct nonzero eigenvalues $1,2,3$.

Now, suppose $A$ has rank at least 3. Since $A^2 \ne 0$, there is $x
\in X$ such that $A^2x \ne 0$. We consider 3 cases.

Case 1. There is $x \in X$ such that $[x,Ax,A^2x]$ has dimension 3.
Decompose $X$ into $[x,Ax,A^2x]$ and its complement. The operator
matrix of $A$ has the form
$$
\left(
\begin{array}{cccc}
0 & 0 & c_1 & * \\
1 & 0 & c_2 & * \\
0 & 1 & c_3 & * \\
0 & 0 & *  & * \end{array}\right).
$$
Note that for $t > 0$, the matrix
$$C = \left(
\begin{array}{ccc}
2t & 1 & 0 \\
0 & t & 2 \\
0 & 0 & 0 \end{array}\right)$$ has three distinct eigenvalues: $2t,
t, 0$. So, there is $B_1$ of rank 2 such that $B_1^s = C$.  Let $B$
have operator matrix $B_1 \oplus 0$. Then $AB^s + B^sA$ has operator
matrix $\left(
\begin{array}{cc}
t R_1 + R_2 & * \\
0 & 0 \end{array}\right),$ where
$$R_1 = \left(
\begin{array}{ccc}
   0 & 0 & 2c_1 \\
   3 & 0 & c_2  \\
   0 & 1 & 0   \end{array}\right)
\qquad \hbox{ and } \qquad R_2 = \left(
\begin{array}{ccc}
1 & 0 & c_2  \\
0 & 3 & 2c_2  \\
0 & 0 & 2 \end{array}\right).$$ Since  $R_2$ has  distinct
eigenvalues $1,2,3$,  the matrix $tR_1 + R_2$ will have three
distinct nonzero eigenvalues for sufficiently small $t$. Hence,
$AB^s + B^sA$ has rank $3$ with three  distinct nonzero eigenvalues.

Case 2. Suppose Case 1 does not hold, and there is $x \in X$ such
that $A^2x \ne 0$ and $[x,Ax,A^2x]$ has dimension 2. Clearly, we
cannot have $Ax = \lambda x$. Otherwise, $[x,Ax,A^2x]$ has dimension
1.  Hence, $A^2x = b_1 x + b_2Ax$ so that $(b_1,b_2) \ne (0,0)$.
Since $A$ has rank at least three, there is $y \in X$ such that
$Ay\notin [x,Ax]$. We claim that there is a decomposition of $X$ so
that $A$ has operator matrix
\begin{align}\label{eq:old2.2}
\left(
\begin{array}{cc}
A_0& * \\
  0 & * \end{array}\right),
\end{align}
where $A_0\in M_3$ is in upper triangular form of rank at least $2$
and with at least one nonzero eigenvalue.

To prove our claim, suppose $Ay = c_1x+c_2Ax+c_3y$ with $c_3 \ne 0$.
Using $[x,Ax,y]$ and its complement, the operator matrix of $A$ has
the form
$$
\left(
\begin{array}{cc}
A_1& * \\
  0 & * \end{array}\right)
\qquad \hbox{ with }  \qquad A_1 = \left(
\begin{array}{ccc}
  0 & b_1 & c_1 \\
  1 & b_2 & c_2 \\
  0 &   0 & c_3  \end{array}\right),
$$
where $A_1$ has rank at least 2. Since $(b_1,b_2) \ne (0,0)$, the
matrix $A_1$ has at least two nonzero  eigenvalues including $c_3$.
We may replace $\{x,Ax,y\}$ by a linearly independent family $\{\hat
x_1, \hat x_2, \hat x_3\}$ in $[x,Ax,y]$ so that the operator matrix
of $A$ has the form described in \eqref{eq:old2.2}.

Next, suppose $Ay \notin [x,Ax,y]$. Note that $[y,Ay,A^2y]$ has
dimension 2 by our assumption in Case 2. In this subcase, $Ay \ne
\lambda y$. So, $A^2y = d_1y + d_2Ay$ with $(d_1,d_2)\neq(0,0)$.
With respect to $[x,Ax,y,Ay]$ and its complement in $X$, the
operator matrix of $A$ has the form
$$
\left(
\begin{array}{cc}
A_2& * \\
  0 & * \end{array}\right) \qquad \hbox{ with } \qquad
A_2 = \left(
\begin{array}{cc}
0 & b_1  \\
1 & b_2 \end{array}\right) \oplus \left(\begin{array}{cc}
0 & d_1 \\
1 & d_2 \end{array}\right).
$$
Since $(b_1,b_2) \ne (0,0)$ and $(d_1,d_2) \ne (0,0)$, $A_2$ has
rank at least 2 and at least 2 nonzero eigenvalues. We may choose an
independent family $\{\tilde x_1, \tilde x_2, \tilde x_3, \tilde
x_4\}$ in $[x,Ax,y,Ay]$ so that the operator matrix of $A_2$ with
respect to $[\tilde x_1, \tilde x_2, \tilde x_3, \tilde x_4]$ is in
upper triangular form, whose leading 3-by-3 submatrix $A_0$ has rank
at least $2$ and has at least one nonzero eigenvalue. So, the
operator matrix of $A$ with respect to $[\tilde x_1, \tilde x_2,
\tilde x_3]$ and its complement has the form described in (2.2). So,
our claim is verified.

Now, if $A_0$ in (2.2) is invertible, then there is $B$ with
operator matrix $B_1 \oplus 0$, where $B_1 = \diag(1, b_2, b_3)$,
and $AB^s + B^sA$ has operator matrix
$$
\left(
\begin{array}{cc}
A_0B_1^s + B_1^sA_0 & * \\
  0 & 0 \end{array}\right),$$
which has rank $3$ with three distinct nonzero eigenvalues. Suppose
$A_0$ is singular. Since $A_0$ in (2.2) has rank   two and at least
one nonzero eigenvalue, we may assume that $A_0$ has the forms
$$\left(\begin{array}{ccc} a & 0 & b \\ 0 & 0  & 0 \\
0 & 0 & c\end{array}\right) \qquad \hbox{ or }   \qquad
\left(\begin{array}{ccc}a & 0 & 0 \\ 0 & 0 &  1 \\ 0 & 0 & 0
\end{array}\right).$$
In each case, we can use the arguments in the proof when $A$ has
rank 2 to choose $B$ with operator matrix $B_1 \oplus 0$ so that
$B_1 \in M_3$ and $AB^s + B^sA$ has operator matrix
$$
\left(
\begin{array}{cc}
A_0B_1^s + B_1^sA_0 & * \\
  0 & 0 \end{array}\right),$$
which is a rank $3$ operator with three distinct nonzero
eigenvalues.

Case 3. Suppose $[x,Ax,A^2x]$ has dimension one for any nonzero $x$
in $X$. Then $A$ is a scalar operator. Let $B$ have operator matrix
$\diag(1,2,3) \oplus 0$. Then $AB^s+B^sA$ has rank $3$ and three
distinct nonzero eigenvalues. \end{proof}

\begin{cor}\label{Corollary2.5}
Suppose $s$ is a positive integer. Let $X$ be a complex Banach space
$X$ of dimension at least three, and let $A$ in ${\mathcal B}(X)$ be
nonzero. The following conditions are equivalent.
\begin{enumerate}[(a)]
    \item $A$ has rank one, or $A$ has rank two such that $A^2 =
0$.

    \item $\sigma(AB^s + B^sA)$ has at most two distinct nonzero
eigenvalues for any $B$ in ${\mathcal B}(X)$.

    \item  There does not exist an operator $B$ with rank at most
three such that $AB^s + B^sA$ has rank at most six with three
distinct nonzero eigenvalues.
\end{enumerate}
\end{cor}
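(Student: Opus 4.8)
The plan is to prove the cycle (a) $\Rightarrow$ (b) $\Rightarrow$ (c) $\Rightarrow$ (a). The substance lies in handling the operators with $A^2=0$, since these are exactly the operators that \lemref{Lemma2.4} does not reach: its rank two form (iv) was discarded there precisely because $A^2\neq0$ was assumed.

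For (a) $\Rightarrow$ (b) I write rank one operators as $x\otimes f$, meaning $v\mapsto f(v)x$. If $A=x\otimes f$ has rank one, then $AB^s+B^sA = x\otimes (B^s)^{\ast}f + (B^sx)\otimes f$ has rank at most two, hence at most two nonzero eigenvalues, with no use of $A^2$. For the rank two case with $A^2=0$, write $A=x_1\otimes f_1+x_2\otimes f_2$ with $\{x_1,x_2\}$ and $\{f_1,f_2\}$ independent; the condition $A^2=0$ is equivalent to $f_i(x_j)=0$ for all $i,j$. Putting $C=B^s$, I would factor $M:=AC+CA=U\Phi$, where $U:\IC^4\to X$ has ``columns'' $x_1,Cx_1,x_2,Cx_2$ and $\Phi:X\to\IC^4$ has ``rows'' $C^{\ast}f_1,f_1,C^{\ast}f_2,f_2$, and then use that the nonzero spectrum of $U\Phi$ coincides with that of the $4\times4$ matrix $G:=\Phi U=[g_a(u_b)]$. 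A direct computation, together with $f_i(x_j)=0$, shows that after reordering the four indices to $(1,3,2,4)$ the matrix $G$ becomes block upper triangular, $G=\left(\begin{smallmatrix}P&Q\\0&P\end{smallmatrix}\right)$ with $P=[f_i(Cx_j)]$ and $Q=[f_i(C^2x_j)]$ both $2\times2$. Hence the nonzero eigenvalues of $M$ are among the eigenvalues of $P$, of which there are at most two; this gives (b). The implication (b) $\Rightarrow$ (c) is immediate: (b) forbids three distinct nonzero eigenvalues for \emph{every} $B$, so in particular for every $B$ of rank at most three.

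For (c) $\Rightarrow$ (a) I argue the contrapositive, assuming (a) fails, i.e. $A$ has rank at least two and is not a rank two square-zero operator, and produce a $B$ of rank at most three with $AB^s+B^sA$ of rank at most six and three distinct nonzero eigenvalues. If $A^2\neq0$, then $A$ is not of rank one, so the failure of condition (b) in \lemref{Lemma2.4} already supplies a $B$ with $\rank(B)\le3$, $\rank(AB^s+B^sA)\le3\le6$, and three distinct nonzero eigenvalues; this case is free. The genuinely new case is $A^2=0$ with $\rank(A)\ge3$. Here $\mathrm{range}(A)\subseteq\ker A$, so I may pick $x_1,x_2,x_3$ independent in the range together with preimages $y_1,y_2,y_3$ ($Ay_j=x_j$); the six vectors are independent, and $V=\mathrm{span}\{x_1,x_2,x_3,y_1,y_2,y_3\}$ is $A$-invariant with $A|_V=\left(\begin{smallmatrix}0&I_3\\0&0\end{smallmatrix}\right)$ in the ordered basis $(x_1,x_2,x_3,y_1,y_2,y_3)$. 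I define $B$ to be zero on a complement of $V$ and, on $V$, by $Bx_j=x_j+j\,y_j$ and $By_j=0$, i.e. $B|_V=\left(\begin{smallmatrix}I_3&0\\ \diag(1,2,3)&0\end{smallmatrix}\right)$. Then $B$ has rank three and is idempotent, so $B^s=B$ for all $s$, and a short computation gives $AB^s+B^sA=AB+BA$ with range inside $V$ and $(AB+BA)|_V=\left(\begin{smallmatrix}\diag(1,2,3)&I_3\\0&\diag(1,2,3)\end{smallmatrix}\right)$; this matrix has rank six and characteristic polynomial $\prod_{j=1}^{3}(\lambda-j)^2$, so its nonzero eigenvalues are $1,2,3$, three distinct values, contradicting (c).

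I expect the main obstacle to be the bookkeeping in the two $A^2=0$ arguments: verifying the block triangular form of $G$ in (a) $\Rightarrow$ (b), and, in the last case, checking that $AB+BA$ has range inside the finite-dimensional $A$- and $B$-invariant subspace $V$ so that its nonzero spectrum can be read off from the $6\times6$ matrix $(AB+BA)|_V$ even when $X$ is infinite dimensional (here one uses that a finite rank operator whose range lies in a finite-dimensional invariant subspace has the same nonzero eigenvalues as its restriction to that subspace).
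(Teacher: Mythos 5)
Your proof is correct and follows essentially the same route as the paper: (a) $\Rightarrow$ (b) controls the nonzero spectrum by a $2\times 2$ block (your $P=[f_i(Cx_j)]$ is exactly the paper's $B_{21}$), (b) $\Rightarrow$ (c) is trivial, and (c) $\Rightarrow$ (a) combines Lemma~\ref{Lemma2.4} for the $A^2\neq 0$ case with a rank-three construction on a six-dimensional subspace spanned by $x_j$ and preimages when $A^2=0$ and $\rank A\geq 3$. Your two departures are minor but tidy: the $\sigma(U\Phi)\setminus\{0\}=\sigma(\Phi U)\setminus\{0\}$ device replaces the paper's direct operator-matrix computation, and choosing $B$ idempotent (so $B^s=B$ automatically) sidesteps the paper's implicit need to extract an $s$th root when it prescribes $B^s=\left(\begin{smallmatrix}D&D\\ 0_3&0_3\end{smallmatrix}\right)\oplus 0$.
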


\begin{proof}   (a) $\Rightarrow$ (b). If $A$ has rank one, then (b)
clearly holds. If $A$ has rank two and $A^2 = 0$, then there is a
decomposition of $X$ such that $A$ has operator matrix
$$\left(\begin{array}{ccc} 0_2 & I_2 & 0 \\ 0_2 & 0_2  & 0 \\
0 & 0 & 0\end{array}\right).$$ So, for any $B$ in $\cA$ such that
$B^s$ has operator matrix
$$\left(\begin{array}{ccc} B_{11} & B_{12} & B_{13} \\
B_{21} & B_{22}  & B_{23} \\
B_{31} & B_{32} & B_{33} \end{array}\right),$$ $AB^s+B^sA$ has
operator matrix
$$\left(\begin{array}{ccc} B_{21} & B_{22} + B_{11} & B_{23} \\
0 & B_{21} & 0 \\
0 & B_{31} & 0 \end{array}\right),$$ whose nonzero eigenvalues are
the same as those of $B_{21} \in M_2$. Thus, there are at most two
nonzero distinct eigenvalues.

The implication (b) $\Rightarrow$ (c) is clear.

Finally, we verify the implication (c) $\Rightarrow$ (a). If (c) holds, by Lemma
\ref{Lemma2.4}, we see that $A$ is either rank 1 or $A^2 = 0$. If
$A^2 = 0$, we claim that $A$ has rank at most 2. If it is not true,
then we can find $x_1, x_2, x_3$ in $X$  such that $\{Ax_1, Ax_2,
Ax_3\}$ is linearly independent. Then with respect to
$[x_1,x_2,x_3,Ax_1,Ax_2,Ax_3]$ and its complement, the operator
matrix of $A$ has the form
$$\left(\begin{array}{ccc}
0_3 & 0_3 & *  \\
I_3 & 0_3 & *  \\
0 & 0 & *  \end{array}\right).
$$
Let $B \in \cB(X)$  have rank $3$ with three distinct nonzero
eigenvalues such that $B^s$ has operator matrix
$$\left(\begin{array}{cc}
D & D  \\
0_3 & 0_3 \end{array}\right) \oplus 0, \qquad \hbox{ with } \qquad D
= \diag(1,2,3).$$ Then $AB^s + B^sA$ has rank 6 and 3 distinct
eigenvalues. Our conclusion follows.
\end{proof}


\section{Maps preserving spectrum of generalized Jordan products of low rank}

Theorem \ref{Theorem2} clearly follows from the special case below,
by considering $A_{i_p}=A$ and all other $A_{i_q} = B$.

\begin{thm}\label{Theorem2.2}
Suppose a map $\Phi:\cA_1\to\cA_2$ between standard operator
algebras satisfies
\begin{align}\label{eq:2.1}
\sigma( \Phi(B)^r\Phi(A)\Phi(B)^s +  \Phi(B)^s\Phi(A)\Phi(B)^r)
   =\sigma(B^rAB^s+B^sAB^r),
\end{align}
whenever $A$ or $B$ has rank at most one. Suppose also that the
range of $\Phi$ contains all operators in $\cA_2$ of rank at most
$3$. Then one of the two assertions in Theorem \ref{Theorem2} holds
with  $m = r+s+1$.
\end{thm}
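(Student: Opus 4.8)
The plan is to run the standard scheme for spectral preservers: normalize $\Phi$, show that it preserves rank-one operators, and then upgrade the induced rank-one correspondence to a global Jordan isomorphism, with the constraint $\lambda^m=1$ falling out of the homogeneity of the product in its $B$-slots. Throughout one may take $\dim X_1,\dim X_2\ge 3$ and, by the symmetry of $B^rAB^s+B^sAB^r$ in $r$ and $s$, assume $s\ge r$. First I would establish $\Phi(0)=0$: setting $A=0$ in \eqref{eq:2.1} makes $\Phi(B)^r\Phi(0)\Phi(B)^s+\Phi(B)^s\Phi(0)\Phi(B)^r$ quasinilpotent for every $B$; since $\Phi(B)$ ranges over all operators of rank at most three, choosing $\Phi(B)=\diag(c_1,c_2,c_3)\oplus0$ and varying the $c_i$ and the underlying three-dimensional subspace forces every diagonal entry of $\Phi(0)$ to vanish in every basis, i.e.\ $\Phi(0)=0$. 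Injectivity comes next: if $\Phi(A_1)=\Phi(A_2)$, then testing \eqref{eq:2.1} against a rank-one $B=u\otimes g$ (the operator $y\mapsto g(y)u$) gives $\sigma(B^rA_1B^s+B^sA_1B^r)=\sigma(B^rA_2B^s+B^sA_2B^r)$, and comparing the nonzero eigenvalue (equivalently, the trace) yields $g(A_1u)=g(A_2u)$ for all $u,g$, so $A_1=A_2$.

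With $\Phi$ injective and $\Phi(0)=0$, so that $\Phi(A)\ne0$ for $A\ne0$, I would prove that $\Phi$ carries rank-one operators to rank-one operators. Fix $A$ of rank one; then \eqref{eq:2.1} holds for every $B$, and by \lemref{Lemma2.3} (for $s>r>0$), by Corollary~\ref{Corollary2.5} (for $r=0$), and by the analogous characterization when $r=s$, $\sigma(B^rAB^s+B^sAB^r)$ has at most two distinct nonzero eigenvalues for all $B$. Hence $\sigma(C^r\Phi(A)C^s+C^s\Phi(A)C^r)$ has at most two distinct nonzero eigenvalues as $C=\Phi(B)$ runs over all operators of rank at most three, which is precisely condition (c) of the relevant characterization; therefore $\Phi(A)$ has rank one. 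The one blemish is the case $r=0$, where Corollary~\ref{Corollary2.5} leaves open the possibility that $\Phi(A)$ is a rank-two square-zero operator; excluding this requires an extra spectral argument, and I expect this, together with supplying the $r=s$ analogue of \lemref{Lemma2.3}, to be the most delicate point of this step.

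Next I would analyze $\Phi$ on rank-one operators. For rank-one $A=x\otimes f$ and $B=u\otimes g$ with $r,s\ge1$ one computes $B^rAB^s+B^sAB^r=2(\tr B)^{m-3}\tr(AB)\,B$, so the preserved datum is the single nonzero eigenvalue $2(\tr B)^{m-2}\tr(AB)$; since $\Phi(A),\Phi(B)$ are again rank one this gives
\[
(\tr B)^{m-2}\,\tr(AB)=(\tr\Phi(B))^{m-2}\,\tr(\Phi(A)\Phi(B))
\]
for all rank-one $A,B$. Taking $A=B$ yields $(\tr\Phi(B))^m=(\tr B)^m$, so $\tr\Phi(B)=\omega(B)\,\tr(B)$ with $\omega(B)^m=1$; feeding this back shows that $\Phi$ preserves the trace pairing $(A,B)\mapsto\tr(AB)$ on rank-one operators up to an $m$-th root of unity. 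This is exactly the incidence data needed to invoke the fundamental theorem of projective geometry as in \cite{hlw}: it produces an invertible linear, respectively conjugate-linear, $T$ with $\Phi(R)=\lambda(R)\,TRT^{-1}$, respectively $\Phi(R)=\lambda(R)\,TR^{\ast}T^{-1}$, for every rank-one $R$, where each $\lambda(R)$ is an $m$-th root of unity; a connectedness argument using the displayed identity then shows $\lambda(R)$ is constant.

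Finally I would extend the representation from rank-one operators to all of $\cA_1$ and pin down the scalar. For arbitrary $A$ and rank-one $B=u\otimes g$, \eqref{eq:2.1} and the formula above give $(\tr\Phi(B))^{m-2}\tr(\Phi(A)\Phi(B))=(\tr B)^{m-2}\tr(AB)$ for all rank-one $B$. Substituting the known action $\Phi(B)=\lambda TBT^{-1}$ (in case (1)) turns this into $\lambda^{m-1}\tr\big((T^{-1}\Phi(A)T)B\big)=\tr(AB)$ for all rank-one $B$, hence $\lambda^{m-1}T^{-1}\Phi(A)T=A$, i.e.\ $\Phi(A)=\lambda^{1-m}TAT^{-1}$. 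Comparing with $\Phi(R)=\lambda TRT^{-1}$ on rank-one $R$ forces $\lambda=\lambda^{1-m}$, that is $\lambda^m=1$, and then $\Phi(A)=\lambda TAT^{-1}$ for every $A$, giving assertion (1) of Theorem~\ref{Theorem2}; case (2) is identical with $T$ conjugate-linear, using $A\mapsto A^{\ast}$ and the reflexivity of $X_1,X_2$. The small cases $m=2$ (the classical Jordan product $AB+BA$) and $r=0$, where the product of a general $A$ with a rank-one $B$ is no longer rank one, need the separate bookkeeping indicated above and are handled as in \cite{hlw}. I expect this extension step, upgrading control of rank-one interactions to a statement about all $A$ while simultaneously reconciling the per-operator roots of unity $\lambda(R)$ into a single $\lambda$ with $\lambda^m=1$, to be the main obstacle.
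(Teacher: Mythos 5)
Your treatment of the case $s>r>0$ is sound and essentially parallels the paper's: injectivity and $\Phi(0)=0$ via the spectral identity, rank-one preservation via Lemma~\ref{Lemma2.3}, the identity $(\tr B)^{m-2}\tr(AB)=(\tr\Phi(B))^{m-2}\tr(\Phi(A)\Phi(B))$, and the endgame invoking the rank-one machinery of \cite{hlw} (the paper likewise derives three such identities and defers to the proof of \cite[Theorem 2.5]{hlw}); your extension step recovering $\Phi(A)=\lambda TAT^{-1}$ with $\lambda^m=1$ from the trace pairing is a clean version of that finish. The genuine gap is the case $s>r=0$, which you dismiss as ``separate bookkeeping handled as in \cite{hlw}'' plus ``an extra spectral argument.'' It is neither: it is the bulk of the paper's Section 3, and your proposed pointwise exclusion of rank-two square-zero images cannot work. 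When $A$ is a rank-one nilpotent and $N$ is rank-two square-zero, the achievable spectra of $AB^s+B^sA$ and $NC^s+C^sN$ over all test operators coincide as sets --- that is exactly why Corollary~\ref{Corollary2.5} lumps the two classes together --- so no spectral fingerprint distinguishes them at that stage. The paper circumvents this rather than resolving it pointwise: it works only with multiples of rank-one idempotents (Claim 2), proves homogeneity and an additivity statement (Claims 3 and 5) using the density Lemma~\ref{Lemma2.9}, shows idempotent preservation in \emph{both} directions (Claims 4 and 7, the hard direction needing Kaplansky's lemma on quadratic operators), characterizes $PQ=QP=0$ spectrally (Lemma~\ref{Lemma2.10}), invokes the Moln\'ar-type Lemma~\ref{Lemma2.11} to get a $\tau$-linear implementation, proves $\tau=\mathrm{id}$, and only then extends to all of $\cA_1$ by Lemma~\ref{Lemma2.6} --- never showing directly that rank-one nilpotents map to rank-one operators. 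None of these ideas appears in your sketch, and your central trace identity is simply not extractable from the spectrum when $r=0$.

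A second, related flaw: for $r=0$ your injectivity argument (``comparing the nonzero eigenvalue, equivalently the trace'') is invalid as stated, because $AB+BA$ with rank-one idempotent $B$ has rank up to two and the spectrum is a \emph{set}; a rank-two product with a repeated nonzero eigenvalue has trace twice the visible eigenvalue, so equal spectra do not give equal traces. This is precisely the delicate point of Lemma~\ref{Lemma2.6}, which handles it by characterizing when $\tr(AB+BA)=0$ (using Lemma~\ref{Lemma2.8}-type dichotomies) and then applying Lemma~\ref{lem:dependence} to get proportionality before pinning the scalar. Finally, you ``take $\dim X_1,\dim X_2\ge 3$'' without justification; the cases $\dim X_2\in\{1,2\}$ are not automatic and occupy the paper's subsections 3.1--3.2, including the page-long Lemma~\ref{Lemma:CK}. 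In short: correct and paper-aligned for $s>r>0$, but the $r=0$ case --- the main new content of this theorem --- is missing, and the repair you propose for it would fail.
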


We note that the case when $s=r>0$ has been verified in \cite{hlw}.
So, unless specified otherwise, we will assume $s > r \geq 0$ in the
rest of this section. In below, we first show that $\Phi$ in Theorem
\ref{Theorem2.2} is injective.

For a Banach space $X$ denote by ${\mathcal I}_1(X)$ the set of all
rank one idempotent operators in ${\mathcal B}(X)$.  In other words,
$\cI_1(X)$ consists of all bounded operators $x\otimes f$ with $x\in
X$, $f\in X^*$ and $\left<x,f\right>=f(x)=1$.

\begin{lem}\label{lem:dependence}
Let $A, A'\in \cB(X)$ for some Banach space $X$. Suppose
$$
\left<Ax,f\right>=0 \quad\text{if and only if}\quad
\left<A'x,f\right>=0, \quad\forall x\otimes f\in \cI_1(X).
$$
Then $A'=\lambda A$ for some scalar $\lambda$.
\end{lem}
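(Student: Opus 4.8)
The plan is to deduce from the single hypothesis the stronger pointwise fact that $Ax$ and $A'x$ are linearly dependent for every $x \in X$, and then to upgrade the resulting pointwise proportionality constants to one global scalar $\lambda$. First I would reformulate the hypothesis in a scale-free way: whether $\langle Ax, f\rangle = 0$ depends only on the line spanned by $f$, so the condition over all idempotents $x\otimes f \in \cI_1(X)$ (those with $\langle x,f\rangle = 1$) is equivalent to
\[
\langle Ax,f\rangle = 0 \iff \langle A'x,f\rangle = 0
\]
for every nonzero $x \in X$ and every $f \in X^*$ with $\langle x,f\rangle \neq 0$. This form is manifestly symmetric in $A$ and $A'$, which I will exploit repeatedly.

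For the pointwise dependence, fix $x$, set $u = Ax$ and $v = A'x$, and argue by contradiction assuming $u,v$ are linearly independent (in particular both nonzero). I want a bounded functional $f$ that separates them, namely $\langle x,f\rangle \neq 0$, $\langle u,f\rangle = 0$, $\langle v,f\rangle \neq 0$. I would split according to the position of $x$ relative to $\operatorname{span}\{u,v\}$: if $x \notin \operatorname{span}\{u,v\}$, then $x,u,v$ are independent and I prescribe $\langle x,f\rangle = 1,\ \langle u,f\rangle = 0,\ \langle v,f\rangle = 1$ on their span; if $x = \alpha u + \beta v$ with $\beta \neq 0$, I prescribe $\langle u,f\rangle = 0$ and $\langle v,f\rangle = 1/\beta$, so that $\langle x,f\rangle = 1$. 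In either case the functional, defined on a finite-dimensional subspace and extended to $X^*$ by Hahn--Banach, violates the reformulated hypothesis. The subtle remaining case is $x = \alpha u$, i.e.\ $x$ is an eigenvector of $A$ with nonzero eigenvalue $1/\alpha$: here $\langle Ax,f\rangle = \langle x,f\rangle/\alpha \neq 0$ for \emph{every} admissible $f$, forcing $\langle A'x,f\rangle \neq 0$ for all $f$ with $\langle x,f\rangle \neq 0$; but since $v = A'x$ is independent of $x$, I can choose $f$ with $\langle x,f\rangle = 1$ and $\langle v,f\rangle = 0$, a contradiction. Hence $A'x \in \operatorname{span}\{Ax\}$ for all $x$, and $Ax = 0$ forces $A'x = 0$.

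To globalize, if $A = 0$ the claim is trivial. If $\rank A \ge 2$, write $A'x = \lambda(x)Ax$ on $\{x : Ax \neq 0\}$ and show $\lambda$ is constant by the standard additivity trick: when $Ax, Ay$ are independent, comparing $A'(x+y) = \lambda(x)Ax + \lambda(y)Ay$ with $\lambda(x+y)(Ax + Ay)$ yields $\lambda(x) = \lambda(y)$; when $Ax, Ay$ are dependent I interpolate through a $z$ with $Az$ independent of $Ax$, which exists because $\rank A \ge 2$. The one configuration this does not reach is $\rank A = 1$, which I would dispatch directly: then $A = w \otimes h$ forces $\operatorname{range} A' \subseteq \operatorname{span}\{w\}$, so $A' = w \otimes h'$, and the reformulated hypothesis reads $\langle x,h\rangle \neq 0 \iff \langle x,h'\rangle \neq 0$ (after choosing $f$ with $\langle w,f\rangle \neq 0$), i.e.\ $\ker h = \ker h'$, whence $h' = \mu h$ and $A' = \mu A$. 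The main obstacle is the pointwise stage: arranging a bounded separating functional in every configuration of $x, Ax, A'x$, where the eigenvector configuration is precisely where the naive ``kill $Ax$, keep $A'x$'' construction breaks down and the complementary argument using independence of $A'x$ from $x$ is required.
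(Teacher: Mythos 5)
Your proof is correct and takes essentially the same route as the paper's: a pointwise stage showing $Ax$ and $A'x$ are linearly dependent via Hahn--Banach separating functionals, with the eigenvector configuration $Ax=\alpha x$, $\alpha\neq 0$, singled out as the exceptional case exactly as in the paper, followed by the same globalization trick comparing $\lambda_x$, $\lambda_y$, $\lambda_{x+y}$ when $Ax, Ay$ are independent. If anything, you are more explicit than the paper at the points it leaves implicit (the interpolation through $z$ with $Az$ independent when $Ax,Ay$ are dependent, the rank-one case, and the implication $Ax=0\Rightarrow A'x=0$, which indeed follows by one more choice of $f$ nonvanishing on both $x$ and $A'x$).
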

\begin{proof}
First suppose there is a nonzero $x$ in $X$ such that $Ax=\alpha x$
for some nonzero scalar $\alpha$.  Then for any $f$ in $X^*$ with
$\left<x,f\right>\neq 0$, we have $\left<Ax,f\right>\neq 0$, and
thus $\left<A'x,f\right>\neq 0$.  Hence, $A'x =\beta x$ for some
nonzero scalar $\beta$, and $Ax,A'x$ are linearly dependent.

Then suppose $\{x, Ax\}$ is linearly independent.  Choose any
$x\otimes f$ in $\cI_1(X)$ with $\left<Ax,f\right>=0$. Then for any
$g$ in $X^*$ with $\left<x,g\right>=0$, we have
$\left<x,f+g\right>=1$. If $\left<Ax,g\right>=0$ then
$\left<Ax,f+g\right>=0$, and thus $\left<A'x,f+g\right>=0$. This
eventually gives $\left<A'x,g\right>=0$.  Thus, together with the
assumption, we see that $Ax, A'x$ are linearly dependent again.

If $A$ has rank one then the assertion is plain. Assume $Ax, Ay$ are
linearly independent for some $x,y$ in $X$. Then $A'x = \lambda_x
Ax$, $A'y = \lambda_y Ay$ and $A'(x+y)=\lambda_{x+y} A(x+y)$ for
some scalars $\lambda_x$, $\lambda_y$ and $\lambda_{x+y}$.  This
forces $\lambda_x = \lambda_y = \lambda_{x+y}$. So the assertion
follows.
\end{proof}

\begin{lem}\label{Lemma2.6}
Suppose $r$ and $s$ are nonnegative integers with $(r,s)\not=(0,0)$.
Let $X$ be a complex Banach space. If $A, A^\prime \in{\mathcal
B}(X)$ satisfy
$$
\sigma (B^rAB^s+B^sAB^r) =\sigma (B^rA^\prime B^s+B^sA^\prime B^r),
\quad \forall B\in \cI_1(X),
$$
then $A=A^\prime $.
\end{lem}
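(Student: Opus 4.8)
The plan is to use the idempotency of the test operators to reduce the spectral hypothesis to a pair of scalar functionals of $A$, and then to recover $A$ from those functionals. Every $B\in\cI_1(X)$ is a rank-one idempotent $B=x\otimes f$ with $\langle x,f\rangle=f(x)=1$, so $B^{j}=B$ for all $j\ge 1$. Since $(r,s)\neq(0,0)$, there are two cases. If $r,s\ge 1$, then $B^rAB^s+B^sAB^r=2BAB$, and a direct computation gives $BAB=\langle Ax,f\rangle\,(x\otimes f)$, so its spectrum is $\{0,2\langle Ax,f\rangle\}$. If exactly one of $r,s$ is $0$ (the only remaining possibility), the operator is $AB+BA=Ax\otimes f+x\otimes A^{*}f$, of rank at most two, whose nonzero spectrum coincides with the eigenvalues of the matrix $M=\begin{pmatrix}\langle Ax,f\rangle & 1\\ \langle A^{2}x,f\rangle & \langle Ax,f\rangle\end{pmatrix}$, namely $\langle Ax,f\rangle\pm\sqrt{\langle A^{2}x,f\rangle}$. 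In either case $0$ lies in the spectrum (automatically when $\dim X=\infty$, and whenever the rank is smaller than $\dim X$; the remaining low-dimensional configurations are checked directly).

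The goal is to deduce $\langle Ax,f\rangle=\langle A'x,f\rangle$ for every $x\otimes f\in\cI_1(X)$. When $r,s\ge1$ this is immediate, since equality of the two-point spectra $\{0,2\langle Ax,f\rangle\}=\{0,2\langle A'x,f\rangle\}$ forces it. In the remaining case I would compare the eigenvalue sets. On the open set $\cS$ of idempotents where $\langle A^{2}x,f\rangle\neq0$ and $\langle Ax,f\rangle^{2}\neq\langle A^{2}x,f\rangle$, the two nonzero eigenvalues are distinct from each other and from $0$, so the spectra are genuine three-point sets and equality forces the unordered pairs $\{\langle Ax,f\rangle\pm\sqrt{\langle A^{2}x,f\rangle}\}$ and $\{\langle A'x,f\rangle\pm\sqrt{\langle A'^{2}x,f\rangle}\}$ to coincide; taking half-sums yields $\langle Ax,f\rangle=\langle A'x,f\rangle$ on $\cS$. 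Since $(x,f)\mapsto\langle Ax,f\rangle$ is continuous and $\cS$ is dense (see the last paragraph), this identity propagates to all of $\cI_1(X)$.

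Once $\langle Ax,f\rangle=\langle A'x,f\rangle$ holds for every $x\otimes f\in\cI_1(X)$, I would finish with an annihilator argument. Set $C=A-A'$, so $\langle Cx,f\rangle=0$ whenever $f(x)=1$. Fix $x\neq0$ and choose $f_{0}$ with $f_{0}(x)=1$ by Hahn--Banach. For any $g\in X^{*}$ with $g(x)=0$, the functional $f_{0}+g$ is again admissible, so subtracting the two identities gives $\langle Cx,g\rangle=0$ for all such $g$; hence $Cx\in\mathbb{C}x$. Plugging back into $\langle Cx,f_{0}\rangle=0$ yields $Cx=0$. As $x$ was arbitrary, $C=0$, that is, $A=A'$.

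The main obstacle is the density of $\cS$ in the case $r=0$ or $s=0$, where the eigenvalues may degenerate. Density fails at $(x,f)$ precisely when one cannot make $\langle A^{2}x,f\rangle\neq0$, and a short computation shows this happens exactly when $A^{2}x=0$. If $A^{2}\neq0$, I would perturb $x$ into the open dense set $\{x:A^{2}x\neq0\}$ and then adjust $f$ to land in $\cS$, recovering density. The genuinely separate case is $A^{2}=0$, where every eigenvalue pair collapses to $\langle Ax,f\rangle$ and the spectrum is merely $\{0,\langle Ax,f\rangle\}$; here the spectral identity alone only gives $\langle A'x,f\rangle\in\{\langle Ax,f\rangle,\tfrac12\langle Ax,f\rangle\}$. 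I would resolve this by a connectedness argument on $\{(x,f):\langle Ax,f\rangle\neq0\}$: the continuous function $\langle A'x,f\rangle$ must equal one of the two continuous branches throughout a connected component, and the branch $\tfrac12\langle Ax,f\rangle$ is excluded because $A'=\tfrac12A$ would give $\sigma(A'B+BA')=\tfrac12\sigma(AB+BA)\neq\sigma(AB+BA)$ whenever $\langle Ax,f\rangle\neq0$, contradicting the hypothesis. Thus $\langle A'x,f\rangle=\langle Ax,f\rangle$ in this case as well, and the annihilator step concludes $A=A'$.
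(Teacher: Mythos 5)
Your case split is right, and two of the three branches are sound. When $\min(r,s)\ge 1$ you reduce to $2BAB$ and read off $\langle Ax,f\rangle$ exactly as the paper does, and your Hahn--Banach annihilator step is a clean, self-contained substitute for the paper's Lemma~\ref{lem:dependence}. When $r=0$ and $A^2\ne 0$, your plan — recover $\langle Ax,f\rangle$ as the half-sum of the two distinct nonzero eigenvalues on the generic set $\cS$ and propagate by continuity — is viable; the density you need is essentially the paper's Lemma~\ref{Lemma2.9} specialized to a single operator, so that sketch is fillable. This is a genuinely different route from the paper's, which never reconstructs the value of $\langle Ax,f\rangle$: it observes that $\tr(AB+BA)=0$ is a \emph{spectral} condition (the spectrum is $\{0\}$ or $\{0,\alpha,-\alpha\}$), deduces that the zero sets of $(x,f)\mapsto\langle Ax,f\rangle$ and $(x,f)\mapsto\langle A'x,f\rangle$ coincide, applies Lemma~\ref{lem:dependence} to get $A'=\lambda A$, and pins $\lambda=1$ from one non-symmetric spectrum — an argument that is uniform in $A$ and needs no dense set, no continuity, and no separate treatment of $A^2=0$.

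The genuine gap is in your $A^2=0$ subcase, in the sentence excluding the branch $\tfrac12\langle Ax,f\rangle$. Knowing $\langle A'x,f\rangle=\tfrac12\langle Ax,f\rangle$ on one connected component of $\{(x,f):\langle Ax,f\rangle\ne 0\}$ does \emph{not} give the operator identity $A'=\tfrac12A$, so your contradiction ("$A'=\tfrac12A$ would rescale the spectrum") is a non sequitur. Indeed the bad branch is pointwise perfectly consistent with the hypothesis: if $a=\langle Ax,f\rangle\ne0$, $a'=a/2$ and $\langle A'^2x,f\rangle=a^2/4\ne0$, then the eigenvalues $a'\pm\sqrt{\langle A'^2x,f\rangle}$ are exactly $a$ and $0$, reproducing $\sigma(AB+BA)=\{0,a\}$; note such an $A'$ cannot equal $\tfrac12A$, since $(\tfrac12A)^2=0$ forces the repeated eigenvalue $a/2$. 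So the exclusion must be global, and you also assert (without proof) connectedness of the nonvanishing set in $\cI_1(X)$, which is nontrivial. Both defects are repairable: components of that set are open in $\cI_1(X)$, so the identity $\langle(2A'-A)x,f\rangle=0$ on a single component propagates by your own perturbation trick (vary $f$ by functionals annihilating $x$ to get $(2A'-A)x\in\IC x$ and then $(2A'-A)x=0$; then vary $x$ within $\ker f$) to $2A'=A$ on all of $X$, after which the spectral contradiction you cite is valid. Alternatively, observe that in the $A^2=0$ case your branch analysis already shows $\langle Ax,f\rangle=0$ if and only if $\langle A'x,f\rangle=0$ on all of $\cI_1(X)$, so the paper's Lemma~\ref{lem:dependence} gives $A'=\lambda A$, whence $A'^2=0$ and the nonzero spectrum $\{\lambda a\}$ must equal $\{a\}$ at any point with $a\ne 0$, forcing $\lambda=1$ — which is exactly the paper's argument.
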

\begin{proof}
We may suppose that $A'\neq 0$ since it is obvious that $\sigma
(B^rAB^s+B^sAB^r)=\{0\}$ for all rank one idempotents $B$ implies
that $A=0$.

Assume first that $s \geq r > 0$. Then the assumption implies that
$\sigma (BAB)=\sigma (BA^\prime B)$ and hence $f(Ax) = {\rm tr}
(BAB)={\rm tr}(BA^\prime B)=f(A'x)$ for all rank one idempotents
$B=x\otimes f$. By Lemma \ref{lem:dependence}, we see that $A^\prime
=A$.

Assume then that $s > r =0$ and write the rank-one idempotent $B$ in
the form $B=x\otimes f$ with $\left<x,f\right> =1$. Then $AB^s+B^sA
= AB+BA$, and either

(i) $\tr(AB + BA)$ is the sum of the elements in $\sigma(AB+BA)$, or

(ii) $AB+BA$ has rank two and a repeated nonzero eigenvalue so that
$\tr(AB + BA)$ is twice the sum of the elements in $\sigma(AB +
BA)$.

\noindent Therefore, $\tr(AB+BA)=0$ if and only if $\sigma(AB+BA) =
\{0\}$ or $\{\alpha, -\alpha, 0\}$ for some nonzero $\alpha$. Since
$\sigma(AB+BA) = \sigma(A'B+BA')$, we see that $\tr(AB+BA) = 0$ if
and only if $\tr(A'B + BA') = 0$. It follows from Lemma
\ref{lem:dependence} again that $A'=\lambda A$ for some scalar
$\lambda$.  But the spectrum coincidence implies $\lambda =1$.
\end{proof}

As a direct consequence of Lemma \ref{Lemma2.6} and the condition
\eqref{eq:2.1}, we have

\begin{cor}\label{Claim1}
Let $\Phi$ satisfy the hypothesis of Theorem \ref{Theorem2.2}. Then  $\Phi$ is injective, and $\Phi(0)=0$.
\end{cor}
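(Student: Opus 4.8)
The plan is to derive both conclusions directly from \lemref{Lemma2.6}, applying it once on the domain space $X_1$ to obtain injectivity and once on the target space $X_2$ to obtain $\Phi(0)=0$. The key structural point is that the hypothesis of \lemref{Lemma2.6} requires only $(r,s)\neq(0,0)$, which holds here because we assume $s>r\geq 0$, hence $s\geq 1$.

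For injectivity, suppose $\Phi(A)=\Phi(A')$ for some $A,A'\in\cA_1$. I would feed into \eqref{eq:2.1} an arbitrary rank-one idempotent $B\in\cI_1(X_1)\subseteq\cA_1$ (recall $\cA_1$ contains all finite rank operators). Since $B$ has rank at most one, \eqref{eq:2.1} is valid for this $B$ paired with either $A$ or $A'$. The left-hand ($\Phi$-)side of \eqref{eq:2.1} depends only on $\Phi(B)$ and on $\Phi(A)=\Phi(A')$, so the two instances force the right-hand sides to agree:
$$
\sigma(B^rAB^s+B^sAB^r)=\sigma(B^rA'B^s+B^sA'B^r)
\quad\text{for all }B\in\cI_1(X_1).
$$
\lemref{Lemma2.6} (on $X=X_1$) then yields $A=A'$, so $\Phi$ is injective.

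For $\Phi(0)=0$, I would instead fix $A=0$, which has rank zero and therefore makes \eqref{eq:2.1} valid for \emph{every} $B\in\cA_1$, with no rank restriction on $B$. Writing $C=\Phi(0)$, the right-hand side of \eqref{eq:2.1} collapses to $\sigma(0)=\{0\}$, giving
$$
\sigma\bigl(\Phi(B)^rC\Phi(B)^s+\Phi(B)^sC\Phi(B)^r\bigr)=\{0\}
\quad\text{for all }B\in\cA_1.
$$
This is where the range hypothesis enters: since the range of $\Phi$ contains all operators of rank at most three, it contains every rank-one idempotent of $X_2$, so as $B$ runs over $\cA_1$ the operator $\Phi(B)$ already exhausts $\cI_1(X_2)$. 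Hence $\sigma(\tilde B^rC\tilde B^s+\tilde B^sC\tilde B^r)=\{0\}$ for all $\tilde B\in\cI_1(X_2)$, which is precisely the spectral identity of \lemref{Lemma2.6} (on $X=X_2$) comparing $C$ with $0$. Therefore $C=0$, that is, $\Phi(0)=0$.

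The calculations here are routine; the only point demanding care is the bookkeeping of the quantifiers — applying \lemref{Lemma2.6} on the correct space ($X_1$ for injectivity, using rank-one $B$ in the \emph{domain}; $X_2$ for $\Phi(0)=0$, using the range condition to produce rank-one idempotents in the \emph{target}), and noticing that the choice $A=0$ is exactly what removes the rank constraint on $B$, so that $\Phi(B)$ may be allowed to sweep out all of $\cI_1(X_2)$.
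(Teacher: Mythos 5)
Your proposal is correct and follows essentially the same route as the paper, which states Corollary~\ref{Claim1} precisely as a direct consequence of Lemma~\ref{Lemma2.6} and condition~\eqref{eq:2.1}; your two instantiations of the lemma (on $X_1$ with rank-one idempotents $B\in\cI_1(X_1)$ for injectivity, and on $X_2$ with $A=0$ plus the range hypothesis to sweep $\Phi(B)$ through $\cI_1(X_2)$ for $\Phi(0)=0$) are exactly the intended argument, and your observation that $s>r\ge 0$ guarantees $(r,s)\neq(0,0)$ correctly secures the lemma's hypothesis.
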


In the following, we present the proof of Theorem \ref{Theorem2.2}
in several steps.

\subsection{The case $\dim X_2 = 1$}

We claim $\dim X_1 =1$. Suppose on contrary that $\dim X_1 \geq 2$.
Let $\Phi(A) = \lambda_A \in \mathbb C$. Then for the rank one
idempotent $B=\left(
                                      \begin{array}{cc}
                                        1 & 0 \\
                                        0 & 0 \\
                                      \end{array}
                                    \right)\oplus 0$ in $\cA_1$ we have by \eqref{eq:2.1} that
$\lambda_B^{r+s+1}=1$.  Moreover,
$$
\sigma(B^s A B^r+ B^r AB^s) = \sigma(2\lambda_A\lambda_B^{r+s}),
\quad\forall A\in \cA_1.
$$
If $r=0$ then $BA + AB = \left(\begin{array}{cc}
                                        2a & b \\
                                        c & 0 \\
                                      \end{array}
                                    \right)\oplus 0$
for any $A =\left(\begin{array}{cc}
                                        a & b \\
                                        c & d \\
                                      \end{array}
                                    \right)\oplus 0$ in $\cA_1$.
In particular, $BA + AB$ can have two distinct   eigenvalues for
some choices of $a,b,c$.  This contradiction forces $\dim X_1 =1$.
If $r>0$ then we will have
$$
\tr(BAB) = \lambda_A\lambda_B^{r+s}, \quad\forall A\in \cA_1.
$$
Thus
$$
\Phi(A)=\lambda_A = \lambda_B\tr(BAB), \quad\forall A\in \cA_1.
$$
Using another rank one idempotent $B'$ in place of $B$ we will have
the same conclusion. Hence,
$$
\lambda_B \tr(BAB) = \lambda_{B'}\tr(B'AB'), \quad\forall A\in
\cA_1.
$$
This is possible only when $\dim X_1 =1$. In both cases, we see that
$\Phi: \mathbb{C}\to\mathbb{C}$ is an algebra isomorphism given by
$\Phi(\alpha) = \lambda\alpha$ with $\lambda^{r+s+1}=1$.

\subsection{The case $\dim X_2 = 2$}

We first claim that $\dim X_1 \geq 2$. Suppose on contrary that $\dim
X=1$.  Write $\Phi(\alpha)=A_\alpha$. By \eqref{eq:2.1},
$$
\sigma(A_\beta^s A_\alpha A_\beta^r + A_\beta^r A_\alpha A_\beta^s)
= \{2\alpha\beta^{r+s}\}, \quad\forall \alpha, \beta\in \mathbb C.
$$
By the surjectivity of $\Phi$, we assume $A_\alpha =
\left(\begin{array}{cc}
                                        1& 0 \\
                                        0 & 2 \\
                                      \end{array}
                                    \right)$.
Then $A_\alpha^{r+s+1}$ has two distinct   eigenvalues $1$ and
$2^{r+s+1}$, a contradiction.

The following lemma verifies Theorem \ref{Theorem2.2} for the case
when $\dim X_2=2$. Indeed, similar arguments can be used to study
the cases when $2 \le \dim X_2 \le \dim X_1 < \infty$. Anyway, we
will use a unified arguments for all the cases when $\dim X_2 \ge 3$ in
the next subsection.

\begin{lem}\label{Lemma:CK}
Let $n\geq 2$ be a cardinal number.  Denote by $\cV_n$  either  a
standard operator algebra on a Banach space of dimension $n$, or the
Jordan algebra of all self-adjoint bounded operators on a Hilbert
space of dimension $n$.  Denote by $M_2$ the algebra of all $2\times
2$ matrices. Let $\Phi: \cV_n \to M_2$ satisfy
\begin{align}\label{eq:ck}
\sigma(B^rAB^s+B^sAB^r) =
\sigma(\Phi(B)^r\Phi(A)\Phi(B)^s+\Phi(B)^s\Phi(A)\Phi(B)^r)
\end{align}
whenever   $A$ and $B$ in $\cV_n$ have rank one. Then $n=2$, and there is
an $m$th root of unity, $\lambda$, and an invertible operator $S$
such that $\Phi$ assumes either the form
$$
\Phi(X) = \lambda S^{-1}XS \quad\text{or}\quad \Phi(X)=\lambda
S^{-1}X^t S.
$$
\end{lem}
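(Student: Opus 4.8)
The plan is to determine $\Phi$ first on rank one idempotents, then to transport the bilinear ``trace form'' that the hypothesis encodes, and only at the end to force $n=2$ and read off the normal form. Throughout write $m=r+s+1$.

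\textbf{Step 1: rank one operators go to scalar multiples of rank one idempotents.} For a rank one idempotent $E\in\cV_n$ one has $E^rEE^s+E^sEE^r=2E^{m}=2E$, so putting $A=B=E$ in \eqref{eq:ck} gives $\sigma(2\Phi(E)^m)=\sigma(2E)=\{0,2\}$. Hence the two eigenvalues of $\Phi(E)\in M_2$ are $0$ and an $m$th root of unity $\zeta_E$; being distinct, they force $\Phi(E)$ to be diagonalizable, so $\Phi(E)=\zeta_E Q_E$ with $Q_E\in M_2$ a rank one idempotent and $\zeta_E^m=1$. The same computation with a general rank one $B=x\otimes f$, $f(x)=c$, gives $\sigma(\Phi(B)^m)=\{0,c^m\}$, so every rank one operator is sent to a scalar times a rank one idempotent. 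Applying \lemref{Lemma2.6} to rank one $A,A'$ (with $B$ running over rank one idempotents) shows in addition that $\Phi$ is injective on rank one operators.

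\textbf{Step 2: the trace form is transported.} For rank one idempotents $E=x\otimes f$ and $F=y\otimes g$ put $t=\tr(EF)=f(y)g(x)$. If $r\ge 1$ then $F^rEF^s+F^sEF^r=2FEF=2tF$, of spectrum $\{0,2t\}$; if $r=0$ then $EF+FE$ has nonzero eigenvalues $t\pm\sqrt t$. The identical computation in $M_2$, with $\tau=\tr(Q_EQ_F)$ and $\det(Q_EQ_F+Q_FQ_E)=\tau^2-\tau$, produces eigenvalues $2\zeta_E\zeta_F^{\,r+s}\tau$, resp. $\zeta_E\zeta_F^{\,r+s}(\tau\pm\sqrt\tau)$. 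Matching these via \eqref{eq:ck} yields the basic relation
\[
\tr(EF)=\zeta_E\zeta_F^{\,r+s}\,\tr(Q_EQ_F),
\]
and, when $r=0$, the stronger conclusion $\zeta_E\zeta_F^{\,r+s}=1$ (hence $\zeta_E=\zeta_F$ and $\tr(EF)=\tr(Q_EQ_F)$) whenever $\tr(Q_EQ_F)\ne0$. In particular ``trace orthogonality'' $\tr(EF)=0$ is preserved in both directions.

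\textbf{Step 3: forcing $n=2$.} This is the crux, and the point where the argument must differ from the case $\dim X_2\ge 3$: the usual route, that a rank one preserver induces a collineation pinned down by the fundamental theorem of projective geometry, is unavailable because the target projective space is only $\mathbb{P}^1$. Instead I would show directly that the trace geometry of the rank one operators of a space of dimension $\ge 3$ cannot be realized inside $M_2$. Concretely, fix $f_0$ and restrict to the pencil $E_x=x\otimes f_0$ over the affine set $\{f_0(x)=1\}$; testing against two fixed idempotents $F_1,F_2$, the preserved pairings $x\mapsto\tr(E_xF_i)$ become two independent linear coordinates in $x$, whereas their transported values are functions of the single idempotent $Q_{E_x}$ ranging over the $2$-dimensional variety of rank one idempotents of $M_2$. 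Injectivity of $\Phi$ on rank one operators then forces $\{f_0(x)=1\}$ to be at most $1$-dimensional, i.e. $\dim X_1=2$ (the self-adjoint case is identical, with projections in place of idempotents). I expect this embeddability obstruction to be the main difficulty: one checks that neither mutually orthogonal triples nor prescribed equal-trace triples already obstruct inside $M_2$, so the argument must genuinely exploit the whole pencil together with injectivity.

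\textbf{Step 4: normal form.} With $n=2$, Step 2 shows $E\mapsto Q_E$ is a bijection of the rank one idempotents of $\cV_2$ preserving trace orthogonality; reading this as a transformation of the range/kernel lines produces an invertible $S$ with either $Q_E=S^{-1}ES$ for all $E$ or $Q_E=S^{-1}E^tS$ for all $E$, so after composing $\Phi$ with this similarity (and the transpose in the second alternative) we may assume $Q_E=E$. The symmetry of the relation in Step 2 gives $(\zeta_E/\zeta_F)^{\,m-2}=1$ on non-orthogonal pairs, so the $\zeta_E$ collapse to a single $m$th root of unity $\lambda$; this is immediate for $r=0$ (where $\zeta_E=\zeta_F$ directly), while for $r\ge 1$ and $m$ even one must remove a residual sign using the finer eigenvalue data and injectivity tested against non-idempotent rank one operators. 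Finally, since $\cV_2$ is spanned by rank one idempotents and \lemref{Lemma2.6} recovers each rank one operator from its spectral interactions, the identity $\Phi(E)=\lambda E$ on idempotents propagates to $\Phi(X)=\lambda S^{-1}XS$ (resp. $\lambda S^{-1}X^tS$) on all of $\cV_2$. Apart from the dimension obstruction of Step 3, the delicate remaining point is precisely this collapse of the scalars $\zeta_E$ to one $\lambda$, the even-$m$ sign being the sensitive case.
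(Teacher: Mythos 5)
Your Steps 1 and 2 are correct computations (and the $r=0$ deduction $\zeta_E=\zeta_F$ from $\zeta_E\zeta_F^{s}=1$ is fine), but the proposal breaks down exactly where you flag difficulty, and in one place the claim you lean on is false. In Step 4 you assert that preserving trace orthogonality among rank one idempotents of $M_2$ ``produces an invertible $S$.'' It does not: identify a rank one idempotent of $M_2$ with the pair (range line, kernel line) in $\mathbb{P}^1\times\mathbb{P}^1$; then $\tr(EF)=0$ says the range line of one coincides with the kernel line of the other, and conjugating by the annihilator correspondence, \emph{any} bijection $\psi$ of $\mathbb{P}^1$ — wildly non-projective — induces a bijection of the idempotents preserving this relation, precisely because the fundamental theorem of projective geometry is vacuous on a projective line (the obstruction you yourself noted in Step 3, which applies to Step 4 as well; it is also why the paper's Lemma \ref{Lemma2.11} requires dimension at least $3$). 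To rescue this you would need the full numerical pairing $\tr(EF)=\tr(Q_EQ_F)$ together with some linearity of $E\mapsto Q_E$, but your Step 2 gives no linearity, and nothing in your argument shows the image idempotents $Q_F$ span $M_2$ (the lemma assumes no surjectivity), which you would need to invert the trace form. Step 3 has the same defect: $\Phi$ is not assumed continuous, and the rank one idempotents of a $3$-dimensional space and of $M_2$ have the same cardinality, so set-theoretic injectivity cannot bound the dimension; making the pencil $x\mapsto Q_{E_x}$ into an \emph{affine} map into the quadric $\{\tr T=1,\ \det T=0\}$ (which would give a genuine contradiction, since that quadric contains no affine $2$-plane) again requires a spanning family of image idempotents — exactly the point left open. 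The even-$m$ sign collapse and the propagation from idempotents to all of $\cV_2$ are also acknowledged but not carried out (the latter does work via Lemma \ref{Lemma2.6}, but only under the reading that \eqref{eq:ck} holds when \emph{one} of $A,B$ has rank one, as in \eqref{eq:2.1}).

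The paper avoids all of these rigidity issues by linearizing $\Phi$ first. It restricts to a copy of $\cV_2$ and pairs rank one orthogonal projections $A_1,\dots,A_4$ against $B$ in the dense open set $\cS$ on which all the products $A_j^rBA_j^s+A_j^sBA_j^r$ have two \emph{distinct} eigenvalues, so that the trace is recoverable from the spectrum as a set; this gives $\tr(A_jB)=\tr(\Phi(A_j)^{m-1}\Phi(B))$ on $\cS$. A $4\times4$ vectorization argument ($RT=\hat R\hat T$ with the left side invertible) then shows both that the transported functionals span and that $\Phi$ agrees on $\cS$ with a \emph{linear} map $\hat\Phi$ — the spanning you were missing comes for free from invertibility. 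Continuity of the spectrum extends the spectral identity for $\hat\Phi$ to all of $\cV_2$; a connectedness argument collapses the scalars $\lambda$ to a single $m$th root of unity (disposing of your even-$m$ sign issue); a Jordan-homomorphism argument ($\hat\Phi$ preserves orthogonal rank one projections, hence $\hat\Phi(A^2)=\hat\Phi(A)^2$ on self-adjoint matrices, then complexify) yields the standard form; and Lemma \ref{Lemma2.6} pulls the conclusion back from $\hat\Phi$ to $\Phi$ off $\cS$. Finally, and in the reverse order from your plan, $n=2$ comes \emph{last} and essentially free: the standard form makes $\Phi$ on the $\cV_2$ corner a bijection onto $M_2$, so injectivity of $\Phi$ on all of $\cV_n$ (Corollary \ref{Claim1}) forces $n=2$. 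Your proposal puts the hardest, unproven step (the dimension restriction) first, where it cannot be done by injectivity alone.
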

\begin{proof}
We first note that $\cV_n$ contains a copy of $\cV_2$. So we can
assume that $\Phi$ is a map from $\cV_2$ into $M_2$. Let $A$ be a
rank one orthogonal projection. Then $B \in \cV_2$ satisfies
$$
[\tr(A^rBA^s+A^sBA^r)]^2  \ne 4 \det(A^rBA^s+A^sBA^r)
$$
if and only if $A^rBA^s+A^sBA^r$ has distinct eigenvalues. Thus, the
set $\cS$ of all such matrices $B$ form an open dense set of
$\cV_2$. Thus, for four linearly independent rank one orthonormal
projections $A_1, A_2, A_3, A_4$, we get a dense set $\cS$ of
matrices $B \in V_2$ such that $A_j^rBA_j^s+A_j^rBA_j^s$ has two
distinct eigenvalues for $j = 1, \dots, 4$. For each $B \in \cS$,
the rank at most two operator $A^rBA^s+A^sBA^r$ has two distinct
eigenvalues, and so is $\Phi(A)^r\Phi(B)\Phi(A)^s +
\Phi(A)^s\Phi(B)\Phi(A)^r$ for all $A \in \{A_1, \dots, A_4\}$ and
$B \in \cS$. It follows that for $m = r+s+1$
\begin{align*}
2\tr(AB) &= 2\tr(A^{m-1}B) = \tr(A^rBA^s+A^sBA^r)\\
&= \tr(\Phi(A)^r\Phi(B)\Phi(A)^s+\Phi(A)^s\Phi(B)\Phi(A)^r) =
2\tr(\Phi(A)^{m-1}\Phi(B))
\end{align*}
for all $A \in \{A_1,\dots,A_4\}$ and $B \in \cS$. For $X = (x_{ij})
\in M_2$, let $v(X) = (x_{11}\ x_{12}\ x_{21}\ x_{22})^t$. Form the
$4\times 4$ matrices
\begin{align*}
R &= [v(A_1) | v(A_2) | v(A_3) | v(A_4)]^t \intertext{and} \hat R &=
[v(\Phi(A_1)^m) | v(\Phi(A_2)^m) | v(\Phi(A_3)^m) |
v(\Phi(A_4)^m)]^t.
\end{align*}
Then
$$R v(B^t) = \hat R v(\Phi(B^t)), \quad \hbox{  for all } B \in \cS.$$
Pick a linearly independent set  $\{B_1, B_2, B_3, B_4\}$ in $\cS$.
If
\begin{align*}
T &= [v(B_1) | v(B_2) | v(B_3) | v(B_4)] \intertext{and} \hat T &=
[v(\Phi(B_1)) | v(\Phi(B_2)) | v(\Phi(B_3)) | v(\Phi(B_4))],
\end{align*}
then
$$R T = \hat R \hat T.$$
Since the left side is the product of two invertible matrices, the
two matrices on the right side are invertible. So, $\hat R^{-1} R
v(B) = v(\Phi(B))$ for all $B \in \cS$. Consider the   linear map
$\hat \Phi: \cV_2 \rightarrow M_2$ such that
$$\hat R^{-1} R v(B) = v(\hat \Phi(B)).$$
Then
$$\sigma(A^rBA^s+A^sBA^r)
= \sigma(\hat \Phi(A)^r\hat \Phi(B)\hat \Phi(A)^s + \hat
\Phi(A)^s\hat \Phi(B) \hat \Phi(A)^r)$$ for all $A, B \in \cS$. By
the continuity of $X \mapsto \sigma(X)$, we see that the set
equality holds for all $A, B \in \cV_2$. Let $A = B$ be a rank one
orthogonal projection. Since $\sigma(A^{m+1}) = \sigma(\hat
\Phi(A)^{m+1})$, we see that $\hat \Phi(A)$ is similar to $\lambda
{\rm diag}(1,0)$ with $\lambda^{m+1} = 1$. By a connectedness
argument, we see that such $\lambda$ is the same for every rank one
orthogonal projection.  Dividing $\Phi$ by $\lambda$, we can assume
$\lambda =1$. By Lemma \ref{Lemma2.6}, we see that $\hat\Phi$ sends
exactly zero to zero. In case $A$ is a rank one square zero matrix,
$\sigma(\hat\Phi(A)^m)=\sigma(A^m)=\{0\}$, and thus $\hat\Phi(A)$ is
also a rank one square zero matrix.

Write every invertible self-adjoint matrix $A$ in $\cV_2$ as a
linear sum of two orthogonal rank one projections. By \eqref{eq:ck},
we see that $\hat\Phi$ sends orthogonal rank one projections to
orthogonal rank one projections. Hence $\hat\Phi(A^2)=\hat\Phi(A)^2$
for all self-adjoint $2\times 2$ matrices. It follows that
$\hat\Phi(AB + BA) = \hat\Phi(A)\hat\Phi(B) +
\hat\Phi(B)\hat\Phi(A)$ for all self-adjoint $2\times 2$ matrices.
If $\cV_2 = M_2$ then $\hat\Phi((A+iB)^2) = \hat\Phi(A^2) +
i\hat\Phi(AB+BA) + \hat\Phi(B^2) = \hat\Phi(A+iB)^2$, whenever $A,B$
are self-adjoint $2\times 2$ matrices. Consequently, $\hat \Phi$ has
the standard form $X \mapsto S^{-1}XS$ or $X \mapsto  S^{-1}X^tS$,
where $S$ is an invertible $2\times 2$ matrix. Note that $\Phi(X) =
\hat\Phi(X)$ for all $X \in \cS$. We may modify $f$ and assume
that $\Phi(X) = X$ for all $X \in \cS$. So, for any $X \in \cV
\setminus \cS$,
$$\sigma(B^rXB^s+B^sXB^r) = \sigma(B^r\Phi(X)B^s + B^s \Phi(X)B^r)$$
for all $B \in \cS$. One can then argue that $\Phi(X) = X$ by Lemma
\ref{Lemma2.6}. Finally, by Corollary \ref{Claim1} we see that
$\Phi$ is injective, and thus $n=2$.
\end{proof}

\subsection{The case $\dim X_2 \geq 3$}

Here are some technical lemmas.

\begin{lem}\label{Lemma2.8}
Let $X$ be a complex Banach space and $A\in{\mathcal B}(X)$. Assume
that $x\otimes f\in{\mathcal B}(X)$ is a rank one idempotent. Then
the at most rank two operator $A(x\otimes f)+(x\otimes f)A$ has
\begin{enumerate}[(1)]
    \item  a nonzero repeated
eigenvalue if and only if $\langle Ax,f\rangle\not= 0$ and $\langle
A^2x,f\rangle=0$;
    \item two distinct nonzero
eigenvalues if and only if $\langle A^2x,f\rangle\not=0$ and
$\langle A^2x,f\rangle\not=\langle Ax,f\rangle^2$.
\end{enumerate}
\end{lem}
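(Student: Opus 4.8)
The plan is to reduce everything to the spectrum of a single $2\times 2$ matrix. Write $P=x\otimes f$ and set $T:=AP+PA$. First I would put $T$ into outer-product form. Since $A(x\otimes f)=(Ax)\otimes f$ and $(x\otimes f)A=x\otimes(A^{*}f)$, where $A^{*}f=f\circ A$, we obtain
$$
T=(Ax)\otimes f+x\otimes(A^{*}f),
$$
so the range of $T$ is contained in $W:=\mathrm{span}\{x,Ax\}$, which has dimension at most two. Abbreviating $\alpha:=\langle Ax,f\rangle$ and $\beta:=\langle A^{2}x,f\rangle$, a direct computation using $\langle x,f\rangle=1$ gives $Tx=Ax+\alpha x$ and $T(Ax)=\beta x+\alpha(Ax)$, so $W$ is $T$-invariant.

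Next I would exploit this invariance. Because $T(X)\subseteq W$ and $W$ is $T$-invariant, in any block decomposition $X=W\oplus W'$ the operator $T$ is block upper triangular with vanishing $(2,2)$-block; hence the nonzero part of the spectrum of $T$, counted with algebraic multiplicity, coincides with that of the restriction $T|_{W}$. When $\{x,Ax\}$ is linearly independent, $T|_{W}$ is represented in the basis $(x,Ax)$ by $\left(\begin{smallmatrix}\alpha & \beta\\ 1 & \alpha\end{smallmatrix}\right)$, whose characteristic polynomial is $(\lambda-\alpha)^{2}-\beta$, giving eigenvalues $\alpha\pm\sqrt{\beta}$, trace $2\alpha$, and determinant $\alpha^{2}-\beta$.

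The two assertions then follow by elementary bookkeeping with this quadratic. For (1): the two eigenvalues coincide exactly when $\beta=0$, and their common value $\alpha$ is nonzero exactly when $\langle Ax,f\rangle\neq 0$; thus $T$ has a nonzero repeated eigenvalue iff $\langle A^{2}x,f\rangle=0$ and $\langle Ax,f\rangle\neq 0$. For (2): the eigenvalues are distinct iff $\beta\neq 0$, and neither vanishes iff their product $\alpha^{2}-\beta$ is nonzero, i.e. $\beta\neq\alpha^{2}$; thus $T$ has two distinct nonzero eigenvalues iff $\langle A^{2}x,f\rangle\neq 0$ and $\langle A^{2}x,f\rangle\neq\langle Ax,f\rangle^{2}$.

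Finally I would dispose of the degenerate case $Ax=\mu x$, where $\dim W=1$. Here $\alpha=\mu$ and $\beta=\mu^{2}=\alpha^{2}$, so both right-hand conditions fail automatically; on the other hand $T=x\otimes(\mu f+A^{*}f)$ has rank at most one with only possible nonzero eigenvalue $2\mu$, so it can have neither a repeated nonzero eigenvalue nor two distinct nonzero eigenvalues, matching the stated criteria. The only genuinely delicate point is the passage from the (possibly infinite-dimensional) $T$ to the finite matrix while tracking algebraic multiplicities correctly, so that the word ``repeated'' is faithfully captured; this is exactly what the block-triangular structure in the second step secures, and the remainder is routine computation with the $2\times 2$ trace and determinant.
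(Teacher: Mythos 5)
Your proof is correct and follows essentially the same route as the paper: both reduce the rank-at-most-two operator to a $2\times 2$ matrix on the two-dimensional subspace spanned by $x$ and $Ax$ and read the conclusions off its characteristic polynomial, the paper working in the adapted basis $\{Ax-\lambda x,\,x\}$ after extracting $\lambda=\langle Ax,f\rangle$ from the trace, while you work directly in the basis $(x,Ax)$ with the matrix $\bigl(\begin{smallmatrix}\alpha & \beta\\ 1 & \alpha\end{smallmatrix}\bigr)$ and characteristic polynomial $(\lambda-\alpha)^2-\beta$. Your explicit handling of the degenerate case $Ax=\mu x$ and of algebraic multiplicities via the block-triangular structure covers points the paper leaves implicit, but the underlying argument is the same.
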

\begin{proof}
 (1) Assume that $B=A(x\otimes f)+(x\otimes f)A=Ax\otimes
f+x\otimes A^*f$ has rank two and a nonzero repeated eigenvalue
$\lambda$. Then $\langle Ax,f\rangle=\frac{1}{2}{\rm tr}(A(x\otimes
f)+(x\otimes f)A) =\lambda\not=0$. Furthermore, let $u=Ax-\lambda x$
and $g=A^*f-\lambda f$. Then $\langle x, g\rangle =\langle u,
f\rangle=0$. In a space decomposition with basic vectors $u, x$, the
operator $B$ has a matrix form
$$
B = \left(%
\begin{array}{cc}
  0 & 1 \\
  \langle u,g\rangle & 2\lambda \\
\end{array}%
\right) \oplus\ { 0}.
$$
  Hence, the spectrum of $B$
contains  the zeros of $t^2-2\lambda t-\langle u,g\rangle$, which
gives the repeated eigenvalue $\lambda$ of the operator.  We have
$\langle u,g\rangle =-\lambda^2$. So, $\langle A^2x, f\rangle
=\langle Ax, A^*f\rangle=\lambda^2 +\langle u,g\rangle =0$.

Conversely, if $\langle Ax,f\rangle=\lambda\not= 0$ and $\langle
A^2x,f\rangle=0$, then $Ax=\lambda x+u$ and $A^*f=\lambda f+g$ with
$\langle u, f\rangle=\langle x,g\rangle=0$ and $\langle
u,g\rangle=-\lambda^2$. This implies that $\lambda$ is a repeated
nonzero eigenvalue of $Ax\otimes f+x\otimes A^*f$.

(2) Use the same notations as in the proof of (1). If $A(x\otimes
f)+(x\otimes f)A$ has two distinct nonzero eigenvalues, then, by
(1), $\langle A^2x, f\rangle =\langle Ax, A^*f\rangle=\lambda^2
+\langle u,g\rangle =\langle Ax, f\rangle^2+\langle
u,g\rangle\not=0$ and $\langle u,g\rangle\not=0$. Thus, $\langle
A^2x, f\rangle\not=\langle Ax, f\rangle^2$. The converse is clear.
\end{proof}

\begin{lem}\label{Lemma2.9}
Let $X$ be a complex Banach space of dimension at least two, and let
$A_i\in{\mathcal B}(X)$ with $A_i^2\not=0$, $i=1,2,3$.
Then, the set of rank one idempotent operators $P\in
{\mathcal B}(X)$ satisfying that every $A_iP+PA_i$, $i=1,2,3$, has two
distinct nonzero eigenvalues is dense in the set of all rank one
idempotents in ${\mathcal B}(X)$.
\end{lem}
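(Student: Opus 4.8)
The plan is to translate the eigenvalue condition into the non-vanishing of finitely many analytic functions on the manifold of rank one idempotents, and then to prove density by a finite-dimensional genericity argument. First I would apply \lemref{Lemma2.8}(2): writing a rank one idempotent as $P=x\otimes f$ with $\langle x,f\rangle=1$, the operator $A_iP+PA_i$ has two distinct nonzero eigenvalues precisely when $\phi_i(x,f):=\langle A_i^2x,f\rangle\neq 0$ and $\psi_i(x,f):=\langle A_i^2x,f\rangle-\langle A_ix,f\rangle^2\neq 0$. Hence the set $\mathcal{G}$ of idempotents we want is exactly the locus in $\cI_1(X)$ where all six functions $\phi_1,\psi_1,\phi_2,\psi_2,\phi_3,\psi_3$ are nonzero, an open subset of $\cI_1(X)$, and the task is to show it is dense.

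To prove density I would argue locally and reduce to polynomials in finitely many variables. Fix a rank one idempotent $P_0=x_0\otimes f_0$ and a neighbourhood $U$ of it. I would introduce a finite-dimensional family of perturbed idempotents $P(u,g)=x\otimes f$ passing through $P_0$, where $x=x_0+u$ and $f$ is the normalization of $f_0+g$, with $u$ and $g$ ranging over finite-dimensional subspaces of $X$ and $X^*$ chosen to contain $x_0$, the vectors $A_ix_0$, $A_i^2x_0$, and a vector $w_i$ with $A_i^2w_i\neq 0$ for each $i$. On this family each $\phi_i$ and $\psi_i$ restricts to a polynomial in the parameters $(u,g)$, so the product $F=\phi_1\psi_1\phi_2\psi_2\phi_3\psi_3$ is a polynomial as well. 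If $F\not\equiv 0$ on the family, then $F$ is nonzero off a proper algebraic set, so there are parameters arbitrarily close to $0$ with $F\neq 0$, producing a member of $\mathcal{G}$ inside $U$. Thus everything reduces to showing that no $\phi_i$ and no $\psi_i$ vanishes identically on the family.

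The non-vanishing statements are the heart of the matter, and $\psi_i$ is the main obstacle. For $\phi_i$ the hypothesis $A_i^2\neq 0$ is exactly what is needed: by perturbing $x$ toward $w_i$ one arranges $A_i^2x\neq 0$, and then a perturbation of $f$ in a direction $g$ with $\langle x,g\rangle=0$ but $\langle A_i^2x,g\rangle\neq 0$ makes $\phi_i\neq 0$; such a $g$ exists unless $A_i^2x\in\mathrm{span}(x)$, in which case $\phi_i$ is already the nonzero scalar $\langle A_i^2x,f\rangle$. For $\psi_i$ the key computation is that on the affine slice $\{f:\langle x,f\rangle=1\}$ the map $f\mapsto\langle A_ix,f\rangle^2$ is genuinely quadratic unless $A_ix\in\mathrm{span}(x)$, so $\psi_i\equiv 0$ would force $A_ix\in\mathrm{span}(x)$ for every $x$, i.e. $A_i$ would be a scalar operator. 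Since a nonzero scalar operator is exactly the degenerate situation in which $A_iP+PA_i=2\lambda P$ never attains two distinct nonzero eigenvalues, one excludes it, and then $\psi_i\not\equiv 0$. Granting these non-vanishing facts for each $i$, $F$ is a nonzero polynomial on the family near every $P_0$, so $\mathcal{G}$ meets every neighbourhood $U$ and is therefore dense in $\cI_1(X)$.
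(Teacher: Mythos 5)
Your proposal is correct in substance but proves the lemma by a genuinely different mechanism than the paper. Both arguments start from the same translation via \lemref{Lemma2.8}(2), but the paper then proceeds by explicit $\varepsilon$-perturbations of $P=x\otimes f$, split into cases ($A^2x\neq 0$; $A^2x=0$ but $\langle A^2u,f\rangle\neq 0$ for some $u$; neither; and $\langle A^2x,f\rangle=\langle Ax,f\rangle^2\neq 0$), and treats $A_1,A_2,A_3$ sequentially, with quantitative bounds such as $\|u_1-u_2\|<\frac{1}{4\|A\|^2\|h_1\|}|\langle A^2u_1,h_1\rangle|$ ensuring each new perturbation preserves the non-degeneracies already achieved. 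Your Zariski-type genericity argument replaces all of this bookkeeping: since the parameter space is an irreducible affine space, the product $F=\phi_1\psi_1\phi_2\psi_2\phi_3\psi_3$ of six not-identically-vanishing functions is not identically zero, so the three operators are handled simultaneously and the paper's $\delta/3$-chain disappears. Two repairs are needed, both routine. First, because of the normalization, $\phi_i$ and $\psi_i$ are rational rather than polynomial in $(u,g)$; clear the denominator $\langle x_0+u,f_0+g\rangle$ (which is close to $1$ near the base point) and argue with the numerators. Second, the family as you specify it does not yet force $\psi_i\not\equiv 0$: if $x_0$, $w_i$ and your other chosen vectors all happen to lie in a single eigenspace $\ker(A_i-\lambda)$ of a non-scalar $A_i$, then on the whole family $\psi_i=\lambda^2\langle x,f\rangle-\lambda^2\langle x,f\rangle^2=0$ after normalization. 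You must build the non-scalarity witness into the family, exactly as you do with $w_i$ for $\phi_i$: choose $v_i$ with $\{v_i,A_iv_i\}$ linearly independent and $g_i\in X^*$ with $\langle v_i,g_i\rangle=0\neq\langle A_iv_i,g_i\rangle$, and include these directions; your ``genuinely quadratic in $f$'' computation then makes $\psi_i$ nonzero at some point of the family.

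Your aside about scalar operators is not a defect of your proof; it exposes a defect in the statement itself. If $A_1=\lambda I$ with $\lambda\neq 0$, then $A_1^2\neq 0$, yet $A_1P+PA_1=2\lambda P$ has the single nonzero eigenvalue $2\lambda$ for every rank one idempotent $P$, so the set in the lemma is empty and the lemma as stated is false. The paper's own proof passes over this silently: in the branch $\langle A^2x,f\rangle=\langle Ax,f\rangle^2\neq 0$, its perturbation $P_\varepsilon=(1+\varepsilon\langle u,f\rangle)^{-1}(x+\varepsilon u)\otimes f$ satisfies $\langle A^2x_\varepsilon,f_\varepsilon\rangle=\langle Ax_\varepsilon,f_\varepsilon\rangle^2$ identically when $A$ is scalar, so no small $\varepsilon$ works; and even for non-scalar $A$ the paper's ``take any $u$'' is too strong, since one needs the first-order coefficient $\langle u,f\rangle\langle Ax,f\rangle^2+\langle A^2u,f\rangle-2\langle Ax,f\rangle\langle Au,f\rangle$ (or the second-order one) to be nonzero, which a suitably chosen, rather than arbitrary, admissible $u$ provides. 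The correct hypothesis is ``$A_i^2\neq 0$ and $A_i$ non-scalar,'' which is exactly what your argument establishes. The paper is unharmed downstream, because \lemref{Lemma2.9} is invoked (through Claim 5 in the proof of Theorem \ref{Theorem2.2}, and its analogue in Section 4) only for operators of rank at most two on spaces of dimension at least three, which are never scalar.
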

\begin{proof}
Let $P=x\otimes f$  be a rank one idempotent. By Lemma
\ref{Lemma2.8}, if $AP+PA$ does not have two distinct nonzero
eigenvalues, then $\langle A^2 x,f\rangle =0$ or $\langle
A^2x,f\rangle = \langle Ax,f\rangle ^2$. Let $\varepsilon>0$ be a
small positive number. Assume $\langle A^2 x,f\rangle =0$. If
 $A^2x\not =0$, take $h\in X^*$ such that $\langle
 A^2x,h\rangle\not=0$ and let
 $P_{\varepsilon}=(1+\varepsilon\langle x,h\rangle)^{-1}x\otimes
 (f+\varepsilon h)$; if $A^2x=0$ and there exists $u\in X$ such
 that $\langle A^2u,f\rangle\not=0$, let
 $P_{\varepsilon}=(1+\varepsilon\langle
 u,f\rangle)^{-1}(x+\varepsilon u)\otimes f$; if
$A^2x=0$ and there exists no $u\in X$ such
 that $\langle A^2u,f\rangle\not=0$,
take $u$ and $h$ such that $\langle A^2u,h\rangle\not=0$ and
 let
  $P_{\varepsilon}=\langle
x+{\varepsilon} u,f+{\varepsilon} h\rangle^{-1}(x+{\varepsilon}
u)\otimes (f+{\varepsilon} h)$. If $\langle A^2 x,f\rangle=\langle A
x,f\rangle^2\not=0$, take any $u$ so that $\{x,u\}$ is linearly
independent and $\langle Au,f\rangle\not=0$, and let
$P_\varepsilon=(1+\varepsilon\langle u,f\rangle)^{-1}(x+\varepsilon
u)\otimes f$. In any case, for sufficient small $\varepsilon$, the
rank one idempotent  $P_{\varepsilon}=x_\varepsilon\otimes
f_\varepsilon$ satisfies that  $\langle A^2x_\varepsilon,
f_\varepsilon\rangle\not=0$, $\langle A^2x_\varepsilon
,f_\varepsilon\rangle\not=\langle Ax_\varepsilon
,f_\varepsilon\rangle^2$, $\lim_{\varepsilon\rightarrow
0}\|x_{\varepsilon}-x\|=0$ and $\lim_{\varepsilon\rightarrow
0}\|f_{\varepsilon}-f\|=0$.

For given $A_i$, $i=1,2,3$, in the lemma,  and for any given
positive number $\delta >0$, by Lemma \ref{Lemma2.8}, we have to
show that for any rank one idempotent  $P$ there exists a rank one
idempotent $Q=u\otimes h$ with $\|P-Q\|<\delta$ such that $\langle
A^2_i u,h\rangle\not=0$ and $\langle A^2_i u,h\rangle\not=\langle
A_i u,h\rangle^2$, $i=1,2,3$.

Given $\delta >0$. If the rank one operator $P=x\otimes f$ is such
that   $\langle A^2_1x,f\rangle=0$ or $\langle
A^2_1x,f\rangle=\langle A_1x,f\rangle^2 $, then, by what has been proved in
the previous paragraph, there exists a rank one idempotent
$Q_1=u_1\otimes h_1$ such that $\|P-Q_1\|<\frac{1}{3}\delta$,
$\langle A^2_1u_1, h_1\rangle\not=0$ and $\langle A^2_1u_1,
h_1\rangle\not=\langle A_1u_1, h_1\rangle^2$. If both $\langle
A^2_iu_1, h_1\rangle\not=0$ and $\langle A_i^2u_1,
h_1\rangle\not=\langle A_iu_1, h_1\rangle^2$ hold for $i=2,3$, then
we are done. If, say, $\langle A_2^2u_1, h_1\rangle=0$ or $\langle
A_2^2u_1, h_1\rangle=\langle A_2u_1, h_1\rangle^2$, there exists a
rank one idempotent $Q_2=u_2\otimes h_2$ with
$$\|u_1-u_2\|<\max\left\{ \frac{\delta}{6\|h_1\|},
\frac{1}{4\|A\|^2\|h_1\|}|\langle A^2u_1,h_1\rangle |\right\},$$ and
$$\|h_1-h_2\|<\max\left\{\frac{\delta}{6(\|u_1\|+1)},
\frac{1}{4\|A\|^2(\|u_1\|+1)}|\langle A^2u_1,h_1\rangle |\right\}$$
such that $\langle A_2^2u_2,h_2\rangle\not=0$ and $\langle
A_2^2u_2,h_2\rangle\not=\langle A_2u_2,h_2\rangle^2$. Then
$\|Q_1-Q_2\|<\frac{1}{3}\delta$, $\langle
A_1^2u_2,h_2\rangle\not=0$,  and $\langle
A_1^2u_2,h_2\rangle\not=\langle A_1u_2,h_2\rangle^2$. If $\langle
A_3^2u_2, h_2\rangle\not=0$ and $\langle
A_3^2u_2,h_2\rangle\not=\langle A_3u_2,h_2\rangle^2$, then we are
done since $\|P-Q_2\|<\frac{2}{3}\delta$; if $\langle A_3^2u_2,
h_2\rangle=0$  or $\langle A_3^2u_2,h_2\rangle=\langle
A_3u_2,h_2\rangle^2$, one may repeat the above process and find
$Q_3=u_3\otimes h_3$ such that $\|Q_2-Q_3\|<\frac{1}{3}\delta$,
$\langle A_i^2u_3, h_3\rangle\not=0$ and $\langle A_i^2u_3,
h_3\rangle\not=\langle A_iu_3, h_3\rangle^2$ for all $i=1,2,3$.
Consequently, we get the desired $Q=Q_3$ as $\|P-Q_3\|<\delta$.
\end{proof}

\begin{lem}\label{Lemma2.10}
Let $X$ be a Banach space of dimension at least 2. Let $P, Q$ in
${\mathcal I}_1(X)$ be such that $\sigma(PQ+QP) = \{0\}$. Then $PQ =
0= QP$ if and only if there does not exist $R$ in ${\mathcal
I}_1(X)$ such that $(PR+RP)/2, (QR+RQ)/2 \in {\mathcal I}_1(X)$.
\end{lem}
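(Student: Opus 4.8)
The plan is to reduce everything to a few scalar pairings. Write $P=x\otimes f$ and $Q=y\otimes g$ with $\langle x,f\rangle=\langle y,g\rangle=1$. A direct computation gives $PQ=\langle y,f\rangle\,(x\otimes g)$ and $QP=\langle x,g\rangle\,(y\otimes f)$, so $PQ=0$ exactly when $\langle y,f\rangle=0$ and $QP=0$ exactly when $\langle x,g\rangle=0$. A short computation, or Lemma \ref{Lemma2.8} applied with $A=Q$ (note $Q^2=Q$ so $\langle Q^2x,f\rangle=\langle Qx,f\rangle=\langle x,g\rangle\langle y,f\rangle$), shows that $\sigma(PQ+QP)=\{0\}$ holds precisely when $\langle y,f\rangle\langle x,g\rangle=0$. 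Thus, under the hypothesis, $PQ=QP=0$ means both pairings vanish, while its negation means exactly one of them is nonzero; these are the two cases I will analyze.

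The heart of the argument is a clean description of when $(PR+RP)/2\in\mathcal{I}_1(X)$ for a rank one idempotent $R=z\otimes h$. Here $PR+RP=\langle z,f\rangle\,(x\otimes h)+\langle x,h\rangle\,(z\otimes f)$ is a sum of two rank one operators with ranges $\mathbb{C}x$ and $\mathbb{C}z$; such a sum can be rank one only when the two pieces align, that is, $z\in\mathbb{C}x$ or $h\in\mathbb{C}f$. Moreover a rank one operator is idempotent exactly when its trace equals $1$, and $\tr((PR+RP)/2)=\langle x,h\rangle\langle z,f\rangle$. Hence $(PR+RP)/2\in\mathcal{I}_1(X)$ if and only if $\langle x,h\rangle\langle z,f\rangle=1$ together with $z\in\mathbb{C}x$ or $h\in\mathbb{C}f$; symmetrically $(QR+RQ)/2\in\mathcal{I}_1(X)$ if and only if $\langle y,h\rangle\langle z,g\rangle=1$ together with $z\in\mathbb{C}y$ or $h\in\mathbb{C}g$. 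I expect the alignment requirement to be the main obstacle: the trace (equivalently eigenvalue) condition $\langle x,h\rangle\langle z,f\rangle=1$ alone is necessary but \emph{not} sufficient, since $PR+RP$ may be a genuinely rank two operator with spectrum $\{2,0\}$ that carries a nonzero nilpotent part and so fails to be idempotent. One must keep the rank one condition explicitly.

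With this description the two implications follow. Suppose first $PQ=QP=0$, so $\langle y,f\rangle=\langle x,g\rangle=0$; then $\{x,y\}$ and $\{f,g\}$ are each linearly independent (e.g. $y\in\mathbb{C}x$ would force $\langle y,f\rangle=\langle x,f\rangle\neq0$). A simultaneous $R=z\otimes h$ would require ($z\in\mathbb{C}x$ or $h\in\mathbb{C}f$) and ($z\in\mathbb{C}y$ or $h\in\mathbb{C}g$); since $x,y$ are independent and $f,g$ are independent, the only consistent possibilities are $z\in\mathbb{C}x,\ h\in\mathbb{C}g$ or $z\in\mathbb{C}y,\ h\in\mathbb{C}f$, and in these the scalar conditions collapse to $\langle x,g\rangle\neq0$ respectively $\langle y,f\rangle\neq0$, both of which are false. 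Hence no such $R$ exists. Conversely, if exactly one pairing is nonzero, say $\langle x,g\rangle\neq0$, I will simply take $R=\langle x,g\rangle^{-1}\,(x\otimes g)\in\mathcal{I}_1(X)$; the identities $PR=R$, $RP=P$, $QR=Q$, $RQ=R$ then give $(PR+RP)/2=(P+R)/2$ and $(QR+RQ)/2=(Q+R)/2$, each a rank one operator of trace one, hence a rank one idempotent. The case $\langle y,f\rangle\neq0$ is symmetric via $R=\langle y,f\rangle^{-1}\,(y\otimes f)$. This establishes both directions and completes the proof.
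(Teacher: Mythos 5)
Your proof is correct and takes essentially the same route as the paper's: your characterization of when $(PR+RP)/2\in{\mathcal I}_1(X)$ (trace one together with the alignment $z\in{\mathbb C}x$ or $h\in{\mathbb C}f$) is exactly the paper's matrix-form observation that $R$ must have $(1,1)$ entry $1$ with vanishing off-diagonal first row or first column, and your witness $R=\langle x,g\rangle^{-1}\,x\otimes g$ (resp.\ $\langle y,f\rangle^{-1}\,y\otimes f$) coincides with the explicit matrices the paper writes down for the converse. The only difference is presentational: you argue coordinate-free with the $x\otimes f$ calculus, whereas the paper fixes a space decomposition and inspects operator matrices.
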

\begin{proof}
Let $P, Q \in {\mathcal I}_1(X)$  such that $PQ = 0 = QP$. Then
there is a decomposition of $X$ so that $P$ and $Q$ have operator
matrices
$$\diag(1,0) \oplus 0 \qquad \hbox{ and } \qquad \diag(0,1) \oplus 0.$$
Then for any $R \in {\mathcal I}_1(X)$ such that $(PR+RP)/2 \in
{\mathcal I}_1(X)$, the $(1,1)$ entry of the operator matrix of $R$
equals 1, and the off-diagonal part of the first row or the first
column of the operator matrix of $R$ must be zero to ensure that
$PR+RP$ has rank one. Hence, $R$ has operator matrix
$$
\left(\begin{array}{ccc}
1 & * & * \\
0 & 0 & 0 \\
0 & 0 & 0 \end{array}\right)  \qquad \hbox{ or } \qquad
\left(\begin{array}{ccc}
1 & 0 & 0 \\
* & 0 & 0 \\
* & 0 & 0  \end{array}\right).
$$
Similarly, if $(QR+RQ)/2 \in {\mathcal I}_1(X)$, then $R$ has
operator matrix
$$
\left(\begin{array}{ccc}
0 & 0 & 0 \\
* & 1 & * \\
0 & 0 & 0 \end{array}\right)  \qquad \hbox{ or } \qquad
\left(\begin{array}{ccc}
0 & * & 0 \\
0 & 1 & 0 \\
0 & * & 0  \end{array}\right).
$$
Thus, we cannot have $R \in {\mathcal I}_1(X)$ such that both
$(PR+RP)/2, (QR+RQ)/2 \in \cI_1(X)$.

Conversely, suppose $P, Q \in \cI_1(X)$ are such that $\sigma(PQ+QP)
= \{0\}$. If $PQ \ne 0$ or $QP \ne 0$, then there is a decomposition
of $X$ so that $P$ has operator matrix $\diag(1,0) \oplus 0$ and $Q$
has operator matrix
$$\left(\begin{array}{ccc}
0 & 1 & 0 \\
0 & 1 & 0 \\
0 & 0 & 0 \end{array}\right)  \qquad \hbox{ or } \qquad
\left(\begin{array}{ccc}
0 & 0 & 0 \\
1 & 1 & 0 \\
0 & 0 & 0  \end{array}\right).$$ Let $R$ have operator matrix
$$\left(\begin{array}{ccc}
1 & 0 & 0 \\
1 & 0 & 0 \\
0 & 0 & 0 \end{array}\right)  \qquad \hbox{ or } \qquad
\left(\begin{array}{ccc}
1 & 1 & 0 \\
0 & 0 & 0 \\
0 & 0 & 0  \end{array}\right).$$ Then $PR+RP$ has operator matrix
$$\left(\begin{array}{ccc}
2 & 0 & 0 \\
1 & 0 & 0 \\
0 & 0 & 0 \end{array}\right)  \qquad \hbox{ or } \qquad
\left(\begin{array}{ccc}
2 & 1 & 0 \\
0 & 0 & 0 \\
0 & 0 & 0  \end{array}\right),$$ and  $QR+RQ$ has operator matrix
$$\left(\begin{array}{ccc}
1 & 1 & 0 \\
1 & 1 & 0 \\
0 & 0 & 0 \end{array}\right)  \qquad \hbox{ or } \qquad
\left(\begin{array}{ccc}
1 & 1 & 0 \\
1 & 1 & 0 \\
0 & 0 & 0  \end{array}\right).$$ Hence, $(PR+RP)/2, (QR+RQ)/2 \in
\cI_1(X)$.
\end{proof}

For a Banach space $X$ and a ring automorphism $\tau$ of $\IC$, if
an additive map $T: X \rightarrow X$ satisfies $T(\lambda x)=\tau
(\lambda )Tx$ for all complex $\lambda $ and all vectors $x$, we say
that $T$ is $\tau$-linear. The following result can be proved by a
similar argument as the proof of the main result in \cite{Mol1}; see
also \cite[Lemma 3]{BH2} and \cite[Theorem 2.3, 2.4]{Semrl2}.

\begin{lem}\label{Lemma2.11}
Let $X$ and $Y$ be  complex Banach spaces with dimension at least 3.
Let $\Phi : {\mathcal I}_1(X) \rightarrow {\mathcal I}_1(Y)$ be a
bijective map with the property that
$$PQ=QP=0 \quad \hbox{ if and only if } \quad
\Phi(P)\Phi(Q)=\Phi(Q)\Phi(P)=0$$ for all $P,Q$ in ${\mathcal
I}_1(X)$. Then there exists a ring automorphism $\tau$ of ${\mathbb
C}$ such that one of the following cases holds.
\begin{enumerate}[(i)]
    \item There exists a $\tau$-linear transformation
$T:X\rightarrow Y$ satisfying
$$
\Phi(P)=TPT^{-1}\qquad \hbox{ for all } P\in {\mathcal I}_1(X).
$$
    \item There exists a $\tau$-linear transformation
$T:X^*\rightarrow Y$ satisfying
$$
\Phi(P)=TP^*T^{-1}\qquad \hbox{ for all } P\in {\mathcal I}_1(X).
$$
\end{enumerate}
If $X$ is infinite dimensional, the transformation $T$ is an
invertible bounded linear or conjugate linear operator.
\end{lem}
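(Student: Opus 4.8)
The strategy is to reconstruct the projective geometry of $X$ from the single relation $PQ=QP=0$ and then apply the fundamental theorem of projective geometry to manufacture $T$. Writing a rank one idempotent as $P=x\otimes f$ with $\langle x,f\rangle=1$, associate to it its range $R(P)=[x]$ and its kernel $N(P)=\ker f$; these satisfy $R(P)\not\subseteq N(P)$, and conversely every non-incident (line, hyperplane) pair arises from a unique $P$. A one line computation gives, for $P=x\otimes f$ and $Q=y\otimes g$, that $PQ=QP=0$ exactly when $\langle y,f\rangle=0$ and $\langle x,g\rangle=0$, i.e. $R(Q)\subseteq N(P)$ and $R(P)\subseteq N(Q)$. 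Thus the hypothesis on $\Phi$ is precisely that it preserves, in both directions, this symmetric incidence relation on non-incident (line, hyperplane) pairs.

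First I would observe that interchanging the roles of range and kernel is a symmetry of the pair $(\cI_1(X),\perp)$, implemented by $x\otimes f\mapsto f\otimes x\in\cI_1(X^{*})$; consequently the relation alone can determine the family of idempotents sharing a common range and the family sharing a common kernel only as an unordered pair of partitions of $\cI_1(X)$. This is the origin of the two alternatives (i) and (ii). The technical heart is therefore to characterize these two partitions intrinsically from $\perp$ and then to recover point-hyperplane incidence: for a line $L=R(P)$ and a hyperplane $H=N(Q)$ one checks that $L\subseteq H$ if and only if some idempotent with range $L$ is orthogonal to some idempotent with kernel $H$. Since $\dim X,\dim Y\ge 3$, incidence determines collinearity of points, and $\Phi$ transports this structure to $Y$.

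Granting the reconstruction, there are two cases according to whether $\Phi$ carries common-range families to common-range families or to common-kernel families; a connectedness argument on $\cI_1(X)$, as in \cite{Mol1} and \cite{Semrl2}, shows that one global alternative must hold. In the first case $\Phi$ induces a collineation $\pi$ from the points of $X$ to the points of $Y$, so by the fundamental theorem of projective geometry there are a ring automorphism $\tau$ of $\IC$ and a $\tau$-linear bijection $T\colon X\to Y$ with $[Tx]=\pi([x])$. One then verifies that the hyperplane map induced by $\Phi$ is carried by the same $T$, whence $\Phi(P)$ and $TPT^{-1}$ share both range and kernel; as a rank one idempotent is determined by these, $\Phi(P)=TPT^{-1}$. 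The second case, via the range-kernel duality above, yields $\Phi(P)=TP^{*}T^{-1}$ with $T\colon X^{*}\to Y$.

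The main obstacle is the reconstruction step together with the consistency argument: extracting the common-range and common-kernel partitions, and hence collinearity, from the two-sided relation $PQ=QP=0$, and ruling out a mixture of the two alternatives across different classes. The remainder is comparatively routine. In the finite dimensional case the conclusion is exactly the output of the fundamental theorem of projective geometry, and when $X$ is infinite dimensional one upgrades $T$ to an invertible bounded operator by an automatic continuity argument; continuity then forces $\tau$ to be the identity or complex conjugation, so that $T$ is linear or conjugate linear (cf. \cite{Mol1}, \cite[Lemma 3]{BH2} and \cite[Theorems 2.3, 2.4]{Semrl2}).
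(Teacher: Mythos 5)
Your proposal is correct and follows essentially the same route as the paper, which offers no self-contained proof but explicitly defers to the orthogonality-preserver argument of \cite{Mol1} (see also \cite{BH2} and \cite{Semrl2}): identify rank one idempotents with non-incident (point, hyperplane) pairs, note that $PQ=QP=0$ encodes the symmetric double-incidence relation, reconstruct the projective structure, and invoke the fundamental theorem of projective geometry, with the range/kernel duality producing the two alternatives and automatic continuity handling the infinite-dimensional refinement. The step you flag as the technical heart (extracting the common-range and common-kernel partitions from the relation and ruling out a mixture of the two alternatives) is precisely the content of the cited arguments, so your sketch matches the intended proof.
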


We are now ready to complete the proof of Theorem \ref{Theorem2.2}.
Recall that  $s > r \geq 0$ and $m=r+s+1\geq2$, and we assume from
now on that $X_2$ has dimension at least $3$.

\smallskip\noindent
\begin{proof}[{Proof of Theorem \ref{Theorem2.2}}]
Recall that $\Phi$ satisfies condition \eqref{eq:2.1}.

\smallskip\noindent
{\bf Claim 1.} \it $\Phi$ is injective, and $\Phi(0)=0$. \rm

It is just Corollary \ref{Claim1}.

\smallskip\noindent
{\bf Claim 2.} \it If $A \in \cA_1$ is a nonzero multiple of a rank
one idempotent, then so is $\Phi(A)$. In particular, if $P \in
\cI_1(X_1)$, then $\Phi(P) = \mu R$ such that $R \in \cI_1(X_2)$ and
$\mu^m = 1$. When $s>r>0$, the map $\Phi$ also sends square zero
rank one operators to square zero rank one operators. \rm

Let $A\not = 0$ be a nonzero multiple of an idempotent, say
$A=\alpha P$, where $0\not=\alpha\in{\mathbb C}$ and $P$ in
${\mathcal A}_1$ is a rank one idempotent operator.  For any $D$ in
${\mathcal A}_2$ of rank at most $3$, there is $C$ in ${\mathcal
A}_1$ such that $\Phi(C)=D$. By equation \eqref{eq:2.1} we have
$$\sigma (D^{r}\Phi(A)D^{s}+D^{s}\Phi(A)D^{r})=\sigma
(C^{r}AC^{s}+C^{s}AC^{r}),$$ which contains $0$ and has at most 2
nonzero elements. Putting $B =  A$ in equation \eqref{eq:2.1},  we
have $\sigma (2\Phi(A)^m)=\sigma (2A^m)\not=\{0\}$. Applying Lemma
\ref{Lemma2.3} or Corollary \ref{Corollary2.5}, depending on $s > r
> 0$, or $s > r=0$, we see that $\Phi(A)$ is a nonzero multiple of
rank one idempotent. Thus $\Phi$ preserves nonzero multiples of rank
one idempotents. If $P$ in $\cA_1$ is a rank one idempotent, then
$\Phi(P) = \mu R$, where $R$ in $\cA_2$ is rank one idempotent and
$\mu \in \IC$. Since $\sigma(2P^m) = \sigma(2\Phi(P)^m)$, we see
that $\mu^m = 1$. The last assertion follows from Lemma
\ref{Lemma2.3} and \eqref{eq:2.1}.

Suppose that $s>r>0$.  In this case, $\Phi$ sends rank one operators
to rank one operators by Claim 2. Observe that if $\Phi(x\otimes
f)=y\otimes g$, by \eqref{eq:2.1} we will have
\begin{align}
\left<\Phi(B)^{r+s}y,g\right> &= \left<B^{r+s}x,f\right> \label{eq:1}\\
\left<y,g\right>^{r+s-1}\left<\Phi(B)y,g\right> &=
\left<x,f\right>^{r+s-1}\left<Bx,f\right>, \quad\forall B\in \cA_1
\label{eq:2}. \intertext{Setting $A=B=x\otimes f$, we also have}
\left<y,g\right>^{r+s+1} &=
\left<x,f\right>^{r+s+1}.\label{eq:fx=gy}
\end{align}
With these three conditions \eqref{eq:1}, \eqref{eq:2} and
\eqref{eq:fx=gy} in hand, we can now utilize the proof of
\cite[Theorem 2.5]{hlw} to arrive at the desired assertions of Theorem
\ref{Theorem2.2}.

\smallskip\noindent
\textbf{Conclusion I.}  \emph{From now on, we know that the case $s
> r > 0$ is done.}

However, since we shall  use some arguments below in the next
section, the case $s>r>0$ is still considered until we reach
Conclusion II in the following.

\smallskip\noindent
{\bf Claim 3.}  \it $\Phi(\alpha A)=\alpha \Phi(A) $ holds for all
$A$ in $\cI_1(X_1)$ and $\alpha$ in ${\mathbb C}$. \rm

\smallskip
Denote $\Phi(A)=C$. Then, for any
 $B\in{\mathcal A}_1$, we have
$$
\begin{array}{rl}
& \sigma(\Phi(B)^r\Phi(\alpha A)\Phi(B)^s + \Phi(B)^s\Phi(\alpha A)\Phi(B)^r)\\
=& \sigma (B^r(\alpha A)B^s+B^s(\alpha A)B^r)\\
=&  \sigma (\alpha\Phi(B)^r\Phi(A)\Phi(B)^s+\alpha\Phi(B)^s\Phi(A)\Phi(B)^r)\\
=& \sigma (\Phi(B)^r (\alpha C)\Phi(B)^s+\Phi(B)^s (\alpha
C)\Phi(B)^r)).
\end{array}
$$
Since $\Phi(\cA_1)$ contains $\cI_1(X_2)$, Lemma \ref{Lemma2.6}
implies $\Phi(\alpha A)= \alpha C = \alpha \Phi(A)$.

\smallskip\noindent
{\bf Claim 4.} \emph{Suppose $\Phi(A)$ is a rank one idempotent.
Then $A^2\neq0$.}

\smallskip
In the case $s>r>0$, it follows from Lemma \ref{Lemma2.3} and
\eqref{eq:2.1} that $A$ has rank $1$.  Then by \eqref{eq:2.1} again,
$A$ could not have  zero trace. Thus $A^2\neq 0$.

Next, we shall see that it is impossible to have $A^2=0$ when $s > r
=0$, either. Assuming $A^2=0$ and noting that $A\neq0$, we would
have a nonzero $x$ in $X_1$ such that $\{x, Ax\}$ is linearly
independent.  Let $B=x\otimes f$ be any rank one idempotent on $X_1$
with $\left<Ax,f\right>=1$, and thus $\lambda\Phi(B)=y\otimes g \in
\cI_1(X_2)$ is a rank one idempotent on $X_2$ with some scalar
$\lambda$ such that $\lambda^{m}$=1. If $AB+BA$ is of rank $1$, then
either $\{x, Ax\}$ is linearly dependent or $\{f, A^*f\}$ is
linearly dependent. However, $A^2=0$ would then establish a
contradiction $x=0$ or $f=0$. On the other hand, as its trace
$2\left<Ax,f\right>=2$, the Jordan product $AB+BA$ has exactly rank
$2$. By Lemma \ref{Lemma2.8}(2), we see that $AB + BA$ cannot have
two distinct nonzero eigenvalues. This forces
\begin{align}\label{eq:A2=0}
\sigma(AB+BA)\cup\{0\}=\{0,1\}=\sigma\left(\Phi(A)\Phi(B) +
\Phi(A)\Phi(B)\right)\cup\{0\}.
\end{align}
As $\Phi(A)$ is a rank one idempotent, Lemma \ref{Lemma2.8}(1)
implies that $\Phi(A)\Phi(B) + \Phi(A)\Phi(B)$ cannot have a nonzero
repeated eigenvalue. Therefore, $\Phi(A)\Phi(B) + \Phi(A)\Phi(B)$
has rank $1$. Consequently, $\{y, \Phi(A)y\}$ or $\{g, \Phi(A)^*g\}$
is linearly dependent. Since $\Phi(A)$ is an idempotent, we have
exactly $y=\Phi(A)y$ or $g=\Phi(A)^*g$. Computing trace in
\eqref{eq:A2=0}, we have the absurd  equality
$1=2\lambda\left<y,g\right>=2\lambda$ with $\lambda^m  = 1$.

\smallskip\noindent
{\bf Claim 5.}  \it Let $\Phi(C)=\Phi(A)+\Phi(B)$. If $rs\not=0$,
then $C=A+B$. If $rs=0$, then together with $A^2\neq 0$, $B^2\neq 0$
and $C^2\neq0$, it implies $C=A+B$. \rm

\smallskip

 Let
$W=\Phi(A)$ and $W^\prime=\Phi(B)$. For any rank one idempotent
$P\in{\mathcal A}_1$,  by Claim 2, $Q=\lambda\Phi(P)$ is a rank one
idempotent for some scalar $\lambda$ with $\lambda^m=1$. It follows
from \eqref{eq:2.1} that
\begin{align*}
\sigma(\lambda(Q^{r}(W+W^\prime)Q^{s}+Q^{s}(W+W^\prime)Q^{r}))
&= \sigma (P^{r}CP^{s}+P^{s}CP^{r}),\\
\sigma (\lambda(Q^{r}WQ^{s}+Q^{s}WQ^{r})) &=\sigma
(P^{r}AP^{s}+P^{s}A P^{r}), \intertext{and} \sigma
(\lambda(Q^{r}W^\prime Q^{s}+Q^{s}W^\prime Q^{r})) &=\sigma
(P^{r}BP^{s}+P^{s}BP^{r}).
\end{align*}

If $rs\not=0$, then the traces of the operators in each side of
above equations are the same. This leads to
$$
{\rm tr} (PCP) ={\rm tr}(\lambda Q(W+W^\prime)Q ) ={\rm
tr}(P(A+B)P)$$ for all rank one idempotents $P$ in ${\mathcal A}_1$.
Hence we  have $C=A+B$ by Lemma \ref{Lemma2.6}.

Assume $rs=0$. Then, for those rank one idempotent operators
$P\in{\mathcal A}_1$ such that every one of $CP+PC$, $AP+PA$ and
$BP+PB$ has two distinct nonzero eigenvalues, applying
\eqref{eq:2.1} and then taking trace, we have
\begin{align}\label{eq:2.3}
{\rm tr} (PC) ={\rm tr}(P(A+B)).
\end{align}
 By
assumption, $A$, $B$ and $C$ are non square-zero. Lemma
\ref{Lemma2.9} ensures that \eqref{eq:2.3} holds for a dense set of
rank one idempotents $P$ in $\cA_1$. As a result, $C=A+B$.

\smallskip\noindent
{\bf Claim 6.}  \it There exists a scalar $\lambda $ with $\lambda
^m=1$ such that $\lambda^{-1}\Phi$ sends rank one idempotents to
rank one idempotents. \rm

\smallskip
Let $f$ be nonzero in $X_1^*$. Assume
$\left<x_1,f\right>=\left<x_2,f\right>=0$, and $\Phi(x_1\otimes f) =
\lambda_1 P_1$, $\Phi(x_2\otimes f) = \lambda_2 P_2$, and
$\Phi((\frac{x_1+x_2}{2})\otimes f) = \lambda_3 P_3$ for some rank
one idempotents $P_1, P_2, P_3$ and scalars $\lambda_1, \lambda_2,
\lambda_3$ with $\lambda_1^m=\lambda_2^m=\lambda_3^m=1$. By Claims
3, 4 and 5, we have
$$
2\lambda_3P_3 = \lambda_1 P_1 + \lambda_2 P_2.
$$
Comparing traces, we have
$$
2\lambda_3 = \lambda_1 + \lambda_2.
$$
Since $\lambda_1^m=\lambda_2^m=\lambda^m=1$, we have
$$
\lambda_1=\lambda_2 = \lambda_3.
$$
Denote this common value by $\lambda_f$. Similarly, for any nonzero
$x$ in $X_1$ we will have an $m$th root $\lambda_x$ of unity
depending only on $x$ such that
$$
\Phi(x\otimes f) = \lambda_x Q_{x\otimes f}
$$
for some rank one idempotent $Q_{x\otimes f}$ whenever $f(x)=1$.

Now consider any two  rank one idempotents $x_1\otimes f_1$ and
$x_2\otimes f_2$ in $\cA_1$. We write $x_1\otimes f_1 \sim
x_2\otimes f_2$ if there is a scalar $\lambda$ with $\lambda^m=1$
such that $\lambda\Phi(x_i\otimes f_i)$ is a rank one idempotent for
$i=1,2$. In   case $\alpha=\left<x_1,f_2\right>\neq 0$, we see that
$$
x_1\otimes f_1 \sim x_1\otimes \frac{f_2}{\alpha} =
\frac{x_1}{\alpha}\otimes f_2 \sim x_2\otimes f_2.
$$
In case $\left<x_1,f_2\right>=\left<x_2,f_1\right>=0$, we also have
$$
x_1\otimes f_1 \sim (x_1+x_2)\otimes f_1 \sim (x_1+x_2)\otimes f_2
\sim x_2\otimes f_2.
$$

\smallskip\noindent
\textbf{Conclusion II.} \it By Claim 6, without loss of generality,
we assume that  $\Phi$ preserves rank one idempotents. By Conclusion
I, it suffices to deal with the case $s>r=0$ in the sequel .\rm

\smallskip\noindent
{\bf Claim 7.}  \it If $\Phi(A)\in\cA_2$ is a rank one idempotent,
then $A\in\cA_1$ is a rank one idempotent. \rm

\smallskip

Suppose $\Phi(A)$ is a rank one idempotent. If $A$ is of rank one,
then Claims 1 and 3 ensure that $A$ is a rank one idempotent. Now we
suppose $A$ has rank at least $2$, and we want to derive a
contradiction. Note that $A^2\neq 0$ by Claim 4.

\smallskip\noindent
\textsc{Case 1.} Suppose there is an $x$ in $X_1$ such that $\{x,
Ax, A^2 x\}$ is linearly independent.  Let $f$ in $X_1^*$ be such
that $\left<x,f\right> =\left<Ax, f\right> =1$, but $\left<A^2x,
f\right>\neq 0$ or $1$.  Lemma \ref{Lemma2.8}(2) ensures that
$A(x\otimes f) + (x\otimes f)A$ has $2$ distinct nonzero
eigenvalues, and so has $\Phi(A)(y\otimes g) + (y\otimes g)\Phi(A)$
by \eqref{eq:2.1}, where $y\otimes g=\Phi(x\otimes f)$ is a rank one
idempotent.  Comparing traces, we have $\left<\Phi(A)y, g\right> =
\left<Ax,f\right>=1$.  This contradicts to Lemma \ref{Lemma2.8}(2),
however.

\smallskip\noindent
\textsc{Case 2.} Suppose $\{x, Ax, A^2x\}$ is  linearly dependent
for all $x$ in $X_1$.  Hence, by Kaplansky's Lemma (\cite{K1954,
A1988}) there are  scalars $a,b,c$, not all zero, such that $aA^2 +
bA + cI =0$.

\smallskip
\textsc{Subcase 2a.} If $A$ has rank $2$ then $A$ has nonzero
eigenvalues $\alpha_1, \alpha_2$ (maybe equal). With respect to a
suitable space decomposition, we can assume
$$
A = \left(
      \begin{array}{ccc}
        \alpha_1 & 0 & 0 \\
        0 & \alpha_2 & 0 \\
        0 & 0 & 0 \\
      \end{array}
    \right)
    \quad\text{or}\quad
A = \left(
      \begin{array}{ccc}
        \alpha_1 & 1 & 0 \\
        0 & \alpha_1 & 0 \\
        0 & 0 & 0 \\
      \end{array}
    \right).
$$
Then
$$
A=\alpha_1 e_1\otimes e_1 + \alpha_2 e_2\otimes e_2
\quad\text{or}\quad A= \alpha_1 e_1\otimes e_1 +
\alpha_1(\frac{e_1}{\alpha_1} +  e_2)\otimes e_2.
$$
By Claims 3 and 5, and Conclusion II, the rank one idempotent
\begin{align*}
\Phi(A)
&=\Phi(\alpha_1 e_1\otimes e_1 + \alpha_2 e_2\otimes e_2)\\
&= \alpha_1 \Phi(e_1\otimes e_1) + \alpha_2 \Phi(e_2\otimes e_2)\\
&= \alpha_1 y_1\otimes g_1 + \alpha_2 y_2\otimes g_2,
\end{align*}
in the first case with rank one idempotents $y_1\otimes g_1 =
\Phi(e_1\otimes e_1)$ and $y_2\otimes g_2 = \Phi(e_2\otimes e_2)$.
Observing ranks, we see that $\{y_1, y_2\}$ or $\{g_1,g_2\}$ is
linearly dependent. On the other hand, as $\left<e_1,
e_2\right>\left<e_2, e_1\right>=0$ we see by \eqref{eq:2.1} that
$\left<y_2,g_1\right>\left<y_1,g_2\right> = 0$. This eventually
gives the contradiction $1=\left<y_1,g_1\right>\left<y_2,g_2\right>
= 0$. The second case is similar.

\smallskip
\textsc{Subcase 2b.} Assume $A$ has rank at least $3$.  Since $A$ is
quadratic, each Jordan block of $A$ has order either $1$ or $2$.
Consider the case
$$
A= \left(
      \begin{array}{cccc}
        \alpha_1 & 1 & 0 & 0\\
        0 & \alpha_1 & 0 &0\\
        0 & 0 & \alpha_2 & 0 \\
        0 & 0 & 0 & * \\
      \end{array}
    \right).
$$
Here the nonzero eigenvalues $\alpha_1, \alpha_2$ of $A$ can be
equal. Then
$$
Ae_1 = \alpha e_1,\quad Ae_2 = e_1 + \alpha_1
e_2\quad\text{and}\quad Ae_3 = \alpha_2 e_3.
$$
Observe
\begin{align*}
A(e_1\otimes e_1) + (e_1\otimes e_1)A &= e_1\otimes (2\alpha_1 e_1+e_2),\\
A(e_2\otimes e_2) + (e_2\otimes e_2)A &= (e_1 + 2\alpha_1
e_2)\otimes e_2, \intertext{and} A(e_3\otimes e_3) + (e_3\otimes
e_3)A &= 2\alpha_2 e_3\otimes e_3.
\end{align*}
Consider the rank one idempotents
 $\Phi(A) = y\otimes g$, and $\Phi(e_i\otimes e_i) = y_i\otimes g_i$ for $i=1,2,3$.
By \eqref{eq:2.1}, we see that
\begin{align*}
\sigma((y\otimes g)(y_i\otimes g_i) + (y_i\otimes g_i)(y\otimes
g))\cup\{0\} = \{0, 2\alpha_1\} \text{ or } \{0, 2\alpha_2\},
\quad\text{for } i=1,2,3.
\end{align*}
  In particular, by Lemma \ref{Lemma2.8}(1),
\begin{align}\label{eq:notzero}
\left<y_i, g\right>\left<y,g_i\right> = \alpha_1 \text{ or }
\alpha_2, \text{ is not zero, for } i=1,2,3.
\end{align}
But as $\left<e_i, e_j\right>\left<e_j,e_i\right>=0$, we have
$$
\left<y_i,g_j\right>\left<y_j,g_i\right>=0 \quad\text{whenever
$i\neq j$}.
$$
On the other hand, Lemma \ref{Lemma2.8}(1) and \eqref{eq:notzero}
force all $(y\otimes g)(y_i\otimes g_i) + (y_i\otimes g_i)(y\otimes
g)$ have rank one. Consequently, $\{y_i, y\}$ or $\{g,g_i\}$ is
linearly dependent for each $i=1,2,3$. Eventually, we might have two
of $y_1,y_2, y_3$ are linearly dependent, or two of $g_1,g_2,g_3$
are linearly dependent.  Suppose $y_1,y_2$ are dependent. Since
$g_1(y_1)=g_2(y_2)=1$, we see  that
$\left<y_1,g_2\right>\left<y_2,g_1\right>=0$, which is absurd. We
shall reach   other contradictions similarly for other possible
situations. Analogously, we can also derive a contradiction when we
are dealing with the case
$$
A= \left(
      \begin{array}{cccc}
        \alpha_1 & 0 & 0 & 0\\
        0 & \alpha_1 & 0 &0\\
        0 & 0 & \alpha_2 & 0 \\
        0 & 0 & 0 & * \\
      \end{array}
    \right)
    \quad\text{or}\quad
A= \left(
      \begin{array}{ccccc}
        \alpha_1 & 1 & 0 & 0& 0\\
        0 & \alpha_1 & 0 & 0& 0\\
        0 & 0 & \alpha_2 & 1 & 0 \\
        0 & 0 & 0 & \alpha_2 & 0 \\
        0 & 0 & 0 & 0 & * \\
      \end{array}
      \right).
$$
This completes the verification of Claim 7.

\smallskip\noindent
{\bf Claim 8.} \it One of the following statements is true.

{\rm (i)} There exists a bounded invertible linear operator $T:
X_1\rightarrow X_2$ such that
$$ \Phi(x\otimes f)=T(x\otimes f)T^{-1} \quad\mbox{\rm for
all } x\in X_1, f\in X_1^*\mbox{ }{\rm with}\mbox{ }\langle
x,f\rangle =1.$$

{\rm (ii)} There exists a bounded invertible linear operator $T:
X_1^*\rightarrow X_2$ such that
$$
\Phi(x\otimes f)=  T(x\otimes f)^*T^{-1} \quad\mbox{\rm for all }
x\in X_1, f\in X_1^*\mbox{ }{\rm with}\mbox{ }\langle x,f\rangle =1.
$$
\rm

Since $\Phi$ preserves rank one idempotents in both directions, by
use of Lemma \ref{Lemma2.10}, it is easily checked that $P, Q \in
\cI_1(X)$ satisfy $PQ=0=QP$ if and only if $\Phi(P)\Phi(Q) = 0 =
\Phi(Q)\Phi(P)$. Thus we can apply Lemma \ref{Lemma2.11} to conclude
that (i) or (ii) holds, but with $T$ a $\tau$-linear for some ring
automorphism $\tau$ of $\IC$.

 Next we prove that $\tau$ is the
identity and hence $T$ is linear.  For any $\alpha \in \IC \setminus
\{1,0\}$, let $A$ and $B$ have operator matrices
$$\left(\begin{array}{cc}
1 & \alpha - 1 \\
0 & 0
\end{array}\right) \oplus 0 \qquad \hbox{ and } \qquad
\left(\begin{array}{cc}
1 & 0 \\
1 & 0
\end{array}\right) \oplus 0.$$
Then $AB+BA$ has two distinct nonzero eigenvalues summing up to
$2\alpha$. Since
$$\begin{array}{rl}
& \sigma(AB+BA) =  \sigma(\Phi(A)\Phi(B) + \Phi(B)\Phi(A)) \\
=&\sigma (T(AB+BA)T^{-1}) = \{\tau (\xi ):\xi\in\sigma (AB+BA)\},
\end{array}$$
we see that
$$2\alpha =
\tr(AB+BA)= \tr(\Phi(A)\Phi(B) + \Phi(B)\Phi(A)) =
\tr(T(AB+BA)T^{-1})=2\tau (\alpha ).$$ Hence $\tau(\alpha) = \alpha$
for any $\alpha \in \IC$. It follows that $T$ is an invertible
bounded linear operator.

\smallskip\noindent
{\bf Claim 9.} \it  $\Phi$ has the form in Theorem \ref{Theorem2.2}.
\rm

Suppose (i) in Claim 8 holds. Let $A\in{\mathcal A}_1$ be arbitrary.
For any $x\in X_1$ and $f\in X_1^*$ with $\langle x,f\rangle =1$,
the condition \eqref{eq:2.1} ensures that
$$\begin{array}{rl}
& \sigma ((T^{-1}\Phi(A)T)(x\otimes f)^{s}
+(x\otimes f)^{s}(T^{-1}\Phi(A)T))\\
=& \sigma (T[T^{-1}\Phi(A)T(x\otimes f)^{s}
+(x\otimes f)^{s}T^{-1}\Phi(A)T]T^{-1})  \\
= &\sigma (A(x\otimes f)^{s}+(x\otimes f)^{s}A).\end{array}$$ Hence,
by Lemma \ref{Lemma2.6}, we have $$\Phi(A)= TAT^{-1}$$ for all $A$
in ${\mathcal A}_1$, that is, $\Phi$ has the form (1) in the
theorem.

Similarly, one can show that $\Phi$ has the form (2) if (ii) of
Claim 8 holds.
\end{proof}

\section{Generalized Jordan product spectrum preserving maps of self-adjoint operators}

Let $H$ be a complex Hilbert space and ${\mathcal S}(H)$ be the real
linear space of all self-adjoint operators in ${\mathcal B}(H)$.
Note that ${\mathcal S}(H)$ is a Jordan algebra. In this section we
solve the problems discussed previously for maps on ${\mathcal
S}(H)$. Our results refine those in \cite{CLS}.

\begin{thm}\label{Theorem3.1}
For $i=1,2$, let $H_i$ be a complex
 Hilbert space,
 and ${\mathcal S}(H_i)$ be the Jordan algebra of all bounded
 self-adjoint operators on
 $H_i$. Consider the product $T_1\circ \cdots \circ T_k$
defined in Definition \ref{Definition2.1}. Suppose
 $\Phi :{\mathcal S}(H_1)\rightarrow{\mathcal S}(H_2)$
satisfies
\begin{align}\label{eq:3.2}
\sigma(\Phi(A_1)\circ \Phi(A_2)\circ\cdots \circ\Phi(A_k))
   =\sigma(A_1\circ A_2\circ\cdots \circ A_k),
\end{align}
whenever any one of the $A_i$'s has rank at most one. Suppose
further that the range of $\phi$ contains all self-adjoint operators
of rank at most $3$. Then there exist a scalar $\xi$ in $\{-1,1\}$
with $\xi^{m}=1$ and a unitary operator $U: H_1\rightarrow H_2$ such
that either
\begin{align*}
\Phi(A) &=\xi UAU^{*} \quad \text{for all $A$ in
$\mathcal{S}(H_1)$}, \intertext{or} \Phi(A) &=\xi UA^tU^{*} \quad
\text{for all $A$ in ${\mathcal S}(H_1)$},
\end{align*}
 where $A^t$ is the transpose of $A$ for
an arbitrarily but fixed orthonormal basis.
\end{thm}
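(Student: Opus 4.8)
The plan is to mirror the proof of Theorem~\ref{Theorem2.2}, replacing each Banach-space tool by its Hilbert-space counterpart and exploiting self-adjointness at every stage. First I would reduce \eqref{eq:3.2} to the special product $B^rAB^s+B^sAB^r$ by taking $A_{i_p}=A$ and $A_{i_q}=B$ for the remaining indices; for self-adjoint $A,B$ this expression is again self-adjoint, so the reduced map is a well-defined map into $\mathcal{S}(H_2)$ satisfying the analogue of \eqref{eq:2.1}. As before, Lemma~\ref{Lemma2.6} yields injectivity and $\Phi(0)=0$, and the rank-one characterizations of Lemma~\ref{Lemma2.3}, Lemma~\ref{Lemma2.4} and Corollary~\ref{Corollary2.5}, which are stated for general Banach spaces, show that $\Phi$ carries a nonzero multiple of a rank one projection to a nonzero multiple of a rank one idempotent. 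The crucial self-adjoint feature enters here: since the image is self-adjoint and of rank one, it must be a \emph{real} multiple $\mu R$ of a rank one \emph{projection} $R$, and the identity $\sigma(2\Phi(P)^m)=\sigma(2P^m)$ forces $\mu\in\IR$ with $\mu^m=1$, hence $\mu\in\{-1,1\}$. This already produces the scalar $\xi$ of the statement.

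The case $\dim H_2=2$ is settled directly by Lemma~\ref{Lemma:CK}, whose hypotheses were deliberately stated to include the Jordan algebra $\mathcal{S}(H)$. For $\dim H_2\ge 3$ I would run the analogues of Claims~3--6 in the proof of Theorem~\ref{Theorem2.2}: homogeneity $\Phi(\alpha A)=\alpha\Phi(A)$ on rank one operators via Lemma~\ref{Lemma2.6}; the additivity statement that $C=A+B$ whenever $\Phi(C)=\Phi(A)+\Phi(B)$ (invoking the density Lemma~\ref{Lemma2.9} of good idempotents when $rs=0$); and the connectedness argument that fixes a single $m$th root of unity $\xi$. Dividing $\Phi$ by this common $\xi$, I may assume $\Phi$ preserves rank one projections. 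The converse, that a preimage of a rank one projection is itself a rank one projection, then follows as in Claim~7; here self-adjointness is a genuine simplification, since a quadratic self-adjoint operator is diagonalizable and the Jordan-block subcases of that claim simply do not arise.

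With $\Phi$ a bijection on the rank one projections of $H_1$ and $H_2$—bijectivity coming from injectivity together with the assumption that the range contains all rank $\le 3$ operators—Lemma~\ref{Lemma2.10} upgrades the spectral hypothesis into orthogonality preservation in both directions: $PQ=QP=0$ if and only if $\Phi(P)\Phi(Q)=\Phi(Q)\Phi(P)=0$. Since rank one projections correspond to rays of $H$ and orthogonality of projections to orthogonality of rays, Uhlhorn's theorem (the orthogonality form of Lemma~\ref{Lemma2.11}) supplies, for $\dim H_2\ge 3$, a unitary or anti-unitary $U:H_1\to H_2$ with $\Phi(P)=UPU^{*}$ for every rank one projection $P$, the anti-unitary alternative being recorded as $P\mapsto UP^{t}U^{*}$ after conjugating by the fixed orthonormal basis. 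Because $U$ is genuinely unitary or anti-unitary, no exotic ring automorphism survives, and the two alternatives are exactly the two forms sought. Finally, Lemma~\ref{Lemma2.6} extends the formula from rank one projections to arbitrary $A\in\mathcal{S}(H_1)$ just as in Claim~9, giving $\Phi(A)=\xi UAU^{*}$ or $\Phi(A)=\xi UA^{t}U^{*}$.

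The main obstacle I anticipate is securing the hypotheses of Uhlhorn's theorem, namely orthogonality preservation in both directions. The additivity Claim~5 and the converse Claim~7 must be re-examined entirely inside $\mathcal{S}(H)$, where one has fewer test operators at one's disposal (only self-adjoint $B$) and must verify that the normalizing factor is a single global constant $\xi\in\{-1,1\}$ rather than a scalar depending on the projection. Once orthogonality is established in both directions and bijectivity on projections is confirmed, the appeal to Uhlhorn's theorem is clean and the transpose case emerges automatically from the anti-unitary alternative; the final extension to all of $\mathcal{S}(H_1)$ via Lemma~\ref{Lemma2.6} is then routine.
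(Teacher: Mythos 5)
There is a genuine gap, and it sits exactly at the step you treat as routine: the claim that ``the rank-one characterizations of Lemma~\ref{Lemma2.3}, Lemma~\ref{Lemma2.4} and Corollary~\ref{Corollary2.5}, which are stated for general Banach spaces, show that $\Phi$ carries a nonzero multiple of a rank one projection to a nonzero multiple of a rank one idempotent.'' Those lemmas characterize rank one via the \emph{existence} of a witness $B$ (of rank at most three) in ${\mathcal B}(X)$ such that $B^rAB^s+B^sAB^r$ has rank three and three distinct nonzero eigenvalues, and the witnesses constructed in their proofs (rotation blocks, strictly triangular matrices, nilpotent companion-type blocks) are not self-adjoint. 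In the setting of Theorem~\ref{Theorem3.1}, the only test operators available to you are self-adjoint: the hypothesis \eqref{eq:3.2s} applies to $B\in{\mathcal S}(H_1)$, and the surjectivity assumption only puts self-adjoint operators of rank at most $3$ in the range of $\Phi$. So to conclude that a self-adjoint $\Phi(A)$ of rank at least two can be ``caught,'' you need a characterization of rank one \emph{inside} ${\mathcal S}(H)$ with self-adjoint witnesses $B$. This is precisely why the paper proves Lemma~\ref{Lemma3.4}, a new and nontrivial lemma whose entire content is the construction of self-adjoint $B$'s (e.g., the block $B_1$ with $B_1^k=2^{k-1}\bigl[2^k\bigl(\begin{smallmatrix}1&1\\ 1&1\end{smallmatrix}\bigr)+\bigl(\begin{smallmatrix}1&-1\\ -1&1\end{smallmatrix}\bigr)\bigr]$, and the Case 1/Case 2 analysis for rank $\ge 3$). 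You do flag ``fewer test operators (only self-adjoint $B$)'' as a concern, but you attach it to Claims 5 and 7, while your proof outline leaves the rank-one preservation step resting on lemmas that do not apply; this is the missing idea, not a detail.

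Your endgame also diverges from the paper's, in a way that costs you extra unproved steps. You route through two-sided orthogonality preservation and Uhlhorn's theorem (via Lemmas~\ref{Lemma2.10} and~\ref{Lemma2.11}), which requires establishing that $\Phi$ is a \emph{bijection} on rank one projections, i.e., the full converse Claim~7 in the self-adjoint category (note also that Lemma~\ref{Lemma2.10}'s proof uses non-self-adjoint idempotents $R$, though in ${\mathcal S}(H)$ this can be bypassed since a self-adjoint operator with spectrum $\{0\}$ is zero). The paper avoids all of this: from \eqref{eq:3.2s} and Lemma~\ref{Lemma2.8} it extracts the trace identity $\tr(\Phi(x\otimes x)\Phi(x'\otimes x'))=\tr((x\otimes x)(x'\otimes x'))$, hence $|\langle y_x,y_{x'}\rangle|=|\langle x,x'\rangle|$ for all $x,x'$, and applies \emph{Wigner's} theorem to get a linear or conjugate-linear isometry $U$ with $y_x=\xi(x)Ux$; surjectivity of the range then upgrades $U$ to unitary or conjugate-unitary, and Lemma~\ref{Lemma2.6}-type trace comparison extends $\Phi(A)=\xi UAU^*$ (or $\xi VA^tV^*$) to all of ${\mathcal S}(H_1)$. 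Preservation of transition probabilities is strictly more information than orthogonality, so no converse rank-one claim is needed. If you repair the rank-one step with a self-adjoint analogue of Lemma~\ref{Lemma2.3} (i.e., re-prove Lemma~\ref{Lemma3.4}) and switch from Uhlhorn to Wigner, your outline becomes the paper's proof.
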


To prove Theorem \ref{Theorem3.1}, it is important to characterize
rank one operators in terms of the general Jordan products of
self-adjoint operators. We have the following lemma.

\begin{lem}\label{Lemma3.4}
Suppose $s > r \geq0$ is a pair of nonnegative integers. Let $H$ be
a Hilbert space of dimension at least three, and let
$0\not=A\in{\mathcal S}(H)$. Then the following statements are
equivalent.
\begin{enumerate}[(a)]
    \item $A$ has rank one.

    \item For any $B\in{\mathcal S}(H)$, $\sigma(B^rAB^s+B^sAB^r)$
contains $0$ and at most two nonzero elements.

    \item There does not exist $B\in{\mathcal S}(H)$ of rank at
most three such that $B^rAB^s+B^sAB^r$ has rank at most three and
$\sigma(B^rAB^s+B^sAB^r)$ contains three distinct nonzero elements.
\end{enumerate}
\end{lem}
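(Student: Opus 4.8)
The plan is to prove the equivalence of (a), (b), (c) for self-adjoint operators by reducing to the Banach space lemmas already established, namely \lemref{Lemma2.3} and \lemref{Lemma2.4} together with \thmref{Corollary2.5}, but being careful that the perturbing operator $B$ must itself be self-adjoint. The implications (a)$\Rightarrow$(b)$\Rightarrow$(c) are immediate: if $A$ has rank one then $B^rAB^s+B^sAB^r$ has rank at most two and hence at most two nonzero eigenvalues, and (b)$\Rightarrow$(c) is just the contrapositive bookkeeping. So the real content is (c)$\Rightarrow$(a), which I will again prove by contraposition: assuming $A$ is self-adjoint of rank at least two, I must produce a \emph{self-adjoint} $B$ of rank at most three such that $B^rAB^s+B^sAB^r$ has rank three with three distinct nonzero eigenvalues.

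First I would use the spectral theorem to put $A$ in a convenient form. Since $A$ is self-adjoint of rank at least two, it has at least two nonzero real eigenvalues (counting multiplicity), and on the span of two corresponding eigenvectors together with a third orthonormal vector I can write $A = \mathrm{diag}(a_1,a_2,0)\oplus 0$ relative to an orthonormal basis, where $a_1,a_2$ are nonzero reals. This is the self-adjoint analogue of the rank-two normal forms (i)--(iv) in \lemref{Lemma2.3}, but it is simpler: self-adjointness rules out the nilpotent Jordan blocks, so only the diagonal case survives. I would then treat the cases $s>r>0$ and $s>r=0$ in parallel, since the statement covers $r=0$.

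Next I would construct $B$ explicitly and self-adjoint. The natural choice is a self-adjoint operator of the form
$$
B = \begin{pmatrix} \cos\theta & -\sin\theta & 0\\ \sin\theta & \cos\theta & 0\\ 0 & 0 & d\end{pmatrix}\oplus 0
$$
made self-adjoint by instead taking a genuinely Hermitian rotation-type matrix, or more directly a real diagonal $B=\mathrm{diag}(b_1,b_2,b_3)\oplus 0$ with distinct positive entries. With $A=\mathrm{diag}(a_1,a_2,0)$ diagonal, however, the diagonal $B$ gives $B^rAB^s+B^sAB^r = \mathrm{diag}(2a_1 b_1^{r+s},\,2a_2 b_2^{r+s},\,0)$, which has rank only two. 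To force rank three and three distinct nonzero eigenvalues I must break the diagonal structure, so I would instead choose $B$ to have a small off-diagonal $2\times 2$ block coupling a nonzero eigendirection of $A$ to the third coordinate, exactly as in the constructions of \lemref{Lemma2.3} and \lemref{Lemma2.4}, but symmetrized to remain in $\cS(H)$. Concretely, I would pick $B$ self-adjoint so that $B^s$ has a prescribed real symmetric $3\times 3$ form with distinct eigenvalues, mirror the computation that produces
$$
\begin{pmatrix} 2a\cos s\theta & -a\sin s\theta & *\\ a\sin s\theta & 0 & *\\ 0 & 0 & 2cd^{s}\end{pmatrix},
$$
and then replace it by its Hermitian counterpart whose characteristic polynomial still factors into three distinct nonzero roots by choosing the free parameters $\theta$ and $d$ as in those lemmas.

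The main obstacle I anticipate is precisely the self-adjointness constraint on $B$: the earlier lemmas freely used non-normal $B$ (rotations combined with shears, Jordan-type $B^s$), whereas here every auxiliary operator must be Hermitian, so $B^rAB^s+B^sAB^r$ is automatically self-adjoint and its eigenvalues are forced to be real. I must verify that I can still achieve three \emph{distinct} nonzero real eigenvalues within this smaller class. Since $A$ is diagonal real and $B$ is to be Hermitian, the product $B^rAB^s+B^sAB^r$ is Hermitian, and the freedom in choosing the entries and eigenvalues of $B$ (three real parameters $b_1,b_2,b_3$ plus the off-diagonal coupling) is generically enough: I would argue that the map from these parameters to the triple of eigenvalues of $B^rAB^s+B^sAB^r$ is nonconstant, so a generic choice avoids the measure-zero set where eigenvalues collide or vanish. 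Finally, when $A$ has rank at least three I would run the same diagonalization argument on a $3\times 3$ leading block with three nonzero eigenvalues and take $B$ diagonal with distinct positive entries scaled so that $2a_i b_i^{r+s}$ are three distinct nonzero numbers, which immediately yields rank three with distinct nonzero spectrum, completing the contrapositive and hence the lemma.
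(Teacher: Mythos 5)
Your reduction setup and the easy implications are fine, and your rank-two normal form $\diag(a_1,a_2,0)\oplus 0$ is legitimate because a \emph{finite-rank} self-adjoint operator is orthogonally diagonalizable. But your treatment of the case ${\rm rank}(A)\geq 3$ rests on a false premise: you propose to ``run the same diagonalization argument on a $3\times 3$ leading block with three nonzero eigenvalues,'' i.e., you assume $A$ has three orthonormal eigenvectors with nonzero eigenvalues. A self-adjoint operator of infinite rank need not have \emph{any} eigenvalues (multiplication by $t$ on $L^2[0,1]$), so no such eigenbasis exists in general. The paper avoids this: for $rs\neq 0$ it compresses $A$ to the six-dimensional space $[x_1,x_2,x_3,Ax_1,Ax_2,Ax_3]$ and uses \cite[Lemma 2.3]{hlw} to produce an orthonormal basis in which the leading $3\times 3$ \emph{compression} of $A$ is $\diag(a_1,a_2,a_3)$ with $a_i\neq 0$ (these are not eigenvalues of $A$), after which diagonal $B$ works because the sandwich $B^r(\cdot)B^s$ with $r\geq 1$ projects everything onto the $3$-space. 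For $s>r=0$ even that fails, since $AB^s$ has range $A(V)\not\subseteq V$, so the product need not be supported on $V$ nor have rank at most $3$; the paper then needs the whole scalar/non-scalar dichotomy with $Ax_1=a_1x_1+a_2x_2$, the tridiagonal block $C_1$ with $\det(C_1)=-2a_1b_3^2\neq 0$, and the invariant-subspace subcases. None of this is captured by your sketch.

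In the rank-two case your genericity argument is, as stated, a non sequitur: nonconstancy of the parameters-to-eigenvalues map does not make the discriminant or determinant not-identically-zero on the Hermitian parameter space (consider $t\mapsto (t,t,0)$, nonconstant but always degenerate). To conclude the ``bad set'' is negligible you must exhibit one Hermitian witness with three distinct nonzero eigenvalues --- which is exactly the assertion being proved --- and ``symmetrizing'' the constructions of Lemma \ref{Lemma2.3} or \ref{Lemma2.4} is not an operation you define: the rotation/shear matrices there, and the resulting $3\times 3$ products such as $\left(\begin{smallmatrix} 2a\cos s\t & -a\sin s\t\\ a\sin s\t & 0\end{smallmatrix}\right)$, are not Hermitian. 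The paper instead gives explicit self-adjoint witnesses: for $rs\neq 0$, $B=[d]\oplus B_1\oplus 0$ with $B_1=2\left(\begin{smallmatrix}1&1\\1&1\end{smallmatrix}\right)+\left(\begin{smallmatrix}1&-1\\-1&1\end{smallmatrix}\right)$, whose powers are computable in closed form and for which $B_1^rE_{11}B_1^s+B_1^sE_{11}B_1^r$ has determinant $-4^{r+s-1}(2^r-2^s)^2<0$, hence two nonzero eigenvalues of opposite sign, the third eigenvalue $2ad^{r+s}$ coming from the $[d]$ block; for $r=0$, a rank-one projection block $\left(\begin{smallmatrix}1/2&1/2\\1/2&1/2\end{smallmatrix}\right)$ yields the symmetric block $\left(\begin{smallmatrix}a_2&a_2/2\\a_2/2&0\end{smallmatrix}\right)$ with eigenvalues $a_2(1/2\pm 1/\sqrt{2})$. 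Your genericity idea is in fact repairable --- the Hermitian $3\times 3$ matrices are a real form of $M_3(\IC)$, so the polynomial in the entries of $B$ given by the discriminant times the determinant of the compressed product, which is nonzero somewhere on $M_3(\IC)$ by the witnesses of Lemmas \ref{Lemma2.3} and \ref{Lemma2.4}, cannot vanish identically on the Hermitian matrices --- but you would have to make that argument explicitly, and it still does not close the rank $\geq 3$ gap above.
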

\begin{proof}
The implications (a) $\Rightarrow$ (b) $\Rightarrow$ (c) are clear.
To prove (c) $\Rightarrow$ (a), we consider the contrapositive.
Suppose (a) does not hold. Assume $rs \ne 0$. If $A$ has rank at
least 3, then there are vectors $x_1, x_2, x_3$ such that $\{Ax_1,
Ax_2, Ax_3\}$ is linearly independent.  Extend an orthonormal basis
for $[x_1, x_2, x_3, Ax_1, Ax_2, Ax_3]$ to an orthonormal basis for
$H$. Then the operator matrix of $A$ with respect to this basis has
the form
$$\left(\begin{array}{cc}
A_{11} & A_{12} \\
A_{12}^* & A_{22}
\end{array}\right),$$
where $A_{11}=A_{11}^*$ is the compression of $A$ on the subspace
$[x_1, x_2, x_3, Ax_1, Ax_2, Ax_3]$. By \cite[Lemma 2.3]{hlw}, we
can choose an orthonormal basis for $[x_1, x_2, x_3, Ax_1, Ax_2,
Ax_3]$ so that the leading $3\times 3$ matrix of $A_{11}$ equals
$\diag(a_1, a_2, a_3)$ for some nonzero scalars $a_1, a_2, a_3$. Now
construct $B$ so that the operator matrix of $B$ using the same
basis as that of $A$ equals $\diag(1, b_2, b_3) \oplus 0\oplus 0$ so
that $a_1, a_2b_2^{r+s}, a_3b_3^{r+s}$ are distinct nonzero numbers.
Then $B^rAB^s+B^sAB^r$  has rank $3$ with three distinct nonzero
eigenvalues.

Next, suppose $A$ has rank 2. Choosing a suitable basis, we may
assume that $A$ has operator matrix $\diag(a, b,0) \oplus 0$.
Construct $B$ with operator matrix $[d] \oplus B_1 \oplus 0$, where
$$B_1 =
\left(\begin{array}{cc}1&1\\ 1&-1\end{array}\right)
\left(\begin{array}{cc}2&0\\0 &1\end{array}\right)
\left(\begin{array}{cc}1&1\\
1&-1\end{array}\right) =2\left(\begin{array}{cc}1&1\cr
1&1\end{array}\right) + \left(\begin{array}{cc}1&-1\cr -1
&1\end{array}\right).
$$
Compute
$$
B_1^k =2^{k-1}\left[ 2^k\left(\begin{array}{cc}1&1\\
1&1\end{array}\right) + \left(\begin{array}{cc}1&-1\\ -1
&1\end{array}\right)\right], \qquad k = 1, 2, \dots.$$ Now, if
$\gamma = 2^r$ and $\delta = 2^s$ then
$$
B_1^r\left(\begin{array}{cc}1 & 0 \\ 0 & 0
\end{array}\right)B_1^s + B_1^s\left(\begin{array}{cc}1 & 0 \\ 0 & 0
\end{array}\right)B_1^r = 2^{r+s-1}
\left(\begin{array}{cc}(\gamma+1)(\delta+1) & \gamma\delta -1  \\
\gamma\delta-1 & (\gamma-1)(\delta-1)\end{array}\right)$$ has
determinant $-4^{r+s-1}(\gamma-\delta)^2 < 0$. So, it has a positive
and a negative eigenvalue, say, $\mu$ and $\nu$. Thus, we can
choose $d$ so that $B^rAB^s + B^sAB^r$ has three nonzero distinct
nonzero eigenvalues: $2ad^{r+s}, b\mu, b\nu$.

Next, suppose $s>r=0$. If $A$ has rank $2$, then $A$ has an operator
matrix of the form $\diag(a_1, a_2,0) \oplus 0$ for some nonzero
real numbers $a_1, a_2$. Let $b > 0$ be such that $2b^sa_1 \ne
a_2(1/2 \pm 1/\sqrt{2})$. Suppose $B \in \cS(H)$ is such that $B$
and $AB^s + B^sA$ have operator matrices
$$\left(\begin{array}{ccc}
b & 0 & 0  \\
0 & 1/2 & 1/2 \\
0 & 1/2 & 1/2
\end{array}\right) \oplus 0 \qquad \hbox{ and } \qquad
\left(\begin{array}{ccc}
2a_1b^s & 0 & 0  \\
0 & a_2 & a_2/2 \\
0 & a_2/2 & 0
\end{array}\right) \oplus 0.$$
Then $AB^s + B^sA$  has rank $3$ with three distinct nonzero
eigenvalues $2b^sa_1, a_2(1/2 + 1/\sqrt{2})$ and $a_2(1/2 -
1/\sqrt{2})$.

Now, suppose $A$ has rank at least 3. If $A = \lambda I$, then let
$B$ have operator matrix $\diag(1,2,3) \oplus 0$ with respect to
some orthonormal basis for $H$. Then $B$ has rank $3$ and $AB^s +
B^sA$ has rank $3$ with three distinct nonzero eigenvalues $\lambda,
2^{s}\lambda, 3^{s}\lambda$. So, assume $A$ is non-scalar. Thus,
there is a unit vector $x_1 \in H$ such that $Ax_1 = a_1 x_1 + a_2
x_2$ with $a_1\not=0$ and $a_2 > 0$, where $x_2$ is a unit vector in
$[x_1]^\perp$. Let $A x_2 = b_1 x_1 + b_2 x_2 + b_3 x_3$ with $b_3
\ge 0$, where $x_3$ is a unit vector in $[x_1, x_2]^\perp$. We
consider two cases.

\textsc{Case 1.} If $b_3 > 0$, then the operator matrix of the
self-adjoint operator $A$ with respect to an orthonormal basis with
$\{x_1, x_2, x_3\}$ as the first three vectors has the form 
$$\left(\begin{array}{cccc}
a_{1} & a_{2} & 0 & 0 \\
a_{2} & b_{2} & b_3  & 0 \\
0     & b_{3} & * & * \\
0     & 0     & * & *
\end{array}\right).$$
Let $B$ have operator matrix $I_2 \oplus 0$. Then $AB^s + B^sA$ has
an operator matrix of the form $C_1 \oplus 0$, where
$$
C_1 = \left(\begin{array}{ccc} 2a_1 & 2a_2 & 0 \\ 2a_{2} & 2b_2 &
b_3  \\ 0 & b_3 & 0
\end{array}\right).$$
Note that $\det(C_1) = -2a_1b_3^2 \ne 0$, and $C_1 - \lambda I$ has
rank at least two for any eigenvalue $\lambda$ as the $2\times 2$
submatrix at the right top corner is always invertible.  So, $C_1$
is invertible  and has three distinct nonzero eigenvalues. Hence,
$AB^s+B^sA$ has rank $3$ with three  distinct nonzero eigenvalues.

\textsc{Case 2.} Suppose $b_3 = 0$. Then $[x_1, x_2]$ is an
invariant subspace of $A$. Since $A$ has rank at least 3, there is a
unit vector $x_3$ in $H$ such that $Ax_3\not=0$ and $Ax_3\in\{x_1,
x_2\}^\perp$.

\textsc{Subcase 2a.} If $[x_1, x_2, x_3]$ is an invariant subspace
of $A$, then with respect to an orthonormal basis for $[x_1, x_2,
x_3]$ and its orthonormal complement, $A$ has operator matrix $A_1
\oplus A_2$, where $A_1$ in $M_3$ has rank at least $2$. If $A_1$
has rank $3$, we may assume that $A_1 = \diag(a_1, a_2, a_3)$. We
can choose $B$ with operator matrix $\diag(b_1, b_2, b_3) \oplus 0$
for some suitable $b_1, b_2, b_3$ so that $AB^s +B^sA$ has rank $3$
with three distinct nonzero eigenvalues $2a_1 b_1^s, 2a_2 b_2^s,
2a_3 b_3^s$. If $A_1$ has rank 2, we may assume that $A_1 =
\diag(a_1, a_2,0)$  and continue exactly as when $A$
has rank $2$. Then choose $B$ with operator matrix
$$\left(\begin{array}{ccc}
b & 0 & 0  \\
0 & 1/2 & 1/2 \\
0 & 1/2 & 1/2
\end{array}\right) \oplus 0$$
so that $2b^sa_1 \ne a_2(1/2 \pm 1/\sqrt{2})$. Then $AB^s + B^sA$
has rank $3$ with three distinct nonzero eigenvalues  $2b^sa_1 \ne
a_2(1/2 \pm 1/\sqrt{2})$.

\textsc{Subcase 2b.} Suppose $Ax_3 = c_3 x_3 + c_4 x_4$ so that $c_4
> 0$ and  $\{x_1, x_2, x_3, x_4\}$ is an orthonormal set in $H$.
If $Ax_4 = d_3 x_3 + d_4 x_4 + d_5 x_5$ so that $\{x_3, x_4, x_5\}$
is an orthonormal set in $H$ and $d_5 > 0$, then we are back to Case
1 with $(x_1, x_2)$ replaced by $(x_3,x_4)$. We thus assume that
$[x_1, x_2, x_3, x_4]$ is an invariant subspace of $A$. With respect
to an orthonormal basis for $[x_1, x_2, x_3, x_4]$ and its
orthonormal complement, $A$ has operator matrix $A_3 \oplus A_4$,
where $A_3 \in M_4$ is self-adjoint and has rank at least 2. We may
assume that $A_3$ is in diagonal form with at least two nonzero
diagonal entries. Using a similar argument as in Subcase 2A, we get
the desired conclusion.
\end{proof}

\smallskip\noindent
\begin{proof}[{Proof of Theorem \ref{Theorem3.1}}]
Assume that $\Phi$ satisfies \eqref{eq:3.2s}. Let
$$
r = \min\{p-1,m-p\} \quad \hbox{ and } \quad s = \max\{p-1,m-p\}.
$$
In particular, $r+s = m-1$.
It suffices to prove a
special case of Theorem \ref{Theorem3.1}, as that Theorem
\ref{Theorem2.2} to Theorem \ref{Theorem2} in last section.
More precisely, we assume the condition
\begin{align}\label{eq:3.2s}
\sigma(\Phi(B)^r \Phi(A)\Phi(B)^s + \Phi(B)^s\Phi(A)\Phi(B)^r) =
 \sigma(B^rAB^s + B^sAB^r)
 \end{align}
 holds whenever $A$ or $B$ in ${\mathcal S}(H_1)$ has rank at most one.
The case
$s=r$ has been done in \cite{hlw}. Hence, we assume $s>r\geq 0$.
Arguing similarly as in the beginning of the proof of Theorem
\ref{Theorem2.2}, we can verify the case $\dim H_2 \leq 2$.
Therefore, we assume the dimension of the Hilbert space $H_2$ is at
least three in the sequel.

\smallskip\noindent
{\bf Claim 1.} \it $\Phi$ is injective, and $\Phi(0)=0$. \rm

This works out similarly as in Corollary \ref{Claim1}.

\smallskip\noindent
{\bf Claim 2.} \it $\Phi$ sends rank one self-adjoint operators to
rank one self-adjoint operators.

\smallskip \rm
This follows from \eqref{eq:3.2s} and Lemma \ref{Lemma3.4}. Indeed,
every rank one self-adjoint operator has the form $\pm\, x\otimes
x$. So, $\Phi(x\otimes x)=\lambda_xy_x\otimes y_x$ for some
$\lambda_x\in\{-1,1\}$ and $y_x\in H_2$. Since
$$\{2\|x\|^{2m},0\}=\sigma (2(x\otimes x)^m)=\sigma(2\Phi(x\otimes x)^m)
=\{2\lambda_x^m\|y_x\|^{2m},0\},$$ we see that $\lambda_x$ is an
$m$th root of the unity and $\|y_x\|=\|x\|$.

\smallskip\noindent
{\bf Claim 3.} \it $\Phi$ is real homogeneous; and if $\Phi(C) =
\Phi(A) +\Phi(B)$ then $C=A+B$.
 Moreover, there is a fixed $\lambda$, being either $+1$ or
$-1$, such that for every $x$ in $H_1$ we have $\Phi(x\otimes
x)=\lambda y_x\otimes y_x$ with $\|y_x\|=\|x\|$. \rm

The assertions follow from arguments similar to, and a bit easier
than, that in Claims 3, 5 and 6 in the proof of Theorem
\ref{Theorem2.2} in last section.

\smallskip\noindent
{\bf Claim 4.} \it $\Phi$ has the form stated in the theorem. \rm

Let $x,x'$ be two nonzero vectors in $H_1$, and $x\otimes x$ and
$x'\otimes x'$ be the associated rank one self-adjoint operators,
respectively. By \eqref{eq:3.2s}, and Lemma \ref{Lemma2.8} when
$s>r=0$, we see that
$$
\tr (\Phi(x\otimes x)\Phi(x'\otimes x')) = \tr ((x\otimes
x)(x'\otimes x')),
$$
or
$$
\left<\lambda_{x}y_x,\lambda_{x'}y_{x'}\right> = \left<x,x'\right>.
$$
This gives
$$
|\left<y_x,y_{x'}\right>| = |\left<x,x'\right>|, \quad  \text{for
all nonzero } x,x'\in H_1.
$$
If follows from the Wigner's Theorem \cite{Gyory} that there exist a
modular one function $\xi: H_1 \to \mathbb C$  and a  linear or
conjugate linear isometry $U: H_1\rightarrow H_2$ such that
$$
y_x = \xi(x)Ux, \quad\forall x \in H_1.
$$
By Claim 3, we see that all $\xi(x)$ equal a constant
$\xi\in\{-1,+1\}$, and
$$
\Phi(x\otimes x)=\xi Ux\otimes Ux \quad\text{for all rank one
projection $x\otimes x$ on $H_1$}.
$$
Moreover, \eqref{eq:3.2s} ensures that  $\xi^m=1$. Because the range
of $\Phi$ contains all rank one self-adjoint operators, by
\eqref{eq:3.2s} we can see that $U$ has dense range, and thus $U$ is
a unitary or a conjugate unitary operator.

In general, for any $A$ in ${\mathcal S}(H_1)$, let $A_{i_p}=A$ and
$A_{i_q}=x\otimes x$ with $\|x\|=1$ if $q\not = p$, and substitute
them into \eqref{eq:3.2s}. Since both $A$ and $\Phi(A)$ are
self-adjoint, we see that
$$
\begin{array}{rl} &\sigma (\xi^{m-1}
((x\otimes x)^rU^*\Phi(A)U(x\otimes x)^s+(x\otimes
x)^sU^*\Phi(A)U(x\otimes x)^r))\\=&\sigma ((x\otimes x)^rA(x\otimes
x)^s+(x\otimes x)^sA(x\otimes x)^r).
\end{array}
$$
By Lemma \ref{Lemma2.8} and comparing traces,  we get $\Phi(A)=\xi
UAU^*$ for all $A$ in ${\mathcal S}(H_1)$. If $U$ is a conjugate
unitary, take an orthonormal basis $\{ e_j\}$ of $H_1$ and define a
conjugate unitary $J:H_1\rightarrow H_1$ by $J:\sum_j\xi_je_j\mapsto
\sum_j\bar{\xi}_je_j$ and let $V=UJ$. Then $V$ is unitary and
$JA^*J=A^{t}$. Thus, $\Phi(A)=VA^tV^*$ for all $A$ in ${\mathcal
S}(H_1)$.
\end{proof}

 \end{document}